\numberwithin{equation}{section}
\newtheorem{theorem}{Theorem}
\newtheorem{lemma}{Lemma}[section]
\newtheorem{corollary}[lemma]{Corollary}
\newtheorem{proposition}[lemma]{Proposition}
\newtheorem{algorithm}[lemma]{Algorithm}
\theoremstyle{definition}
\newtheorem{definition}[lemma]{Definition}
\theoremstyle{remark}
\newtheorem*{remark}{Remark}
\newtheorem*{remarks}{Remarks}
\DeclareMathOperator{\tr}{tr}
\newcommand{\id}{\mathbbm{1}}
\newcommand{\C}{\mathbb{C}}
\newcommand{\Zzeta}{\mathbb{Z}[\zeta]}
\newcommand{\Zp}{\mathbb{Z}_p}
\title{Generalised Burnside and Dixon algorithms for irreducible projective representations}
\author{Attila Szabó}
\address{Physik-Institut, Universität Zürich, Winterthurerstr.\ 190, 8057 Zürich, Switzerland}
\thanks{A.\,Sz. was supported by Ambizione grant No. 215979 by the Swiss National Science Foundation.}
\email{attila.szabo@physik.uzh.ch}
\date{11 May 2025}
\begin{document}

\begin{abstract}
    Based on the character theory of projective representations of finite groups proposed in Ref.~\cite{Cheng2015CharacterTheory}, we generalise several algorithms for computing character tables and matrices of irreducible linear representations to projective representations. In particular, we present an algorithm based on that of Burnside to compute the characters of all irreducible projective representations of a finite group with a given Schur multiplier, and transpose it to exact integer arithmetic following Dixon's character table algorithm. We also describe an algorithm based on that of Dixon to split a projective representation into irreducible subspaces in floating-point arithmetic, and discuss how it can be used to compute matrices for all projective irreps with a given multiplier. Our algorithms bypass the construction of the representation group of the Schur multiplier, which makes them especially attractive for floating-point computations, where exact values of the multiplier are not necessarily available.
\end{abstract}

\maketitle

\section{Introduction}

Throughout this paper, $G$ is a finite group with identity element $1$.

The standard way to study projective representations of a finite group $G$ proceeds by constructing a larger representation group $G^*$ such that its linear representations correspond to the projective representations of $G$. 
An alternative to this approach is the character theory introduced by Ref.~\cite{Cheng2015CharacterTheory}, which bypasses $G^*$. It is largely analogous to standard character theory and retains most of its attractive features: characters uniquely specify projective representations up to isomorphism, they are essentially class functions, and characters of irreducible representations are an orthonormal basis of the space of such class functions.

The goal of the present paper is to demonstrate how this character theory can be used to algorithmically compute characters and representation matrices of projective irreducible representations without resorting to the representation group $G^*$. In particular, we generalise Burnside's~\cite[\S223]{Burnside1911} and Dixon's~\cite{Dixon1967} algorithms for computing character tables and Dixon's algorithm for computing matrices of irreducible representations~\cite{Dixon1970}.

The main results of the paper are the following three theorems:

\begin{theorem}[Burnside's algorithm for character tables of projective irreps]
    \label{thm: burnside}
    Let $\alpha$ be a unitary Schur multiplier (\Cref{def: multiplier}) on $G$.
    Let $C^{(1)},\dots,C^{(m)}$ be the $\alpha$-regular conjugacy classes of $G$ (cf.~\Cref{thm: regular classes}) and fix one representative element $c^{(i)}_0\in C^{(i)}$ for all $1\le i\le m$. Define the $m\times m$ matrices $M_A$ for each $\alpha$-regular class $A$ as
    \begin{equation}
        (M_A)_{BC} = \frac1{\sqrt{|B||C|}} \sum_{\substack{a\in A \\ b\in B\\ab\in C}}\frac{\beta(c_0,ab)}{\beta(a_0,a)\beta(b_0,b)}\alpha(a,b),
        \label{eq: burnside M matrix}
    \end{equation}
    where $a_0,b_0,c_0$ are the chosen representatives of the $\alpha$-regular classes $A,B,C$ and 
    \begin{equation*}
        \beta(g,g'=hgh^{-1}) := \frac{\alpha(h,h^{-1})}{\alpha(h,gh^{-1})\alpha(g,h^{-1})} = \frac{\alpha(h,h^{-1})}{\alpha(h,g)\alpha(hg,h^{-1})}
    \end{equation*}
    for all $\alpha$-regular conjugate $g,g'\in G$ (cf.~\Cref{thm: beta uniqueness}).
    \begin{enumerate}
        \item Let $\chi$ be the character of an irreducible projective representation of degree $d$ of $G$ with multiplier $\alpha$ (cf.~\Cref{def: irrep}). 
        Then the eigenvalue equation
        \begin{equation}
            M_A \left(\begin{array}{c}
                  \chi\big(c^{(1)}_0\big)\sqrt{|C^{(1)}|/|G|}\\
                 \vdots\\
                  \chi\big(c^{(m)}_0\big)\sqrt{|C^{(m)}|/|G|}\\
            \end{array}\right) 
            = 
            \frac{|A|\chi(a_0)}d \left(\begin{array}{c}
                  \chi\big(c^{(1)}_0\big)\sqrt{|C^{(1)}|/|G|}\\
                 \vdots\\
                  \chi\big(c^{(m)}_0\big)\sqrt{|C^{(m)}|/|G|}\\
            \end{array}\right) 
            \label{eq: burnside eigenvalue equation}
        \end{equation}
        holds for each $\alpha$-regular class $A$, with representative element $a_0$.
        In other words, the vector on the right hand side is a joint eigenvector of all the matrices $M_A$.
        \item The matrices $M_A$ are normal and they commute. Their $m$ joint eigenvectors are uniquely defined up to a scalar factor and correspond to the distinct projective irrep characters with multiplier $\alpha$ as per~\eqref{eq: burnside eigenvalue equation}.
    \end{enumerate}
\end{theorem}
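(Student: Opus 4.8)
The plan is to realise the entire statement inside the $\alpha$-twisted group algebra $\C^\alpha[G]$, the algebra with basis $\{e_g\}_{g\in G}$ and product $e_g e_h = \alpha(g,h)\,e_{gh}$, on which any projective representation $\rho$ with multiplier $\alpha$ acts as a genuine algebra homomorphism via $e_g\mapsto\rho(g)$. For each $\alpha$-regular class $A$ I would introduce the $\beta$-weighted class sum $z_A = \sum_{a\in A}\beta(a_0,a)^{-1}e_a$. Using the conjugation identity $e_h e_a e_h^{-1} = \beta(a,hah^{-1})^{-1}e_{hah^{-1}}$ (a direct consequence of the definition of $\beta$) together with the fact that $\beta(a_0,\,\cdot\,)$ is well defined on $A$ (\Cref{thm: beta uniqueness}), a short calculation shows that each $z_A$ is central; \Cref{thm: regular classes} guarantees that the $m$ elements $z_A$ form a basis of the centre $Z$ of $\C^\alpha[G]$, since the centrality constraint forces the coefficients to vanish on non-$\alpha$-regular classes.

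For part (1) I would first apply Schur's lemma: on an irreducible $\rho$ the central element $z_A$ acts as a scalar $\omega_A\id$, and taking the trace while using $\chi(hgh^{-1}) = \beta(g,hgh^{-1})\chi(g)$ collapses the sum to $\tr\rho(z_A) = |A|\chi(a_0)$, so $\omega_A = |A|\chi(a_0)/d$ — precisely the eigenvalue on the right of \eqref{eq: burnside eigenvalue equation}. Next I would expand the product $z_A z_B = \sum_C N_{AB}^C z_C$ in $Z$; comparing the coefficient of each $e_c$ identifies the structure constants $N_{AB}^C$, and summing that identity over the $|C|$ elements of $C$ yields $(M_A)_{BC} = \sqrt{|C|/|B|}\,N_{AB}^C$, which is exactly the content of the $\sqrt{|B||C|}$ normalisation and of the summation over all $ab\in C$ in \eqref{eq: burnside M matrix}. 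Applying $\rho$ to $z_A z_B = \sum_C N_{AB}^C z_C$ gives $\omega_A\omega_B = \sum_C N_{AB}^C\omega_C$; substituting $\omega_A = |A|\chi(a_0)/d$ and rearranging, the class-size factors reorganise into the stated weights $\chi(c_0)\sqrt{|C|/|G|}$ and reproduce \eqref{eq: burnside eigenvalue equation}. This step is routine once the normalisation bookkeeping is set up correctly.

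For part (2) I would equip $\C^\alpha[G]$ with the inner product making $\{e_g\}$ orthonormal. Because $\alpha$ is unitary, $|\beta|=1$, so $\langle z_A,z_B\rangle = \delta_{AB}\,|A|$ and $\{z_A/\sqrt{|A|}\}$ is an orthonormal basis of $Z$; in this basis the matrix of left multiplication $L_A\colon x\mapsto z_A x$ equals $M_A^{\mathsf T}$. Commutativity and associativity of $Z$ make the $L_A$ commute, and since transposition preserves a commuting family, so do the $M_A$. For normality I would use the $*$-structure $e_g^* = \alpha(g,g^{-1})^{-1}e_{g^{-1}}$, under which $\C^\alpha[G]$ is a finite-dimensional $C^*$-algebra; the centre $Z$ is closed under $*$ (indeed $z_A^*$ is a scalar multiple of $z_{A^{-1}}$, with $A^{-1}$ again $\alpha$-regular), so the adjoint $L_A^\dagger = L_{z_A^*}$ is again left multiplication by a central element and therefore commutes with $L_A$. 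Hence each $L_A$, and with it each $M_A$, is normal.

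Finally, a family of commuting normal matrices is simultaneously unitarily diagonalisable, so the $M_A$ share an orthonormal eigenbasis. Each common eigenvector spans a maximal ideal of $Z\cong\C^m$, i.e.\ corresponds to one of the $m$ simple components of the semisimple algebra $\C^\alpha[G]$ and hence to a single projective irrep, the eigenvalues being its central character $A\mapsto\omega_A=|A|\chi(a_0)/d$. To conclude that the eigenvectors are unique up to scale I must show the joint eigenspaces are one-dimensional, equivalently that distinct irreps have distinct central characters; this is where I expect the main difficulty, as it relies on the orthonormality of irreducible projective characters in the Cheng theory (so that no two irreps yield proportional, hence equal-after-normalisation, characters) together with the matching count of $m$ $\alpha$-regular classes, irreps, and $\dim Z$ joint eigenvectors furnished by \Cref{thm: regular classes}.
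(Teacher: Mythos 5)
Your proposal is correct, and it takes a genuinely different, more structural route than the paper. The paper never introduces the twisted group algebra: it works inside a fixed irrep, using its corollary of Schur's lemma (\Cref{thm: schur shuffled}) to show that $F(g)=\frac1{|G|}\sum_{h\in G}\pi(hgh^{-1})/\beta_h(g) = \chi(g)\id_V/d$ (\Cref{thm: conjugacy class sum}), and then obtains \eqref{eq: burnside eigenvalue equation} by expanding $|A||B|F(a_0)F(b_0)$ directly, via the class-factor product rule (\Cref{thm: product rule}) and \Cref{thm: character basics}(3); since $|A|F(a_0)$ is exactly your $\rho(z_A)$, this is your computation $\rho(z_A)\rho(z_B)=\rho(z_Az_B)$ carried out with representation matrices, so part (1) is essentially parallel in the two arguments. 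The genuine divergence is in part (2): the paper deduces commutativity and normality \emph{a posteriori} -- Cheng's orthonormality (\Cref{thm: orthonormality}) together with the count $m$ of \Cref{thm: characters equals classes} shows the $m$ character vectors form a complete orthonormal joint eigenbasis, and a matrix admitting an orthonormal eigenbasis is normal -- whereas you get commutativity \emph{a priori} from commutativity of the centre $Z$ and normality from the $*$-structure $e_g^*=\alpha(g,g^{-1})^{-1}e_{g^{-1}}$ (for which you should record the one-line check that $L_x^\dagger=L_{x^*}$ with respect to your inner product; it holds precisely because $\alpha$ is unitary). Your route buys conceptual clarity -- the joint eigenvectors are identified as the primitive central idempotents and the eigenvalues as central characters, and diagonalisability is decoupled from character theory -- and it is in fact closer to Burnside's original class-algebra argument for linear representations; the paper's route is more elementary, avoiding any algebra machinery, and leans on Cheng's orthonormality theorem, which both proofs need anyway for the uniqueness claim: as you correctly flag, distinctness of the central characters $A\mapsto|A|\chi_i(a_0)/d_i$ of inequivalent irreps follows from orthogonality of their characters.

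Two small repairs to your write-up. First, a common eigenvector spans a \emph{minimal} ideal of $Z\cong\C^m$ (its associated algebra homomorphism $Z\to\C$ has a maximal ideal as kernel), not a maximal one. Second, the fact that the $z_A$ span the centre is not given by \Cref{thm: regular classes} (which only says that $\alpha$-regularity is a class property); it follows from the direct computation that centrality of $\sum_g c_g e_g$ forces $c_{hgh^{-1}}=\beta_h(g)^{-1}c_g$, so that for non-$\alpha$-regular $g$ one may pick $h\in C_G(g)$ with $\beta_h(g)\neq1$ and conclude $c_g=0$.
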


\begin{theorem}[Dixon's algorithm for character tables of projective irreps]
\label{thm: dixon character table}
    Let $\alpha$ be a unitary multiplier on $G$. 
    Let $e$ be the exponent of $\alpha$ (\Cref{def: exponent of alpha}) and let $\zeta$ be a  primitive $e$th root of unity in $\C$.
    Let $p>2\sqrt{|G|}$ be a prime such that $e$ divides $p-1$ and let $z\in \Zp$ be such that $z^e=1$ and $z^n\neq1$ for all $0<n<e$.\footnote{Such $p$ and $z$ exist due to Dirichlet's theorem and the existence of a primitive element of multiplicative order $p-1$ in $\Zp^\times$.}
    Let 
    \begin{equation}
        \theta:\Zzeta\to\Zp,\quad f(\zeta)\mapsto f(z)\bmod p\ \text{for all polynomials $f(x)\in \mathbb{Z}[x]$.}
        \label{eq: embedding in Zp}
    \end{equation}
    Then
    \begin{enumerate}
        \item $\theta$ is a well-defined ring homomorphism from $\Zzeta$ to $\Zp$.
        \item Let $(\pi,V,\alpha)$ be a projective representation of $G$. Then, for all $g\in G$, the eigenvalues of $\pi(g)$ are $e$th roots of unity. In particular, the characters $\chi(g)$ of $\pi$ belong to $\Zzeta$.
        \item Using the notation of \Cref{thm: burnside}, define the $m\times m$ matrices $\tilde M_A$ for each $\alpha$-regular class $A$ as
        \begin{equation}
            (\tilde M_A)_{BC}= \sum_{\substack{a\in A \\ab_0\in C}}\frac{\theta(\beta(c_0,ab_0))}{\theta(\beta(a_0,a))}\theta(\alpha(a,b_0)).
            \label{eq: dixon M matrix}
        \end{equation}
        Let $\chi$ be the character of an irreducible projective representation of degree $d$ of $G$ with multiplier $\alpha$. Then the eigenvalue equation
        \begin{equation}
            \tilde M_A \left(\begin{array}{c}
                  \theta\big(\chi\big(c^{(1)}_0\big)\big)\\
                 \vdots\\
                  \theta\big(\chi\big(c^{(m)}_0\big)\big)\\
            \end{array}\right) 
            = 
            |A|\theta(\chi(a_0)) d^{-1} \left(\begin{array}{c}
                  \theta\big(\chi\big(c^{(1)}_0\big)\big)\\
                 \vdots\\
                  \theta\big(\chi\big(c^{(m)}_0\big)\big)\\
            \end{array}\right) 
            \label{eq: dixon eigenvalue equation}
        \end{equation}
        holds in $\Zp$ for each $\alpha$-regular class $A$.
        In other words, the vector on the right hand side is a joint eigenvector of all the matrices $\tilde M_A$.
        \item The matrices $\tilde M_A$ are jointly diagonalisable in $\Zp$. Their $m$  joint eigenvectors are uniquely defined up to a scalar factor and correspond to the distinct projective irrep characters with multiplier $\alpha$ as per~\eqref{eq: dixon eigenvalue equation}.
        \item Let $\chi$ be the character of a projective representation $(\pi,V,\alpha)$. Let $m_k(g)\ge 0$ be the multiplicity of the eigenvalue $\zeta^k$ in $\pi(g)$ for all $g\in G$ and $0\le k< e$.\footnote{By (1), all eigenvalues of all the $\pi(g)$ are $\zeta^k$ for some $0\le k<e$.} Then
        \begin{enumerate}
            \item $\chi(g^n) = \big[\prod_{i=1}^{n-1} \alpha(g^i,g)\big]\big[\sum_{k=0}^e m_k(g) \zeta^{kn}\big]$ for all $g\in G$ and $n\ge 0$.
            \item For all $g\in G$ and $0\le k< e$,
            \begin{equation}
                m_k(g) = e^{-1}\sum_{n=0}^{e-1} z^{-nk}  \theta(\chi(g^n)) \prod_{i=1}^{n-1}\theta(\alpha(g^i,g))^{-1} \pmod p.
                \label{eq: dixon fourier}
            \end{equation}
            In particular, if $\pi$ is irreducible, $m_k$ is the smallest such nonnegative integer.
        \end{enumerate}
    \end{enumerate}
\end{theorem}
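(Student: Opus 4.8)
The plan is to obtain parts (1)--(3) by essentially formal arguments, to concentrate the real work on part (4), and to read off part (5) from the identities established along the way.

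For \emph{part (1)} I would write $\theta$ as the composite $\Zzeta\cong\mathbb{Z}[x]/(\Phi_e)\to\Zp$, where $\Phi_e$ is the $e$th cyclotomic polynomial and the second arrow is evaluation at $z$ followed by reduction mod $p$. Evaluation $f\mapsto f(z)\bmod p$ is a ring homomorphism $\mathbb{Z}[x]\to\Zp$, and it annihilates $(\Phi_e)$ because the conditions $z^e=1$ and $z^n\neq1$ for $0<n<e$ say precisely that $z$ is a primitive $e$th root of unity in $\Zp$, hence a root of $\Phi_e\bmod p$ (it is a root of $x^e-1=\prod_{d\mid e}\Phi_d$ but of no $\Phi_d$ with $d<e$); monicity of $\Phi_e$ gives $\Zzeta\cong\mathbb{Z}[x]/(\Phi_e)$. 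For \emph{part (2)} I would restrict $\pi$ to $\langle g\rangle$ and iterate the cocycle relation to get $\pi(g^n)=\big[\prod_{i=1}^{n-1}\alpha(g^i,g)\big]\pi(g)^n$ by induction; this is already (5a), since the eigenvalues of $\pi(g)^n$ are the $n$th powers of those of $\pi(g)$. At $n=|g|$ this exhibits $\pi(g)^{|g|}$ as a root of unity times $\id$, so every eigenvalue of the unitarisable operator $\pi(g)$ is a root of unity; that its order divides $e$ is the content of \Cref{def: exponent of alpha}. Hence $\chi(g)=\tr\pi(g)=\sum_k m_k(g)\zeta^k\in\Zzeta$.

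For \emph{part (3)} I would deduce the eigenvalue equation from \Cref{thm: burnside}(1) by rescaling. With $S=\mathrm{diag}_C\!\big(\sqrt{|C|/|G|}\big)$, the eigenvector of \eqref{eq: burnside eigenvalue equation} is $Sv$ with $v_C=\chi(c_0)$, so that equation reads $(S^{-1}M_AS)v=\tfrac{|A|\chi(a_0)}{d}v$. A short computation gives $(S^{-1}M_AS)_{BC}=\tfrac1{|B|}\sum_{a\in A,\,b\in B,\,ab\in C}\tfrac{\beta(c_0,ab)}{\beta(a_0,a)\beta(b_0,b)}\alpha(a,b)$; the summand is invariant under simultaneously conjugating $a$ and $b$ (the conjugation law for $\beta$ of \Cref{thm: beta uniqueness}, which is exactly what makes the twisted class sums well defined), so fixing $b=b_0$ and using $\beta(b_0,b_0)=1$ collapses the $b$-sum against the $1/|B|$, leaving precisely the $\theta$-preimage of $(\tilde M_A)_{BC}$ in \eqref{eq: dixon M matrix}. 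These entries lie in $\Zzeta$ as finite sums of roots of unity, as does $|A|\chi(a_0)$; since $d\le\sqrt{|G|}<p$ is invertible in $\Zp$, applying $\theta$ entrywise yields \eqref{eq: dixon eigenvalue equation}.

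\emph{Part (4)} is where the work lies. Because entrywise $\theta$ respects matrix products, $\tilde M_A=\theta(S^{-1}M_AS)$ transports the commutativity of the $M_A$ (\Cref{thm: burnside}(2)) to the $\tilde M_A$, and the $m$ vectors $u^{(j)}$ with $u^{(j)}_C=\theta(\chi_j(c_0))$ are common eigenvectors by part (3). Two non-degeneracy facts then complete the proof, and verifying that reduction mod $p$ preserves both is the main obstacle. \textbf{(i)} The $u^{(j)}$ are $\Zp$-independent: the matrix $V=(\chi_j(c_0))_{C,j}$ satisfies $\det V\cdot\overline{\det V}=\prod_C|Z_G(c_0)|$ by column orthogonality of the normalised projective character table, a positive integer all of whose prime divisors divide $|G|$. \textbf{(ii)} The $m$ joint eigenvalue tuples stay pairwise distinct mod $p$, forcing every joint eigenspace to be one-dimensional and the eigenvectors unique up to scale.

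The key that unlocks both is that $p\nmid|G|$: one checks $\exp(G)\mid e$ from \Cref{def: exponent of alpha}, so were $p\mid|G|$ Cauchy's theorem would give an element of order $p$ and hence $p\mid\exp(G)\mid e\mid p-1$, absurd. Coprimality of $p$ to $\prod_C|Z_G(c_0)|$ then gives $\theta(\det V)\neq0$ (as $\Zp$ is a field), settling (i). For (ii), $p\nmid|G|$ makes the twisted group algebra $\Zp^{\alpha}G$ semisimple by Maschke, and $e\mid p-1$ makes it split, so its centre is $\cong\Zp^{\,m}$; the $\tilde M_A$ are the regular representation of this centre in the class-sum basis, whose joint spectrum is the $m$ distinct coordinate characters, i.e.\ the $m$ reductions of the complex central characters. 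The sharper bound $p>2\sqrt{|G|}$ is not needed here (only $p>\sqrt{|G|}$ and $p\nmid|G|$); its role is in recovering exact character values, i.e.\ in part (5).

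Finally, \emph{part (5)}: (a) was obtained in part (2), and applying $\theta$ gives $\theta(\chi(g^n))\prod_{i=1}^{n-1}\theta(\alpha(g^i,g))^{-1}=\sum_{k=0}^{e-1}m_k(g)z^{kn}$ in $\Zp$. As $z$ is a primitive $e$th root of unity and $e$ is invertible mod $p$, the orthogonality $\sum_{n=0}^{e-1}z^{(k'-k)n}=e\,\delta_{k'k}$ inverts this discrete Fourier transform to \eqref{eq: dixon fourier}; and since $0\le m_k(g)\le\dim V$, with $\dim V=d\le\sqrt{|G|}<p$ for irreducible $\pi$, the value $m_k(g)$ is the unique, hence smallest, nonnegative integer in its residue class.
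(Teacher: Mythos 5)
Your parts (1), (2) and (5) follow the paper's own route almost verbatim (its \Cref{thm: Zzeta to Zp}, \Cref{def: order of g in alpha}/\Cref{def: exponent of alpha}, and \Cref{thm: fourier}), and your observation that the theorem itself only needs $p>\sqrt{|G|}$ is correct — though note the factor $2$ is consumed by the eigenvector-normalisation step (\Cref{alg: normalisation mod p}), which pins down $d$ from $\mathcal{N}d^2=|G| \pmod p$, not by part (5) as you suggest. In part (3), your diagonal rescaling by $S$ does reproduce the paper's matrix $M'_A$ of \Cref{thm: burnside modified}, but the collapse of the $b$-sum is \emph{not} just \Cref{thm: beta uniqueness}: you need the compatibility of $\alpha$ with simultaneous conjugation, $\alpha(hah^{-1},hbh^{-1})\,\beta_h(ab)=\alpha(a,b)\,\beta_h(a)\,\beta_h(b)$, which requires \Cref{thm: product rule} combined with~\eqref{eq: rep conjugation} (e.g.\ apply $\pi(hxh^{-1})=\beta_h(x)\pi(h)\pi(x)\pi(h)^{-1}$ to the $\alpha$-regular representation and compare scalars); the paper sidesteps stating this identity by re-running the computation of $|A||B|F(a_0)F(b_0)$ with the $b$-sum replaced by a sum over $h\in G$ with $b=hb_0h^{-1}$. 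Your use of $d\le\sqrt{|G|}<p$ to invert $d$ is a fine alternative to the paper's multiplication by $|G|$ and $d\mid|G|$ (\Cref{thm: irrep dim divides order}).

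The one genuine gap is part (4)(ii). Your step (i) — the Gram-determinant argument with $\det V\,\overline{\det V}=\prod_C|C_G(c_0)|$ prime to $p$ — is a clean equivalent of the paper's reduction of the orthogonality relations,~\eqref{eq: orthonormality mod p}. But for (ii) you appeal to semisimplicity \emph{and splitness} of the twisted group algebra $\Zp^{\alpha}G$, to its centre being $\Zp^m$ with $m$ the number of $\alpha$-regular classes in characteristic $p$, and to the mod-$p$ central characters being the reductions of the complex ones. Maschke is unproblematic, but the splitness given only $e\mid p-1$ and the class-counting in characteristic $p$ are nontrivial facts usually proved via the representation group $G^*$ — exactly the machinery this paper is designed to avoid — and you assert rather than prove them. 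The detour is also unnecessary: distinctness of the eigenvalue tuples follows from your own step (i), since the tuple attached to $u^{(j)}$ is $\big(|A|\,\theta(\chi_j(a_0))\,d_j^{-1}\big)_A$, obtained from the independent vectors $\theta(\chi_j)$ by the invertible diagonal scaling $\mathrm{diag}(|A|)$ (each $|A|$ divides $|G|$, hence is a unit mod $p$) and nonzero scalars $d_j^{-1}$; these tuples are therefore independent, in particular pairwise distinct, which is precisely how the paper concludes. One shared cosmetic point: iterating $\pi(x)\pi(y)=\alpha(x,y)\pi(xy)$ gives $\pi(g)^n=\big[\prod_{i=1}^{n-1}\alpha(g^i,g)\big]\pi(g^n)$ (consistent with \Cref{thm: unitary reps are unitary}), so the product in (5a) should sit on the other side and the inverse in~\eqref{eq: dixon fourier} disappears; you mirror the paper's convention consistently, so nothing in the argument is affected.
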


\begin{theorem}[Dixon's algorithm for decomposing projective representations into irreps]
    \label{thm: dixon irrep matrix}
    Let $(\pi, V, \alpha)$ be a projective representation of $G$. For any linear map $f:V\to V$, define
    \begin{equation*}
        \langle f\rangle_\pi := \frac1{|G|}\sum_{g\in G} \pi(g)^{-1} f \pi(g).
    \end{equation*}
    \begin{enumerate}
        \item The eigenspaces of $\langle f\rangle_\pi$ are projective representations of $G$ with multiplier $\alpha$.
        \item Let $\{f_i\}_{i=1}^{(\dim V)^2}$ be a basis of the vector space of linear maps $V\to V$. Then $\pi$ is irreducible if and only if $\langle f_i\rangle_\pi$ is a multiple of $\id_V$ for all $1\le i\le (\dim V)^2$.
    \end{enumerate}
\end{theorem}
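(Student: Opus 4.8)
\section*{Proof proposal}

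The plan is to reduce everything to the single observation that the averaging operator $\langle f\rangle_\pi$ commutes with every $\pi(h)$, exactly as in the linear case, so that projective Schur-type arguments apply verbatim. First I would compute $\pi(h)^{-1}\langle f\rangle_\pi\pi(h)$ directly. Inserting the defining relation $\pi(g)\pi(h)=\alpha(g,h)\pi(gh)$ and its inverse $\pi(h)^{-1}\pi(g)^{-1}=\alpha(g,h)^{-1}\pi(gh)^{-1}$ into each summand, the scalar factors $\alpha(g,h)^{-1}$ and $\alpha(g,h)$ cancel, because $\alpha$ is $\C$-valued, leaving $\pi(gh)^{-1}f\pi(gh)$. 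Re-indexing the sum by $g'=gh$, which is a bijection of $G$, then yields $\pi(h)^{-1}\langle f\rangle_\pi\pi(h)=\langle f\rangle_\pi$; that is, $\langle f\rangle_\pi$ lies in the commutant $\mathrm{End}_G(V)=\{T:T\pi(g)=\pi(g)T \text{ for all } g\}$.

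For part (1), I would use this commuting property: if $v$ is an eigenvector of $\langle f\rangle_\pi$ with eigenvalue $\lambda$, then $\langle f\rangle_\pi(\pi(h)v)=\pi(h)\langle f\rangle_\pi v=\lambda\,\pi(h)v$, so each eigenspace is $\pi$-invariant. The restriction of $\pi$ to an eigenspace still satisfies $\pi(g)\pi(h)=\alpha(g,h)\pi(gh)$, hence is a projective representation with the same multiplier $\alpha$.

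For part (2), I would identify $\langle\cdot\rangle_\pi$ as the projection onto the commutant. The previous paragraph gives $\mathrm{image}\subseteq\mathrm{End}_G(V)$; conversely, if $T\in\mathrm{End}_G(V)$ then every summand of $\langle T\rangle_\pi$ equals $T$, so $\langle T\rangle_\pi=T$, giving $\mathrm{image}=\mathrm{End}_G(V)$. Since $\{f_i\}$ is a basis of all linear maps $V\to V$, the family $\{\langle f_i\rangle_\pi\}$ spans $\mathrm{End}_G(V)$; thus all $\langle f_i\rangle_\pi$ are scalar if and only if $\mathrm{End}_G(V)=\C\,\id_V$. The forward direction of the claim is then Schur's lemma for projective irreps: over $\C$ any $T\in\mathrm{End}_G(V)$ has an eigenvalue $\lambda$, its eigenspace is a nonzero invariant subspace by the argument of part (1), and irreducibility forces it to equal $V$, so $T=\lambda\,\id_V$. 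For the converse I would argue the contrapositive: if $\pi$ is reducible, complete reducibility yields a decomposition $V=W\oplus W'$ into proper nonzero invariant subspaces, and the projection onto $W$ along $W'$ lies in $\mathrm{End}_G(V)$ but is not a scalar, so $\mathrm{End}_G(V)\supsetneq\C\,\id_V$ and some $\langle f_i\rangle_\pi$ is non-scalar.

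The only place where the projective structure could have caused trouble is the conjugation computation, and the whole argument hinges on the cancellation of $\alpha(g,h)$ there; once that is in hand, the proof is essentially the classical Dixon argument. The one auxiliary fact I need is complete reducibility of projective representations, which follows from the unitarity of $\alpha$: averaging an inner product via $\langle u,v\rangle':=\sum_{g\in G}\langle\pi(g)u,\pi(g)v\rangle$ makes each $\pi(h)$ unitary, since $|\alpha(g,h)|=1$, whence the orthogonal complement of any invariant subspace is invariant.
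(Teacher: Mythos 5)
Your proposal is correct and follows essentially the same route as the paper: the conjugation computation showing that $\langle f\rangle_\pi$ commutes with every $\pi(h)$ (with the cancellation of the scalar $\alpha(g,h)$), eigenspace invariance for part (1), and for part (2) Schur's lemma in the forward direction together with a projection onto an invariant direct summand in the converse. The only differences are organisational: you prove Schur's lemma inline and phrase part (2) via the commutant $\mathrm{End}_G(V)$ being exactly the image of $\langle\cdot\rangle_\pi$, whereas the paper cites Schur's lemma (Cheng, Corollary 2.13) and isolates the equivalence as a separate lemma before deducing the statement about the basis $\{f_i\}$ by linearity.

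One caveat: your closing justification of complete reducibility assumes $|\alpha(g,h)|=1$, but the theorem places no unitarity hypothesis on $\alpha$, so the averaging of an inner product does not apply as stated. This is harmless here, because complete reducibility for arbitrary multipliers is a citable fact (the paper quotes it from Cheng, Theorem 2.3, and its own converse argument relies on it in the same implicit way yours does), or one can first pass to an equivalent representation with a unitary multiplier. Alternatively, you can avoid complete reducibility altogether: if $W$ is a nontrivial proper invariant subspace and $f$ is any linear projection with image $W$, then $\langle f\rangle_\pi$ restricts to the identity on $W$ and has image contained in $W$ (since $\pi(g)^{-1}W=W$ for all $g$), so $\langle f\rangle_\pi$ is itself a projection onto $W$ and hence not a multiple of $\id_V$.
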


The rest of the paper is organised as follows. 
In \Cref{sec: preliminaries}, we define Schur multipliers, projective representations and their characters, and $\alpha$-class functions, and review those results of  Ref.~\cite{Cheng2015CharacterTheory} that we make use of in the rest of the paper.

In \Cref{sec: burnside}, we prove \Cref{thm: burnside}, which gives an immediate prescription for computing the character table of projective irreps. This is especially convenient in floating-point arithmetic, as it bypasses constructing the representation group $G^*$ based on approximate values of the multiplier.

In \Cref{sec: dixon character table}, we review the method used in Ref.~\cite{Dixon1967} to transpose Burnside's algorithm to a finite field and use it to prove \Cref{thm: dixon character table}. We also discuss some technical points of turning \Cref{thm: dixon character table} into a practical algorithm to compute the character table, see in particular \Cref{alg: normalisation mod p}. Nevertheless, the goal of this paper is to illustrate how the character theory of Ref.~\cite{Cheng2015CharacterTheory} can be used to bypass the representation group $G^*$ in algorithms, rather than to provide a highly optimised implementation. Accordingly, we do not discuss the technical improvements to Dixon's original algorithm in Ref.~\cite{Schneider1990}, or newer algorithms such as~\cite{Unger2006}.

Finally, in \Cref{sec: dixon irrep matrix}, we prove \Cref{thm: dixon irrep matrix} and discuss strategies to use it to compute matrices for all projective irreps with a given multiplier.

\subsection*{Acknowledgements}

I thank David Craven, Olivér Janzer, and Stacey Law for their careful reading of and feedback on the manuscript.

\section{Review of the character theory of projective representations}
\label{sec: preliminaries}

\subsection{(Schur) multipliers} 
\begin{definition}
    \label{def: multiplier}
    A function $\alpha:G\times G\to \C^\times$ is called a \emph{(Schur) multiplier} (or a \textit{factor set, factor system,} or \emph{2-cocycle}) on $G$ if
    \begin{enumerate}
        \item $\alpha(x,y)\alpha(xy,z) = \alpha(x,yz)\alpha(y,z)$ for all $x,y,z\in G$
        \item $\alpha(x,1)=\alpha(1,x)=1$ for all $x\in G$.
    \end{enumerate}
    We say that a multiplier is \emph{unitary} if $\alpha(x,y)$ is a root of unity for all $x,y\in G$, i.e., there exists a positive integer $N$ such that $\alpha(x,y)^N=1$ for all $x,y\in G$. We call the smallest such $N$ the \textit{order of the unitary multiplier $\alpha$.}
\end{definition}

\begin{lemma}[\cite{Cheng2015CharacterTheory}, \S1]
    \begin{enumerate}
        \item Multipliers on $G$ form an abelian group with pointwise multiplication as the group operation. We denote this group by $Z^2(G,\C)$.
        \item Let $\mu:G\to\C^\times$ be an arbitrary function with $\mu(1)=1$. Then $\alpha(x,y) = \mu(xy)/[\mu(x)\mu(y)]$ is a multiplier on $G$.
        \item The set of all multipliers of the above form is a normal subgroup of $Z^2(G,\C)$. We label this group by $B^2(G,\C)$ and the quotient group (known as the \emph{2-cohomology group}) by $H^2(G,\C)$. We denote the image of a multiplier $\alpha$ in $H^2(G,\C)$ by $[\alpha]$.
    \end{enumerate}
\end{lemma}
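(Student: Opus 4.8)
The plan is to observe first that the set of all functions $G\times G\to\C^\times$ forms an abelian group under pointwise multiplication, inheriting this structure directly from $\C^\times$, and then to realise both $Z^2(G,\C)$ and $B^2(G,\C)$ as subgroups of it. For part (1), I would verify that the subset of functions satisfying conditions (1) and (2) of \Cref{def: multiplier} is closed under pointwise multiplication and under pointwise inversion. Both defining conditions are purely multiplicative identities in $\C^\times$: given multipliers $\alpha,\alpha'$, the cocycle identity for the pointwise product $\alpha\alpha'$ follows by multiplying the two cocycle identities and using commutativity of $\C^\times$ to regroup the four factors on each side, while the normalisation $(\alpha\alpha')(x,1)=\alpha(x,1)\alpha'(x,1)=1$ is immediate. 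Taking reciprocals of both identities shows that $\alpha^{-1}$ is again a multiplier. Since the constant function $1$ is trivially a multiplier and serves as the identity element, $Z^2(G,\C)$ is a subgroup of an abelian group and is therefore itself an abelian group.

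For part (2), I would substitute $\alpha(x,y)=\mu(xy)/[\mu(x)\mu(y)]$ directly into the two conditions. After cancelling the shared $\mu(xy)$ and $\mu(yz)$ factors, both sides of the cocycle identity reduce to $\mu(xyz)/[\mu(x)\mu(y)\mu(z)]$, so it holds unconditionally. The normalisation $\alpha(x,1)=\mu(x)/[\mu(x)\mu(1)]=1$ and $\alpha(1,x)=\mu(x)/[\mu(1)\mu(x)]=1$ uses precisely the hypothesis $\mu(1)=1$.

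For part (3), the cleanest route is to consider the coboundary map $\delta$ sending each $\mu$ in the group $C$ of functions $G\to\C^\times$ with $\mu(1)=1$ (under pointwise multiplication) to the multiplier $\alpha_\mu(x,y)=\mu(xy)/[\mu(x)\mu(y)]$. By part (2) this lands in $Z^2(G,\C)$, and I would check that $\delta$ is a group homomorphism, i.e.\ that $\alpha_{\mu\nu}=\alpha_\mu\alpha_\nu$, which again follows by pointwise rearrangement. Then $B^2(G,\C)=\operatorname{im}\delta$ is a subgroup as the image of a homomorphism; since $Z^2(G,\C)$ is abelian by part (1), every subgroup is automatically normal, and the quotient $H^2(G,\C)=Z^2(G,\C)/B^2(G,\C)$ is a well-defined group.

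I do not anticipate any genuine obstacle: the entire lemma is routine verification that the defining relations survive the pointwise operations, with all the commutativity supplied freely by $\C^\times$. The only points that require care are the reliance on $\mu(1)=1$ to obtain the normalisation in part (2), and the recognition in part (3) that the homomorphism picture renders normality automatic, so that no direct conjugation argument is needed.
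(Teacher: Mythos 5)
Your proof is correct; note that the paper itself states this lemma without proof, citing \cite{Cheng2015CharacterTheory}, \S 1, so there is no in-paper argument to compare against. Your route---realising $Z^2(G,\C)$ as a subgroup of the abelian group of all functions $G\times G\to\C^\times$ under pointwise multiplication, exhibiting $B^2(G,\C)$ as the image of the coboundary homomorphism $\mu\mapsto\alpha_\mu$ on the group of functions $\mu:G\to\C^\times$ with $\mu(1)=1$, and observing that normality is automatic since $Z^2(G,\C)$ is abelian---is the standard verification, and it correctly isolates the only two points of substance: the reliance on $\mu(1)=1$ for the normalisation condition in part (2), and the homomorphism identity $\alpha_{\mu\nu}=\alpha_\mu\alpha_\nu$ in part (3).
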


\begin{lemma}[\cite{Cheng2015CharacterTheory}, Lemma 3.1]
    \label{thm: unitary multipliers}
    Let $\alpha$ be a multiplier on $G$. Then there exists a unitary multiplier $\alpha'$ such that $[\alpha]=[\alpha']$.
\end{lemma}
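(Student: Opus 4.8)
The plan is to produce the desired unitary representative $\alpha'$ as $\alpha$ times a coboundary, exploiting the finiteness of $G$. The crucial fact I would establish first is that $\alpha^{|G|}$ is itself a coboundary; once this is known, taking $|G|$-th roots will pin every value of the corrected multiplier to a $|G|$-th root of unity.

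To show $\alpha^{|G|}\in B^2(G,\C)$, I would use the standard averaging trick. Define $\mu:G\to\C^\times$ by $\mu(x)=\prod_{g\in G}\alpha(x,g)$, and note $\mu(1)=1$ since $\alpha(1,g)=1$. Taking the product of the cocycle identity $\alpha(x,y)\alpha(xy,z)=\alpha(x,yz)\alpha(y,z)$ over all $z\in G$, and reindexing the factor $\prod_z\alpha(x,yz)$ by $w=yz$, I obtain
\[
    \alpha(x,y)^{|G|}\,\mu(xy)=\mu(x)\,\mu(y),
\]
so that $\alpha(x,y)^{|G|}=\mu(x)\mu(y)/\mu(xy)$. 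Since $\mu^{-1}(1)=1$, this exhibits $\alpha^{|G|}$ as exactly the coboundary associated with the function $\mu^{-1}$ in the sense of part (2) of the preceding lemma.

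Finally, for each $x\in G$ I would choose some $|G|$-th root $\lambda(x)$ of $\mu(x)^{-1}$, taking $\lambda(1)=1$ (possible since $\mu(1)=1$), and set $\alpha'(x,y)=\alpha(x,y)\,\lambda(x)\lambda(y)/\lambda(xy)$. Being $\alpha$ multiplied by the coboundary of $\lambda$, $\alpha'$ is again a multiplier with $[\alpha']=[\alpha]$, and $\alpha'(x,1)=\alpha'(1,x)=1$ follows from $\lambda(1)=1$. Raising to the power $|G|$ and substituting $\lambda(x)^{|G|}=\mu(x)^{-1}$ together with the identity above yields $\alpha'(x,y)^{|G|}=1$ for all $x,y$, so $\alpha'$ is unitary with order dividing $|G|$. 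I do not expect a genuine obstacle: the $\lambda(x)$ may be chosen independently for each $x$ with no compatibility condition, precisely because the resulting ambiguity lives in a coboundary and is washed out upon raising to the $|G|$-th power. The only point needing a little care is the bookkeeping of the normalisation $\lambda(1)=1$, which guarantees that $\alpha'$ remains a properly normalised multiplier rather than merely a cocycle.
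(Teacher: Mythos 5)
Your proof is correct: the averaged cocycle identity $\alpha(x,y)^{|G|}\mu(xy)=\mu(x)\mu(y)$ with $\mu(x)=\prod_{g\in G}\alpha(x,g)$ does exhibit $\alpha^{|G|}$ as a coboundary, and twisting $\alpha$ by the coboundary of any choice of $|G|$-th roots $\lambda$ with $\lambda(1)=1$ yields a normalised multiplier $\alpha'$ in the same class with $\alpha'(x,y)^{|G|}=1$. Note that the paper itself gives no proof of this lemma but imports it from \cite{Cheng2015CharacterTheory}, Lemma 3.1; your argument is the standard (essentially Schur's) one, so there is nothing to flag.
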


\subsection{Projective representations}

\begin{definition}
    Let $V$ be a finite-dimensional vector space over $\C$. A \emph{projective representation} of $G$ over $V$ with multiplier $\alpha$ is a function $\pi:G\to\mathrm{GL}(V)$ such that $\pi(x)\pi(y)=\alpha(x,y)\pi(xy)$ for all $x,y\in G$. We denote this projective representation as $(\pi,V,\alpha)$. We call $\dim V$ the \emph{degree} of the projective representation and denote it by $\dim \pi$.

    A projective representation is called \emph{unitary} if the associated multiplier $\alpha$ is unitary.
\end{definition}

\begin{remark}
    Linear representations are projective representations with the associated multiplier $\alpha(x,y)=1$ for all $x,y\in G$. Accordingly, all the following results hold (sometimes trivially) for linear representations too.
\end{remark}

The last item of this definition deserves some elaboration:
\begin{lemma}
    \label{thm: unitary reps are unitary}
    Let $(\pi,V,\alpha)$ be a projective representation of $G$ with a unitary multiplier $\alpha$. Then $\pi(g)$ is unitary for all $g\in G$.
\end{lemma}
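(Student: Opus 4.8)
The plan is to adapt Weyl's averaging trick (the ``unitarian trick'') to the projective setting. The statement is best read as the assertion that $V$ carries a Hermitian inner product with respect to which every $\pi(g)$ is unitary; equivalently, $\pi$ is equivalent to a representation by unitary matrices. (A fully literal reading would fail, since conjugating $\pi$ by an arbitrary $S\in\mathrm{GL}(V)$ yields another projective representation with the same multiplier whose matrices need not be unitary, so some choice of inner product is implicit.) First I would fix any Hermitian inner product $\langle\cdot,\cdot\rangle_0$ on $V$ and average it over $G$ by setting
\begin{equation*}
    \langle u,v\rangle := \frac1{|G|}\sum_{g\in G}\langle \pi(g)u,\pi(g)v\rangle_0,
\end{equation*}
which is manifestly positive-definite and Hermitian, being a positive combination of such forms.

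The heart of the argument is to verify $\pi$-invariance, i.e.\ that $\langle \pi(h)u,\pi(h)v\rangle = \langle u,v\rangle$ for all $h\in G$. Substituting the definition and using $\pi(g)\pi(h)=\alpha(g,h)\pi(gh)$, each summand becomes
\begin{equation*}
    \langle \pi(g)\pi(h)u,\pi(g)\pi(h)v\rangle_0 = |\alpha(g,h)|^2\,\langle \pi(gh)u,\pi(gh)v\rangle_0 .
\end{equation*}
Here the decisive input is that $\alpha$ is unitary: every value $\alpha(g,h)$ is a root of unity and hence has modulus $1$, so the prefactor $|\alpha(g,h)|^2$ equals $1$ and drops out. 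Reindexing by $g'=gh$, which is a bijection of $G$ for fixed $h$, then collapses the sum back to $\langle u,v\rangle$, establishing invariance. Consequently every $\pi(g)$ preserves the averaged form and is therefore unitary with respect to it.

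I expect the only genuine obstacle to be conceptual rather than computational: recognising that it is the \emph{modulus} of the cocycle values, not their phases, that governs the averaging, and that the $2$-cocycle relation of \Cref{def: multiplier} plays no role here. What makes the trick go through is precisely the normalisation $|\alpha(g,h)|=1$ supplied by unitarity, which is exactly why the hypothesis cannot be dropped. As a minor sanity check I would also record that $\pi(1)=\id_V$, which follows from $\pi(1)^2=\alpha(1,1)\pi(1)=\pi(1)$ together with invertibility of $\pi(1)$, confirming that the unitarity so obtained is compatible with $\pi$ fixing the identity.
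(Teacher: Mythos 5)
Your proof is correct, but it takes a genuinely different route from the paper's. You use Weyl's averaging trick: average an arbitrary Hermitian form over $G$, observe that the multiplier enters each summand only through $|\alpha(g,h)|^2=1$, and reindex the sum; this produces a single $\pi$-invariant inner product with respect to which every $\pi(g)$ is \emph{simultaneously} unitary. The paper instead argues pointwise in $g$: if $g$ has order $n$, then $\pi(g)^n=A\id_V$ with $A=\prod_{i=1}^{n-1}\alpha(g^i,g)$; unitarity of $\alpha$ gives $A^N=1$ for some $N$, hence $\pi(g)^{nN}=\id_V$, and a finite-order operator is diagonalisable with root-of-unity eigenvalues, which is what the paper's conclusion ``unitary'' amounts to. Each approach buys something the other does not. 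Yours yields one inner product that works for all $g$ at once, needs only $|\alpha(g,h)|=1$ rather than the torsion condition $\alpha(g,h)^N=1$, and cleanly resolves the interpretive caveat you raise --- a caveat that in fact applies equally to the paper's own proof, since a matrix of finite order, e.g.\ $\left(\begin{smallmatrix}1&1\\0&-1\end{smallmatrix}\right)$, need not be unitary with respect to a pre-fixed inner product, so the paper is implicitly making the same gloss. The paper's argument, on the other hand, produces the sharper spectral information that the eigenvalues of $\pi(g)$ are roots of unity of controlled order; this computation is not incidental, as it is reused verbatim to define the order $o_\alpha(g)$ in \Cref{def: order of g in alpha} and ultimately underpins \Cref{thm: dixon character table}(2), where the exponent $e$ is what allows the characters to be embedded into $\Zzeta$ and then into $\Zp$. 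Your averaged inner product alone does not provide this, since unitarity gives only modulus-one eigenvalues, not roots of unity.
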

\begin{proof}
    Let $g\in G$ and let $n$ be the order of $g$ in $G$. Then $\pi(g)^n = A\id_V$, where $A = \prod_{i=1}^{n-1} \alpha(g^i, g)$. Since $\alpha$ is unitary, there is an integer $N$ such that $\alpha(x,y)^N=1$ for all $x,y\in G$, and so $A^N=1$. Therefore $\pi(g)^{nN}=\id_V$, which can only be the case if $\pi(g)$ is unitary. 
\end{proof}
\begin{definition}
    \label{def: order of g in alpha}
    Let $\alpha$ be a unitary multiplier on $G$. Let $g\in G$ of order $n$. As in the proof above, let $A=\prod_{i=1}^{n-1} \alpha(g^i, g)$, and let $n^*$ be the smallest positive integer such that $A^{n^*}=1$. Then we call $nn^*$ the \emph{order of $g$ in $\alpha$} and denote it by $o_\alpha(g)$.
    
    As explained above, $\pi(g)^{o_\alpha(g)} = \id_V$ for all $g\in G$ for all projective representations $(\pi,V,\alpha)$.
\end{definition}

\begin{definition}
    Two projective representations $(\pi,V,\alpha)$ and $(\pi',V',\alpha')$ of $G$ are said to be \emph{equivalent} if there exists a linear isomorphism $\phi:V\to V'$ and a function $\mu:G\to\C^\times$ with $\mu(1)=1$ such that $\mu(g)\pi'(g)\phi = \phi\pi(g)$ for all $g\in G$.
\end{definition}
\begin{lemma}[\cite{Cheng2015CharacterTheory}, Lemma 2.9]
    Let $(\pi,V,\alpha)$ be a projective representation of $G$ and $\alpha'$ a multiplier such that $[\alpha]=[\alpha']$. Then there exists a projective representation $(\pi',V',\alpha')$ that is equivalent to $(\pi,V,\alpha)$. In particular, given \Cref{thm: unitary multipliers}, there exists a unitary projective representation equivalent to $(\pi,V,\alpha)$.
\end{lemma}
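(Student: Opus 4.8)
The plan is to construct the equivalent representation by rescaling $\pi$ pointwise with a scalar-valued function, so the whole argument reduces to a cochain bookkeeping exercise. Since $[\alpha]=[\alpha']$, the ratio $\alpha'\alpha^{-1}$ lies in $B^2(G,\C)$, so there is a function $\nu:G\to\C^\times$ with $\nu(1)=1$ and
\[
\frac{\alpha'(x,y)}{\alpha(x,y)} = \frac{\nu(xy)}{\nu(x)\nu(y)} \qquad\text{for all } x,y\in G.
\]
Setting $\mu:=1/\nu$ (which again satisfies $\mu(1)=1$), this relation becomes $\alpha'(x,y)/\alpha(x,y)=\mu(x)\mu(y)/\mu(xy)$, the form I will actually use below.

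First I would take $V':=V$ and define $\pi'(g):=\mu(g)\pi(g)$; since $\mu(g)\neq0$ and $\pi(g)\in\mathrm{GL}(V)$, each $\pi'(g)$ is invertible, so $\pi':G\to\mathrm{GL}(V)$ is well defined. Next I would verify that $\pi'$ carries multiplier $\alpha'$ by the direct computation
\[
\pi'(x)\pi'(y) = \mu(x)\mu(y)\pi(x)\pi(y) = \mu(x)\mu(y)\alpha(x,y)\,\pi(xy) = \alpha'(x,y)\mu(xy)\,\pi(xy) = \alpha'(x,y)\,\pi'(xy),
\]
where the third equality uses the defining relation $\mu(x)\mu(y)\alpha(x,y)=\alpha'(x,y)\mu(xy)$. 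Hence $(\pi',V,\alpha')$ is a projective representation. Finally I would establish equivalence using $\phi:=\id_V$ and the scalar function $\nu=1/\mu$: the required identity $\nu(g)\pi'(g)\phi=\phi\pi(g)$ reduces to $\nu(g)\mu(g)\pi(g)=\pi(g)$, which holds because $\nu(g)\mu(g)=1$, and $\nu(1)=1$ as demanded by the definition of equivalence. This proves the first claim.

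The unitary corollary then follows immediately: by \Cref{thm: unitary multipliers} one may choose a unitary multiplier $\alpha'$ with $[\alpha']=[\alpha]$ and apply the construction above to obtain a unitary projective representation equivalent to $(\pi,V,\alpha)$. The only "obstacle" here is bookkeeping rather than mathematics: one must fix the direction of the coboundary (i.e.\ whether to rescale by $\mu$ or by $1/\mu$) so that the multiplier is transformed into $\alpha'$ and not $\alpha'^{-1}$, and one must keep track of which scalar function plays the role of the twist $\mu$ appearing in the definition of equivalence. No analytic or structural difficulty arises.
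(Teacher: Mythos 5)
Your proof is correct: the coboundary bookkeeping, the rescaling $\pi'(g)=\mu(g)\pi(g)$, and the verification of equivalence with $\phi=\id_V$ and twist $\nu=1/\mu$ all check out against the paper's definitions of $B^2(G,\C)$ and of equivalence. The paper itself gives no proof — it cites this as Lemma 2.9 of Ref.~\cite{Cheng2015CharacterTheory} — and your argument is precisely the standard direct construction that the cited reference uses, so there is nothing to add.
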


Therefore, we can obtain a full description of projective representations of finite groups by studying unitary projective representations only.

\begin{definition}
    \label{def: irrep}
    A projective representation $(\pi,V,\alpha)$ is called \textit{irreducible} if there are no nontrivial proper subspaces of $V$ that are invariant under $\pi(g)$ for all $g\in G$. We also use the contraction `projective irrep' for `irreducible projective representation'.
\end{definition}

\begin{proposition}[\cite{Cheng2015CharacterTheory}, Theorem 5.6]
    \label{thm: irrep dim divides order}
    The degrees of irreducible projective representations of $G$ divide $|G|$.
\end{proposition}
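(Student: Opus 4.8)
The plan is to mimic the classical algebraic-integer proof that the degree of a linear irrep divides $|G|$, carried out inside the twisted group algebra $\C_\alpha[G]$ rather than the ordinary group algebra. Recall that $\C_\alpha[G]$ has basis $\{\bar g : g\in G\}$ with product $\bar x\,\bar y = \alpha(x,y)\,\overline{xy}$, and that a projective representation $(\pi,V,\alpha)$ is the same thing as a module over $\C_\alpha[G]$, with irreducibility of $\pi$ matching simplicity of the module. The argument then splits into three ingredients: (i) the existence of a basis of the centre of $\C_\alpha[G]$ whose structure constants are algebraic integers, (ii) Schur's lemma, which forces these central elements to act as scalars on an irrep, and (iii) the orthonormality of irreducible $\alpha$-characters.

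First I would construct the central elements. For each $\alpha$-regular class $A$ with representative $a_0$, set $z_A := \sum_{a\in A} \beta(a_0,a)^{-1}\,\bar a$, where $\beta$ is the conjugation factor from \Cref{thm: burnside}. Using the cocycle identity and the identity $\beta(a_0,a'')=\beta(a_0,a)\beta(a,a'')$ for chained conjugations, one checks that $\bar h\, z_A = z_A\,\bar h$ for all $h\in G$, so that $z_A$ lies in the centre; conversely the $z_A$ span it, since the number of $\alpha$-regular classes equals the number of projective irreps with multiplier $\alpha$. The key point is that in the product $z_A z_B = \sum_C \gamma^C_{AB}\, z_C$ each structure constant $\gamma^C_{AB}$ is a sum of products of values of $\alpha$ and $\beta$, all of which are roots of unity since $\alpha$ is unitary; hence every $\gamma^C_{AB}$ is an algebraic integer. (These $\gamma^C_{AB}$ are, up to the normalisation by $\sqrt{|B||C|}$, exactly the entries appearing in \eqref{eq: burnside M matrix}.)

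Next, on an irreducible $(\pi,V,\alpha)$ of degree $d$, Schur's lemma applied to the commutative subalgebra spanned by the $z_A$ shows that each $z_A$ acts as a scalar $\lambda_A\,\id_V$. Taking traces and using that $\chi(a)=\beta(a_0,a)\chi(a_0)$ on the class $A$ gives $\lambda_A d = \tr\pi(z_A) = |A|\chi(a_0)$, so $\lambda_A = |A|\chi(a_0)/d$. Since the $\lambda_A$ are the eigenvalues of the matrices $(\gamma^C_{AB})_{B,C}$ representing multiplication by $z_A$ on the centre, and those matrices have algebraic-integer entries, each $\lambda_A$ is an algebraic integer. Finally, the orthonormality relation $\tfrac1{|G|}\sum_{g\in G}|\chi(g)|^2 = 1$ for an irreducible $\alpha$-character, together with the vanishing of $\chi$ off the $\alpha$-regular classes and $|\chi(a)|=|\chi(a_0)|$ within a class, yields
\begin{equation*}
    \frac{|G|}{d} = \sum_{A} \frac{|A|\chi(a_0)}{d}\,\overline{\chi(a_0)} = \sum_A \lambda_A\,\overline{\chi(a_0)}.
\end{equation*}
Each $\overline{\chi(a_0)}$ is a sum of roots of unity (the eigenvalues of $\pi(a_0)$, by \Cref{thm: unitary reps are unitary}) and hence an algebraic integer, so the right-hand side is an algebraic integer. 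As $|G|/d$ is also a positive rational, it must be an ordinary integer, i.e.\ $d\mid|G|$.

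The main obstacle I anticipate is ingredient (i): verifying that the $\beta$-weighted class sums $z_A$ are genuinely central. The delicate point is that in $\C_\alpha[G]$ the inverse of a basis element is $(\bar h)^{-1} = \alpha(h,h^{-1})^{-1}\overline{h^{-1}}$ rather than $\overline{h^{-1}}$ itself, and this correction must be tracked together with the precise two-variable form of $\beta$ for the conjugation factors to cancel correctly. This is the only place where the projective nature of the representation does real work; once the class algebra with its integral structure constants is in hand, steps (ii) and (iii) run exactly as in the linear case.
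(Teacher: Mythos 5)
Your argument is correct, but there is no in-paper proof to compare it with: the paper states this proposition purely by citation to Ref.~\cite{Cheng2015CharacterTheory} (Theorem 5.6) and never proves it, so your self-contained derivation is genuinely additional content. It is the classical Frobenius--Burnside algebraic-integer argument transplanted into the twisted group algebra $\C_\alpha[G]$, and it meshes exactly with the machinery the paper does develop: conjugation in $\C_\alpha[G]$ sends $\bar a\mapsto\beta_h(a)^{-1}\overline{hah^{-1}}$, so the centrality of your $z_A$ is precisely \Cref{thm: product rule} together with \Cref{thm: beta uniqueness} and its corollary $\beta(a_0,a)\beta(a,a'')=\beta(a_0,a'')$; the scalar action with $\lambda_A=|A|\chi(a_0)/d$ is literally \Cref{thm: conjugacy class sum} in the form $\pi(z_A)=|A|F(a_0)$; and your structure constants satisfy $\gamma^C_{AB}=\sqrt{|B|/|C|}\,(M_A)_{BC}$ --- note the correct relation involves $\sqrt{|B|/|C|}$, not $\sqrt{|B||C|}$ as your parenthetical says (equivalently, $\gamma^C_{AB}$ is the raw sum in \eqref{eq: burnside M matrix} divided by $|C|$); this is a harmless slip since all you use is that the $\gamma^C_{AB}$ are sums of roots of unity. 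Two points should be tightened before the proof is complete. First, the proposition concerns arbitrary multipliers, while your appeals to roots of unity (for the values of $\alpha$ and $\beta$ and for the eigenvalues of $\pi(a_0)$) and to \Cref{thm: orthonormality} all require $\alpha$ unitary; begin by invoking \Cref{thm: unitary multipliers} and the equivalence lemma (Lemma 2.9 of \cite{Cheng2015CharacterTheory}) to replace $(\pi,V,\alpha)$ by an equivalent unitary projective representation, which is still irreducible and has the same degree. Second, in the step ``$\lambda_A$ is an eigenvalue of $(\gamma^C_{AB})_{B,C}$'', make explicit that applying the central character to $z_Az_B=\sum_C\gamma^C_{AB}z_C$ yields $\sum_C\gamma^C_{AB}\lambda_C=\lambda_A\lambda_B$, and that the vector $(\lambda_C)_C$ is nonzero because its entry at the class of the identity is $\chi(1)/d=1$; only then is it an eigenvector of a matrix with algebraic-integer entries, which is what forces $\lambda_A$ to be an algebraic integer. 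With these repairs the proof stands, and its payoff is that the divisibility fact the paper must import (and uses, e.g., to invert $d$ modulo $p$ in \Cref{thm: dixon character table}) follows from lemmas the paper already proves.
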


Irreps play a central role in the study of projective representations for much the same reasons as in the case of linear representations: All projective representations can be decomposed into irreps, and Schur's lemma~\cite[Theorem~2.12]{Cheng2015CharacterTheory} holds for them.

\begin{proposition}[Complete reducibility; \cite{Cheng2015CharacterTheory}, Theorem 2.3]
    Every projective representation of $G$ is a direct sum of irreducible projective representation with the same multiplier.
\end{proposition}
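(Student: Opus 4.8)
The plan is to establish the projective analogue of Maschke's theorem — that every $\pi$-invariant subspace of $V$ admits a $\pi$-invariant complement — and then to induct on $\dim V$. Concretely, it suffices to show that if $W\subseteq V$ is a nonzero proper subspace invariant under all $\pi(g)$, then $V=W\oplus W'$ with $W'$ also invariant; applying this repeatedly peels off invariant pieces until each summand has no nontrivial invariant subspace, i.e.\ is irreducible. I would note at the outset that the requirement that the summands carry \emph{the same} multiplier $\alpha$ is automatic: restricting the defining relation $\pi(x)\pi(y)=\alpha(x,y)\pi(xy)$ to any invariant subspace $U$ shows that $(\pi|_U,U,\alpha)$ is again a projective representation with multiplier $\alpha$, so no reduction to a unitary or otherwise modified multiplier is needed.

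The conceptual heart of the argument — and the only place where projectivity as opposed to linearity intervenes — is the observation that, although $\pi$ is merely projective, the conjugation action it induces on $\mathrm{End}(V)$ is an \emph{honest} linear representation. Writing $\mathrm{Ad}_{\pi(g)}(f):=\pi(g)\,f\,\pi(g)^{-1}$, I would check that the scalar cocycle factors cancel: from $\pi(gh)=\alpha(g,h)^{-1}\pi(g)\pi(h)$ one obtains $\mathrm{Ad}_{\pi(gh)}=\mathrm{Ad}_{\pi(g)}\mathrm{Ad}_{\pi(h)}$, because the factor $\alpha(g,h)^{-1}$ multiplying $\pi(g)\pi(h)$ is undone by the reciprocal factor appearing in the inverse. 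Together with $\pi(1)=\id_V$ (forced by $\pi(1)^2=\alpha(1,1)\pi(1)=\pi(1)$ and invertibility), this makes $g\mapsto\mathrm{Ad}_{\pi(g)}$ a genuine homomorphism $G\to\mathrm{GL}(\mathrm{End}(V))$. I expect this cancellation to be the main point to get right; the pitfall is that averaging $\pi$ itself, rather than an endomorphism under $\mathrm{Ad}$, would be spoiled by the surviving phases.

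With this in hand the standard averaging trick goes through. Given an invariant $W$, I would fix any linear projection $q:V\to V$ with image $W$ and set $P:=\tfrac1{|G|}\sum_{g\in G}\pi(g)^{-1}q\,\pi(g)$, which after reindexing $g\mapsto g^{-1}$ equals the $\mathrm{Ad}$-average $\tfrac1{|G|}\sum_g\mathrm{Ad}_{\pi(g)}(q)$. Three routine checks then finish the proof: (i) each summand $\pi(g)^{-1}q\,\pi(g)$ maps $V$ into $W$ and restricts to the identity on $W$ — using that $W$ is invariant under $\pi(g)$ and hence under $\pi(g)^{-1}$ — so $P$ is a projection with image $W$; (ii) since $P$ is the average of a genuine $G$-action, $\mathrm{Ad}_{\pi(h)}(P)=P$ for every $h$, i.e.\ $\pi(h)P=P\pi(h)$; and (iii) consequently $\ker P$ is $\pi$-invariant, giving the invariant decomposition $V=W\oplus\ker P$. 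Induction on $\dim V$ then yields the full decomposition into irreducible projective representations, each with multiplier $\alpha$ by the restriction remark above.
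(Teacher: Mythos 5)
Your proof is correct. Note first that there is no in-paper proof to compare against: the paper imports this proposition from Cheng (2015, Theorem 2.3) without proving it, so your argument can only be judged on its own merits and against the paper's surrounding machinery. On the merits it is sound: the observation that $\mathrm{Ad}_{\pi(g)}(f)=\pi(g)f\pi(g)^{-1}$ defines an \emph{honest} linear $G$-action on $\mathrm{End}(V)$ — the cocycle $\alpha(g,h)$ cancelling against its reciprocal from the inverse — is exactly the point where projectivity could have derailed a naive averaging, and you identify and handle it correctly; the normalisation $\pi(1)=\id_V$ follows from $\alpha(1,1)=1$ in \Cref{def: multiplier} as you say, and your checks (i)--(iii) that $P=\tfrac1{|G|}\sum_g\pi(g)^{-1}q\,\pi(g)$ is a $\pi$-commuting projection onto $W$ are complete. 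The remark that the summands automatically inherit the multiplier $\alpha$ by restriction is also the right disposal of that clause. It is worth pointing out how closely your construction aligns with machinery the paper does develop: your $P$ is precisely $\langle q\rangle_\pi$ as defined in \Cref{thm: schur lemma} and \Cref{thm: dixon irrep matrix}, your step (ii) is the computation in the proof of \Cref{thm: dixon irrep matrix}(1), and your step (iii) is an instance of \Cref{thm: symmetry protected degeneracy} — $W$ and $\ker P$ are the $1$- and $0$-eigenspaces of $\langle q\rangle_\pi$ — so within this paper you could have obtained the invariant complement by citing those results directly. Two cosmetic quibbles only: in (iii) you need $P$ to be a projection (from (i)) and not merely $\pi$-commuting to conclude $V=W\oplus\ker P$, which you do have; and the induction should note the trivial base cases $\dim V\le 1$ (or $V$ irreducible), which you implicitly cover.
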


\begin{proposition}[Corollary of Schur's lemma; \cite{Cheng2015CharacterTheory}, Corollary 2.13]
    \label{thm: schur lemma}
    Let $(\pi,V,\alpha)$ be an irreducible projective representation of $G$. Let $f:V\to V$ be a linear map. Then,
    \begin{equation*}
        \langle f\rangle_\pi = \frac1{|G|}\sum_{g\in G} \pi(g)^{-1} f \pi(g) = \frac{\tr (f)}{\dim V} \id_V,
    \end{equation*}
    where $\id_V$  is the identity map on $V$.
\end{proposition}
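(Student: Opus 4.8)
The plan is to base everything on a single computation showing that $\langle f\rangle_\pi$ commutes with $\pi(h)$ for every $h\in G$ and every linear map $f:V\to V$. Writing out $\pi(h)^{-1}\langle f\rangle_\pi \pi(h) = \frac1{|G|}\sum_{g\in G}\pi(h)^{-1}\pi(g)^{-1} f\,\pi(g)\pi(h)$ and using $\pi(g)\pi(h)=\alpha(g,h)\pi(gh)$ together with $\pi(h)^{-1}\pi(g)^{-1}=(\pi(g)\pi(h))^{-1}=\alpha(g,h)^{-1}\pi(gh)^{-1}$, the two scalar factors $\alpha(g,h)$ and $\alpha(g,h)^{-1}$ cancel, so each summand collapses to $\pi(gh)^{-1}f\,\pi(gh)$. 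Reindexing the sum by $g\mapsto gh$ (a bijection of $G$) then gives $\pi(h)^{-1}\langle f\rangle_\pi \pi(h)=\langle f\rangle_\pi$, i.e.\ $\pi(h)\langle f\rangle_\pi = \langle f\rangle_\pi \pi(h)$. I expect this cocycle cancellation to be the conceptual crux: it is exactly the point where the projective structure is tamed, since the conjugation $\pi(g)^{-1}f\,\pi(g)$ of $f$ is insensitive to the multiplier.

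For part (1), fix an eigenvalue $\lambda$ of $\langle f\rangle_\pi$ and let $W_\lambda=\ker(\langle f\rangle_\pi-\lambda\,\id_V)$. If $v\in W_\lambda$ then $\langle f\rangle_\pi\,\pi(h)v=\pi(h)\langle f\rangle_\pi v=\lambda\,\pi(h)v$, so $\pi(h)v\in W_\lambda$; thus $W_\lambda$ is $\pi(h)$-invariant for all $h$. Since each $\pi(h)$ is invertible on $V$ and $W_\lambda$ is finite-dimensional, the restriction $\pi(h)|_{W_\lambda}$ is a bijection of $W_\lambda$, so $\pi|_{W_\lambda}:G\to\mathrm{GL}(W_\lambda)$ is well defined; restricting the identity $\pi(x)\pi(y)=\alpha(x,y)\pi(xy)$ to $W_\lambda$ shows it is a projective representation with the same multiplier $\alpha$. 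Note that this argument does not require $\langle f\rangle_\pi$ to be diagonalisable.

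For part (2), the forward direction is immediate from Schur's lemma in the form of \Cref{thm: schur lemma}: if $\pi$ is irreducible then $\langle f\rangle_\pi=\frac{\tr f}{\dim V}\id_V$ for every $f$, in particular for each basis map $f_i$. For the converse I would argue contrapositively, using that $f\mapsto\langle f\rangle_\pi$ is linear: if every $\langle f_i\rangle_\pi$ is a scalar then, since $\{f_i\}$ spans all linear maps $V\to V$, every $\langle f\rangle_\pi$ is a scalar. It therefore suffices to exhibit one $f$ with $\langle f\rangle_\pi$ not a multiple of $\id_V$ whenever $\pi$ is reducible. Assuming $\pi$ reducible, it is not irreducible, so by complete reducibility its decomposition into irreducibles has at least two summands; grouping these gives $V=V_1\oplus V_2$ with $V_1,V_2$ nonzero and $\pi$-invariant. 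Let $P$ be the projection onto $V_1$ along $V_2$. Because both summands are invariant, $P$ commutes with every $\pi(g)$, whence each conjugate $\pi(g)^{-1}P\pi(g)=P$ and so $\langle P\rangle_\pi=P$, a projection of rank strictly between $0$ and $\dim V$, which is not a scalar multiple of $\id_V$. This contradiction completes the proof.

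The only place demanding genuine care is the commuting identity of the first paragraph, where the sole subtlety is bookkeeping the multiplier; once the cancellation is verified, parts (1) and (2) follow formally, with part (2) additionally invoking the already-stated Schur's lemma and complete reducibility.
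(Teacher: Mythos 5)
You have proved the wrong statement. The proposition in question is the scalar form of Schur's lemma for projective representations --- a single claim, with no parts (1) and (2) --- whereas your ``part (1)'' and ``part (2)'' are precisely the two claims of \Cref{thm: dixon irrep matrix}, whose proofs in the paper (via \Cref{thm: symmetry protected degeneracy} and \Cref{thm: schur converse}) your arguments reproduce almost exactly, down to the cocycle cancellation in the commutation computation and the projection onto an invariant summand in the converse direction. As a proof of \Cref{thm: schur lemma} itself, your text is circular: in the forward direction of your part (2) you explicitly invoke \Cref{thm: schur lemma}, i.e.\ the very statement under proof, and nowhere in the proposal do you derive that $\langle f\rangle_\pi$ is a scalar multiple of $\id_V$ when $\pi$ is irreducible, nor that the scalar equals $\tr(f)/\dim V$.

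The repair is short and uses only ingredients you already have on the page. Combine your commutation identity with your eigenspace argument: since $V$ is a nonzero finite-dimensional complex vector space, $\langle f\rangle_\pi$ has at least one eigenvalue $\lambda$, with eigenspace $W_\lambda\neq 0$; by exactly your part-(1) reasoning $W_\lambda$ is invariant under every $\pi(g)$, so irreducibility forces $W_\lambda=V$, i.e.\ $\langle f\rangle_\pi=\lambda\,\id_V$. To identify $\lambda$ you must take traces --- a step your proposal omits entirely, even though the proposition's content is precisely the value of the constant: $\tr\bigl(\pi(g)^{-1}f\,\pi(g)\bigr)=\tr f$ for each $g$ (conjugation-invariance of the trace, where again the multiplier is irrelevant), hence $\lambda\dim V=\tr\langle f\rangle_\pi=\tr f$ and $\lambda=\tr(f)/\dim V$. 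Note that the paper itself does not prove this proposition but imports it from \cite{Cheng2015CharacterTheory} (Corollary 2.13); a blind proof of it should therefore be the self-contained two-step argument just sketched, not a proof of the downstream \Cref{thm: dixon irrep matrix} that takes the proposition as input.
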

\begin{corollary}
    \label{thm: schur shuffled}
    Since $\pi(g^{-1}) = \alpha(g,g^{-1})\pi(g)^{-1}$ for all $g\in G$, we also have
    \begin{equation*}
        \frac1{|G|}\sum_{g\in G} \pi(g) f \pi(g^{-1}) = \langle f\rangle_\pi = \frac{\tr (f)}{\dim V} \id_V,
    \end{equation*}
\end{corollary}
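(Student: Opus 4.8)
The displayed identity contains two equalities. The right one, $\langle f\rangle_\pi=\frac{\tr(f)}{\dim V}\id_V$, is precisely \Cref{thm: schur lemma} applied to the same $f$, so I would not re-derive it; the only new content is the left equality, which identifies the reversed average with $\langle f\rangle_\pi$. The plan is to obtain it purely formally, by applying the substitution $g\mapsto g^{-1}$ to the defining sum $\langle f\rangle_\pi=\frac1{|G|}\sum_{g}\pi(g)^{-1}f\,\pi(g)$ and then using the quoted relation $\pi(g^{-1})=\alpha(g,g^{-1})\pi(g)^{-1}$ to rewrite the inverses. First I would record the two inverse relations the computation needs. Putting $x=y=1$ in $\pi(x)\pi(y)=\alpha(x,y)\pi(xy)$ and using $\alpha(1,1)=1$ gives $\pi(1)^2=\pi(1)$, whence $\pi(1)=\id_V$ as $\pi(1)$ is invertible; substituting $y=g^{-1}$ then gives $\pi(g)\pi(g^{-1})=\alpha(g,g^{-1})\id_V$, which is the quoted relation, and inverting it yields $\pi(g^{-1})^{-1}=\alpha(g,g^{-1})^{-1}\pi(g)$.

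Next, since $g\mapsto g^{-1}$ is a bijection of $G$,
\[
\langle f\rangle_\pi=\frac1{|G|}\sum_{g\in G}\pi(g)^{-1}f\,\pi(g)=\frac1{|G|}\sum_{g\in G}\pi(g^{-1})^{-1}f\,\pi(g^{-1}).
\]
Substituting the two relations above for $\pi(g^{-1})^{-1}$ and $\pi(g^{-1})$, each summand becomes $\alpha(g,g^{-1})^{-1}\pi(g)\,f\,\alpha(g,g^{-1})\pi(g)^{-1}$; the two scalars are mutually reciprocal and pass through $f$, so they cancel termwise and leave $\frac1{|G|}\sum_{g}\pi(g)\,f\,\pi(g)^{-1}$. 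Combining this with \Cref{thm: schur lemma} gives the stated chain of equalities.

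The computation is short, and the one point to watch is exactly this cancellation of the factors $\alpha(g,g^{-1})$: it is what makes the reversed average coincide with $\langle f\rangle_\pi$ with no surviving $g$-dependent weight. I would emphasise that the correct operator to place to the right of $f$ is the matrix inverse $\pi(g)^{-1}$; the role of the relation $\pi(g^{-1})=\alpha(g,g^{-1})\pi(g)^{-1}$ is precisely to convert the re-indexed factor $\pi(g^{-1})$ back into $\pi(g)^{-1}$ (and likewise $\pi(g^{-1})^{-1}$ into $\pi(g)$), since only then do the scalar prefactors cancel and \Cref{thm: schur lemma} become directly applicable.
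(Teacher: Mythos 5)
Your argument is correct and is essentially the paper's own: the corollary carries no separate proof, the clause ``Since $\pi(g^{-1})=\alpha(g,g^{-1})\pi(g)^{-1}$'' standing in for exactly the reindexing $g\mapsto g^{-1}$ and termwise cancellation of the mutually reciprocal scalars that you write out, with the second equality delegated to \Cref{thm: schur lemma}; your derivation of $\pi(1)=\id_V$ and of the two inverse relations from the cocycle axioms is also sound.

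One further point, in your favour: what you actually prove is $\frac1{|G|}\sum_{g\in G}\pi(g)\,f\,\pi(g)^{-1}=\langle f\rangle_\pi$, with the \emph{matrix} inverse to the right of $f$, whereas the display in the statement literally has $\pi(g^{-1})$. These differ termwise by the scalar $\alpha(g,g^{-1})$, and the literal version is false in general: for $G=\{1,s\}$ of order two with $\alpha(s,s)=-1$ and the one-dimensional representation $\pi(s)=i$ on $V=\C$, one gets $\frac12\bigl(f+\pi(s)f\pi(s^{-1})\bigr)=\frac12(f-f)=0\neq f=\langle f\rangle_\pi$. The version you prove is the intended one --- it is the form the paper actually invokes later, in the derivation of Eq.~\eqref{eq: F matrix product form}, where the sum appearing is $\sum_h\pi(h)\pi(g)\pi(h)^{-1}$ --- so your closing emphasis on placing $\pi(g)^{-1}$ rather than $\pi(g^{-1})$ next to $f$ does not merely guard against a slip: it identifies a typo in the printed statement.
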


Duals and tensor products of projective representations are also projective representations, albeit with different multipliers \cite[\S2.5]{Cheng2015CharacterTheory}:
\begin{lemma}
    \label{thm: dual}
    Let $(\pi,V,\alpha)$ be a projective representation of $G$. Let $V^* = \hom(V,\C)$ be the dual of $V$ and let
    \begin{equation*}
        \pi^*(g):V^*\to V^*, \pi^*(g) = \frac{^t\pi(g^{-1})}{\alpha(g,g^{-1})}
    \end{equation*}
    for all $g\in G$. Then $\pi^*$ is a projective representation of $G$ over $V^*$, with multiplier $\alpha^{-1}$.
\end{lemma}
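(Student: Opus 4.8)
The plan is to verify directly the defining relation of a projective representation with multiplier $\alpha^{-1}$, namely $\pi^*(x)\pi^*(y) = \alpha(x,y)^{-1}\pi^*(xy)$ for all $x,y\in G$. Invertibility of each $\pi^*(g)$ is immediate, since $\pi(g^{-1})\in\mathrm{GL}(V)$ and $\alpha(g,g^{-1})\in\C^\times$, so that ${}^t\pi(g^{-1})/\alpha(g,g^{-1})$ is an invertible map $V^*\to V^*$; hence $\pi^*$ indeed takes values in $\mathrm{GL}(V^*)$.

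First I would expand the left-hand side using the fact that the transpose is an anti-homomorphism, ${}^tA\,{}^tB = {}^t(BA)$ for linear maps $A,B:V\to V$. This gives
\[
    \pi^*(x)\pi^*(y) = \frac{{}^t\pi(x^{-1})\,{}^t\pi(y^{-1})}{\alpha(x,x^{-1})\alpha(y,y^{-1})} = \frac{{}^t\big(\pi(y^{-1})\pi(x^{-1})\big)}{\alpha(x,x^{-1})\alpha(y,y^{-1})}.
\]
Applying the projective relation $\pi(y^{-1})\pi(x^{-1}) = \alpha(y^{-1},x^{-1})\pi((xy)^{-1})$ and comparing with $\alpha(x,y)^{-1}\pi^*(xy) = {}^t\pi((xy)^{-1})/[\alpha(x,y)\alpha(xy,(xy)^{-1})]$ reduces the claim to the purely scalar identity
\[
    \alpha(y^{-1},x^{-1})\,\alpha(x,y)\,\alpha(xy,(xy)^{-1}) = \alpha(x,x^{-1})\,\alpha(y,y^{-1}).
\]

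To establish this identity I would apply the cocycle condition \Cref{def: multiplier}(1) twice. Taking $(a,b,c)=(x,y,y^{-1}x^{-1})$ in $\alpha(a,b)\alpha(ab,c)=\alpha(a,bc)\alpha(b,c)$ and using $(xy)^{-1}=y^{-1}x^{-1}$ yields $\alpha(x,y)\alpha(xy,(xy)^{-1}) = \alpha(x,x^{-1})\alpha(y,y^{-1}x^{-1})$, so it remains to check $\alpha(y^{-1},x^{-1})\alpha(y,y^{-1}x^{-1}) = \alpha(y,y^{-1})$. This is exactly the cocycle condition for $(a,b,c)=(y,y^{-1},x^{-1})$, once the normalisation $\alpha(1,x^{-1})=1$ from \Cref{def: multiplier}(2) is used.

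There is no genuine obstacle here beyond careful index bookkeeping: the substantive point is that the transpose reverses products, which is what turns the multiplier $\alpha$ into its inverse while introducing the scalar cocycle factors, and the verification that those factors collapse is a two-step application of the cocycle relation. The one place demanding a little care is keeping the order of the factors straight when transposing, since swapping it would produce $\alpha(x^{-1},y^{-1})$ in place of $\alpha(y^{-1},x^{-1})$ and break the identity.
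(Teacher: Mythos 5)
Your proof is correct. The paper itself does not prove this lemma --- it is stated with a citation to Ref.~\cite[\S 2.5]{Cheng2015CharacterTheory} --- so there is no internal proof to compare against; your direct verification is exactly the standard argument one would expect that reference to contain. The two applications of the cocycle condition, with $(x,y,y^{-1}x^{-1})$ and then $(y,y^{-1},x^{-1})$ together with the normalisation $\alpha(1,x^{-1})=1$, do collapse the scalar factors as you claim, and you correctly identify the one delicate point: the transpose reverses the order of the product, which is precisely what converts the multiplier $\alpha$ into $\alpha^{-1}$ rather than into $g\mapsto\alpha(x^{-1},y^{-1})$-type factors that would not match.
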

\begin{lemma}
    \label{thm: tensor product}
    Let $(\pi,V,\alpha)$ and $(\pi',V',\alpha')$ be projective representations of $G$.    Let $(\pi\otimes\pi')(g) = \pi(g)\otimes \pi'(g)$ for all $g\in G$. Then $\pi\otimes\pi'$ is a projective representation of $G$ over $V\otimes V'$ with multiplier $\alpha\alpha'$.
\end{lemma}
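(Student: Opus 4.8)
The plan is to verify directly the requirements in the definition of a projective representation for the triple $(\pi\otimes\pi', V\otimes V', \alpha\alpha')$: namely, that $\alpha\alpha'$ is a multiplier, that $(\pi\otimes\pi')(g)$ lies in $\mathrm{GL}(V\otimes V')$ for every $g\in G$, and that the cocycle-twisted composition law
$(\pi\otimes\pi')(x)(\pi\otimes\pi')(y) = (\alpha\alpha')(x,y)(\pi\otimes\pi')(xy)$
holds. The first point is immediate from the earlier structural result: since multipliers form the abelian group $Z^2(G,\C)$ under pointwise multiplication, the pointwise product $\alpha\alpha'$ of two multipliers is again a multiplier.

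The central step is the composition law, which rests entirely on the mixed-product (interchange) identity $(A\otimes B)(C\otimes D) = (AC)\otimes(BD)$ for linear maps. Applying it yields
\begin{align*}
    (\pi\otimes\pi')(x)(\pi\otimes\pi')(y) &= \big(\pi(x)\otimes\pi'(x)\big)\big(\pi(y)\otimes\pi'(y)\big) \\
    &= \big(\pi(x)\pi(y)\big)\otimes\big(\pi'(x)\pi'(y)\big).
\end{align*}
I would then substitute the defining relations $\pi(x)\pi(y)=\alpha(x,y)\pi(xy)$ and $\pi'(x)\pi'(y)=\alpha'(x,y)\pi'(xy)$, and pull the two scalar factors freely out of the tensor product, arriving at $\alpha(x,y)\alpha'(x,y)\,\big(\pi(xy)\otimes\pi'(xy)\big) = (\alpha\alpha')(x,y)\,(\pi\otimes\pi')(xy)$, exactly as required.

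For the membership in $\mathrm{GL}(V\otimes V')$, I would note that each $\pi(g)$ and $\pi'(g)$ is invertible by hypothesis, so by the same interchange identity $\pi(g)\otimes\pi'(g)$ is invertible with inverse $\pi(g)^{-1}\otimes\pi'(g)^{-1}$; hence $(\pi\otimes\pi')(g)\in\mathrm{GL}(V\otimes V')$ for all $g$. There is no genuine obstacle in this argument: the proof is a short direct calculation, and the only points requiring care are invoking the interchange identity in the correct order and tracking that the scalars $\alpha(x,y)$, $\alpha'(x,y)$ commute past the tensor factors, which they do since they are elements of $\C$.
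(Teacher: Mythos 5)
Your proposal is correct: the mixed-product identity $(A\otimes B)(C\otimes D)=(AC)\otimes(BD)$, together with pulling the scalar factors $\alpha(x,y),\alpha'(x,y)\in\C^\times$ out of the tensor product, is exactly the standard verification, and your checks that $\alpha\alpha'$ is again a multiplier (via the group structure of $Z^2(G,\C)$) and that $\pi(g)\otimes\pi'(g)$ is invertible are the right complementary points. The paper itself gives no proof of this lemma---it is quoted from Ref.~\cite[\S2.5]{Cheng2015CharacterTheory}---so there is nothing to compare against beyond noting that your argument is the expected one and is complete.
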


Regular projective representations are also defined much the same way as in the linear case:
\begin{lemma}
    Let $V$ be a $|G|$-dimensional vector space, with a basis $\{e_g:g\in G\}$. Let $\alpha$ be a multiplier on $G$.
    \begin{enumerate}
        \item Let $L(h)$ be the linear map defined by $L(h)\cdot e_g = \alpha(h,g)e_{hg}$ for all $g,h$.
        \item Let $R(h)$ be the linear map defined by $R(h)\cdot e_g = \alpha(h,g^{-1})e_{gh^{-1}}$ for all $g,h$.
    \end{enumerate}
    Then $(L,V,\alpha)$ and $(R,V,\alpha)$ are projective representations of $G$, called respectively \emph{left} and \emph{right $\alpha$-regular projective representations}.
\end{lemma}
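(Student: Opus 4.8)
The plan is to verify directly that each of $L$ and $R$ satisfies the defining relation $\pi(x)\pi(y)=\alpha(x,y)\pi(xy)$, together with the requirement that $L(h),R(h)\in\mathrm{GL}(V)$. Since both maps are specified by their action on the basis $\{e_g\}$, it suffices to check the relation on each $e_g$ and extend by linearity.

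For the left-regular case I would evaluate both sides on $e_g$. Applying the definition twice gives $L(x)L(y)e_g = \alpha(y,g)\,L(x)e_{yg} = \alpha(y,g)\alpha(x,yg)\,e_{xyg}$, whereas $\alpha(x,y)L(xy)e_g = \alpha(x,y)\alpha(xy,g)\,e_{xyg}$. The two agree precisely when $\alpha(y,g)\alpha(x,yg)=\alpha(x,y)\alpha(xy,g)$, which is exactly the cocycle identity (1) of \Cref{def: multiplier} applied with $(x,y,z)=(x,y,g)$. Hence $L(x)L(y)=\alpha(x,y)L(xy)$ for all $x,y\in G$.

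For the right-regular case the same strategy applies, the only care point being the bookkeeping of inverses. I would compute $R(x)R(y)e_g = \alpha(y,g^{-1})\,R(x)e_{gy^{-1}} = \alpha(y,g^{-1})\alpha(x,(gy^{-1})^{-1})\,e_{(gy^{-1})x^{-1}}$ and then simplify $(gy^{-1})^{-1}=yg^{-1}$ and $(gy^{-1})x^{-1}=g(xy)^{-1}$. Comparing with $\alpha(x,y)R(xy)e_g=\alpha(x,y)\alpha(xy,g^{-1})\,e_{g(xy)^{-1}}$, the relation reduces to $\alpha(y,g^{-1})\alpha(x,yg^{-1})=\alpha(x,y)\alpha(xy,g^{-1})$, which is again the cocycle identity, this time with $(x,y,z)=(x,y,g^{-1})$.

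Finally, invertibility follows because for fixed $h$ the map $g\mapsto hg$ (respectively $g\mapsto gh^{-1}$) is a bijection of $G$ and $\alpha$ takes values in $\C^\times$; thus $L(h)$ and $R(h)$ send the basis $\{e_g\}$ to a nonzero rescaling of a permutation of itself, and so lie in $\mathrm{GL}(V)$. As a sanity check, condition (2) of \Cref{def: multiplier} gives $L(1)=R(1)=\id_V$. I do not anticipate any genuine obstacle here: the content of the statement is carried entirely by the $2$-cocycle condition, and the only step demanding attention is the correct handling of inverses in the right-regular computation.
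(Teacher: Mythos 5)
Your proof is correct and follows essentially the same route as the paper: both verify the relation $\pi(x)\pi(y)=\alpha(x,y)\pi(xy)$ on each basis vector $e_g$ and reduce it to the cocycle identity of \Cref{def: multiplier}(1) with third argument $g$ (left case) and $g^{-1}$ (right case). Your explicit check of invertibility and of $L(1)=R(1)=\id_V$ is a small addition the paper leaves implicit, but it changes nothing substantive.
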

\begin{proof}
    For all $x,y,g\in G$,
    \begin{enumerate}
        \item $L(x)\cdot L(y)\cdot e_g = \alpha(y,g)L(x)\cdot e_{yg} = \alpha(x,yg)\alpha(y,g) e_{xyg} = \alpha(xy,g)\alpha(x,y) e_{xyg} = \alpha(x,y)L(xy)\cdot e_g$;
        \item $R(x)\cdot R(y)\cdot e_g = \alpha(y,g^{-1})L(x)\cdot e_{gy^{-1}} = \alpha(x,yg^{-1})\alpha(y,g^{-1}) e_{gy^{-1}x^{-1}} = \alpha(xy,g^{-1})\alpha(x,y) e_{g(xy)^{-1}} = \alpha(x,y)R(xy)\cdot e_g$,
    \end{enumerate}
    as required for a projective representation.
\end{proof}

\subsection{Characters of projective representations and $\alpha$-class functions}

\begin{definition}
    The \textit{character} $\chi:G\to\C$ of a projective representation $(\pi,V,\alpha)$ of $G$ is given by $\chi(g) = \tr (\pi(g))$ for all $g\in G$.
\end{definition}

Similar to the case of linear representations, characters specify projective representations essentially uniquely:
\begin{lemma}[\cite{Cheng2015CharacterTheory}, Corollary 3.8]
    Two unitary projective representations with the same multiplier and the same character are equivalent.
\end{lemma}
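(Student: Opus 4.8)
The plan is to follow the classical argument that a character determines a representation up to isomorphism, adapted to the projective setting with a fixed multiplier $\alpha$. By complete reducibility, I would write both representations as direct sums of irreducible projective representations with the same multiplier $\alpha$, say $\pi\cong\bigoplus_j m_j\sigma_j$ and $\pi'\cong\bigoplus_j m_j'\sigma_j$, where $\{\sigma_j\}$ runs over the distinct $\alpha$-irreps and $m_j,m_j'\ge 0$ are multiplicities. Passing to characters gives $\chi=\sum_j m_j\chi_{\sigma_j}$ and $\chi'=\sum_j m_j'\chi_{\sigma_j}$. If I can show that the irreducible $\alpha$-characters $\chi_{\sigma_j}$ are linearly independent, then the hypothesis $\chi=\chi'$ forces $m_j=m_j'$ for every $j$; the two representations then have identical isotypic decompositions and are therefore equivalent, via the direct sum of the isomorphisms between corresponding summands, with the trivial twist $\mu\equiv 1$ (consistent precisely because the multipliers agree).

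The engine is a projective version of the Schur orthogonality relations. For two $\alpha$-irreps $(\pi,V,\alpha)$ and $(\pi',V',\alpha)$ and any linear map $f:V\to V'$, I would consider the averaged map $\Phi_f=\frac1{|G|}\sum_{g\in G}\pi'(g)\,f\,\pi(g)^{-1}$. A short computation using $\pi'(h)\pi'(g)=\alpha(h,g)\pi'(hg)$ together with $\pi(g)^{-1}\pi(h)^{-1}=\alpha(h,g)^{-1}\pi(hg)^{-1}$ shows that the two cocycle factors cancel, so that $\pi'(h)\Phi_f\pi(h)^{-1}=\Phi_f$ after reindexing $g\mapsto hg$; that is, $\Phi_f$ genuinely intertwines $\pi$ and $\pi'$. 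Schur's lemma (Theorem~2.12 of Ref.~\cite{Cheng2015CharacterTheory}) then gives $\Phi_f=0$ when $\pi\not\cong\pi'$, and $\Phi_f=(\tr f/\dim V)\,\id_V$ when $\pi=\pi'$, the scalar being fixed by taking traces. Reading these identities off in fixed bases yields the matrix-coefficient orthogonality for $\frac1{|G|}\sum_g\pi'(g)_{\beta\gamma}\,(\pi(g)^{-1})_{ki}$, which vanishes for inequivalent irreps and equals $\frac1{\dim V}\delta_{\beta i}\delta_{\gamma k}$ for a single irrep.

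To convert this into an orthonormality statement for characters, I would invoke unitarity: since $\alpha$ is unitary, \Cref{thm: unitary reps are unitary} guarantees each $\pi(g)$ is unitary, so $(\pi(g)^{-1})_{ii}=\overline{\pi(g)_{ii}}$ and hence $\overline{\chi(g)}=\sum_i (\pi(g)^{-1})_{ii}$. Summing the matrix-coefficient relation over the diagonal indices then gives $\frac1{|G|}\sum_{g}\chi_{\sigma_i}(g)\,\overline{\chi_{\sigma_j}(g)}=\delta_{ij}$, i.e.\ the distinct irreducible $\alpha$-characters are orthonormal and in particular linearly independent, which is exactly the input needed in the first paragraph. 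This is the only place the unitarity hypothesis enters: without it the identification $\pi(g)^{-1}=\pi(g)^\dagger$ is unavailable and the Hermitian inner product no longer computes multiplicities.

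I expect the main obstacle to be the bookkeeping of the multiplier rather than anything conceptual: one must check carefully that the cocycle factors produced by $\pi'(h)\pi'(g)$ and by $\pi(g)^{-1}\pi(h)^{-1}$ cancel exactly, so that $\Phi_f$ is an honest intertwiner with no residual scalar function, and that the reindexing in the averaging is legitimate. Once $\Phi_f$ is established as an intertwiner, the application of Schur's lemma, the trace computation, and the passage from matrix coefficients to characters via unitarity are all routine.
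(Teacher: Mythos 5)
Your proof is correct, but there is nothing in the paper to compare it against: the paper imports this statement verbatim from Ref.~\cite{Cheng2015CharacterTheory} (Corollary 3.8) and gives no proof of its own. Your reconstruction is the standard argument and it goes through: since both representations carry the \emph{same} multiplier $\alpha$, the cocycle factors in $\pi'(h)\pi'(g)=\alpha(h,g)\pi'(hg)$ and $\pi(g)^{-1}\pi(h)^{-1}=\alpha(h,g)^{-1}\pi(hg)^{-1}$ do cancel, so $\Phi_f$ is an honest intertwiner; Schur's lemma then gives the matrix-coefficient relations, unitarity (via \Cref{thm: unitary reps are unitary}, or equivalently \Cref{thm: character basics}(2)) converts them into $\frac1{|G|}\sum_g\chi_{\sigma_i}(g)\overline{\chi_{\sigma_j}(g)}=\delta_{ij}$, and complete reducibility plus linear independence of the irreducible characters forces equal multiplicities. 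Two remarks. First, the orthonormality you derive from scratch in your second and third paragraphs is exactly the content of \Cref{thm: orthonormality} (Cheng's Theorem 3.15), which the paper also quotes as a known result; citing it would reduce your proof to the first paragraph alone, so what you have written is in effect a proof of that proposition followed by the short multiplicity argument. Second, your conclusion is strictly stronger than the statement: you produce an isomorphism with trivial twist $\mu\equiv 1$, whereas the paper's definition of equivalence permits an arbitrary scalar function $\mu$. This strengthening is harmless (strict isomorphism implies equivalence) and is in fact the natural thing to prove, since twisting by a nontrivial $\mu$ generally changes the character, so the twisted notion of equivalence could not be read off from characters alone.
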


Many basic properties of the characters of linear representations carry over with slight changes:
\begin{lemma}[\cite{Cheng2015CharacterTheory}, Lemma 3.4]
    \label{thm: character basics}
    Let $\chi$ be the character of a unitary projective representation $(\pi,V,\alpha)$ of $G$. Then
    \begin{enumerate}
        \item $\chi(1) = \dim \pi$
        \item For all $g\in G$, $\chi(g^{-1}) = \alpha(g,g^{-1}) \overline{\chi(g)}$.
        \item For all $g,h\in G$,
        \begin{equation}
            \chi(hgh^{-1}) = \frac{\alpha(h,h^{-1})}{\alpha(h,gh^{-1})\alpha(g,h^{-1})}\chi(g) = \frac{\alpha(h,h^{-1})}{\alpha(h,g)\alpha(hg,h^{-1})}\chi(g).
            \label{eq: character conjugation}
        \end{equation}
    \end{enumerate}
\end{lemma}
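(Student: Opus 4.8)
The plan is to derive all three identities directly from the defining relation $\pi(x)\pi(y)=\alpha(x,y)\pi(xy)$, using only the normalisation of $\alpha$ from \Cref{def: multiplier} and the unitarity of $\pi$ guaranteed by \Cref{thm: unitary reps are unitary}. For (1), I would set $x=y=1$ in the defining relation and invoke $\alpha(1,1)=1$ to get $\pi(1)^2=\pi(1)$; since $\pi(1)\in\mathrm{GL}(V)$ is invertible, this forces $\pi(1)=\id_V$, so $\chi(1)=\tr(\id_V)=\dim V=\dim\pi$. For (2), the key step is that taking $y=g^{-1}$ and using $\pi(1)=\id_V$ yields $\pi(g)\pi(g^{-1})=\alpha(g,g^{-1})\id_V$, hence $\pi(g^{-1})=\alpha(g,g^{-1})\pi(g)^{-1}$. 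Because $\alpha$ is unitary, \Cref{thm: unitary reps are unitary} gives that $\pi(g)$ is unitary, so $\pi(g)^{-1}=\pi(g)^\dagger$; taking traces and using $\tr(\pi(g)^\dagger)=\overline{\tr\pi(g)}$ then produces $\chi(g^{-1})=\alpha(g,g^{-1})\overline{\chi(g)}$.

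The substance of the lemma is (3). I would compute the conjugate $\pi(h)\pi(g)\pi(h^{-1})$ by applying the defining relation twice, first as $\pi(h)\pi(g)=\alpha(h,g)\pi(hg)$ and then as $\pi(hg)\pi(h^{-1})=\alpha(hg,h^{-1})\pi(hgh^{-1})$, which gives $\pi(h)\pi(g)\pi(h^{-1})=\alpha(h,g)\alpha(hg,h^{-1})\pi(hgh^{-1})$. Eliminating $\pi(h^{-1})$ through the relation $\pi(h^{-1})=\alpha(h,h^{-1})\pi(h)^{-1}$ already established in (2) rearranges this to $\pi(hgh^{-1})=\frac{\alpha(h,h^{-1})}{\alpha(h,g)\alpha(hg,h^{-1})}\,\pi(h)\pi(g)\pi(h)^{-1}$, and cyclicity of the trace cancels the conjugating factors to yield the second equality in \eqref{eq: character conjugation}. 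To obtain the first equality it only remains to check that the two denominators coincide, i.e.\ $\alpha(h,g)\alpha(hg,h^{-1})=\alpha(h,gh^{-1})\alpha(g,h^{-1})$; this is exactly the cocycle condition of \Cref{def: multiplier}(1) with $(x,y,z)=(h,g,h^{-1})$.

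None of these steps presents a genuine difficulty, so the ``main obstacle'' is really only a matter of bookkeeping: one must be careful to insert the unitarity input at the right place, since replacing $\pi(g)^{-1}$ by $\pi(g)^\dagger$ in (2) — and thereby turning $\tr$ into complex conjugation — is precisely where the hypothesis that $\alpha$, and hence $\pi$, is unitary is indispensable and cannot be dropped. The conjugation formula (3), by contrast, makes no use of unitarity at all, relying only on the invertibility of $\pi(h)$ and the cocycle identity, so I would take care to present it so that this distinction is transparent.
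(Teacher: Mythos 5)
Your proof is correct and takes essentially the same route as the paper's: trace of $\pi(1)=\id_V$ for (1), trace of $\pi(g^{-1})=\alpha(g,g^{-1})\pi(g)^{-1}=\alpha(g,g^{-1})\pi(g)^\dagger$ for (2), and trace of the conjugation identity $\pi(hgh^{-1})=\frac{\alpha(h,h^{-1})}{\alpha(h,g)\alpha(hg,h^{-1})}\,\pi(h)\pi(g)\pi(h)^{-1}$ for (3), with the two denominators identified via the cocycle condition of \Cref{def: multiplier}(1). You merely make explicit some steps the paper leaves implicit (deriving $\pi(1)=\id_V$, the cocycle bookkeeping), and your observation that unitarity is needed only in (2) is accurate, since trace invariance under conjugation requires no unitarity.
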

\begin{proof}
    \begin{enumerate}
        \item Take the trace of $\pi(1)=\id_V$.
        \item Let $g\in G$. By \Cref{thm: unitary reps are unitary}, $\pi(g)$ is unitary, so we have $\pi(g^{-1}) = \alpha(g,g^{-1})\pi(g)^{-1} = \alpha(g,g^{-1})\pi(g)^*$. The claim follows by taking the trace of both sides.
        \item Let $g,h\in G$. Then we have 
        \begin{equation}
            \pi(hgh^{-1}) = \frac{\alpha(h,h^{-1})}{\alpha(h,gh^{-1})\alpha(g,h^{-1})} \pi(h)\pi(g)\pi(h)^{-1} = \frac{\alpha(h,h^{-1})}{\alpha(h,g)\alpha(hg,h^{-1})}\pi(h)\pi(g)\pi(h)^{-1};
            \label{eq: rep conjugation}
        \end{equation}
        the claim follows by taking the trace and noting that $\pi(h)\pi(g)\pi(h)^{-1}$ and $\pi(g)$ are unitary similar.
    \end{enumerate}
\end{proof}

In line with (3), we need to generalise the notion of class functions:

\begin{definition}[\cite{Cheng2015CharacterTheory}, Definition 3.13]
    \label{def: class function}
    Let $\alpha$ be a multiplier on $G$. A function $f:G\to \C$ is called an \textit{$\alpha$-class function} if for all $g,h\in G$,
    \begin{equation*}
        f(hgh^{-1}) = \frac{\alpha(h,h^{-1})}{\alpha(h,gh^{-1})\alpha(g,h^{-1})}f(g) = \frac{\alpha(h,h^{-1})}{\alpha(h,g)\alpha(hg,h^{-1})}f(g).
    \end{equation*}
    In particular, the characters of unitary projective representations $(\pi,V,\alpha)$ are $\alpha$-class functions.
\end{definition}

\begin{definition}
    Let $\alpha$ be a multiplier on $G$. An element $g\in G$ is said to be \textit{$\alpha$-regular} if 
    \begin{equation*}
        \frac{\alpha(h,h^{-1})}{\alpha(h,gh^{-1})\alpha(g,h^{-1})} = \frac{\alpha(h,h^{-1})}{\alpha(h,g)\alpha(hg,h^{-1})} = 1
    \end{equation*} 
    for all $h\in G$ such that $gh=hg$. 
\end{definition}

From these definitions, it is clear that an $\alpha$-class function can only be nonzero for $\alpha$-regular elements of $G$. In particular, the character of any unitary projective representation $(\pi,V,\alpha)$ vanishes for all non-$\alpha$-regular elements of $G$.

\begin{lemma}[\cite{Cheng2015CharacterTheory}, Lemma 3.16(2)]
    \label{thm: regular classes}
    If $g\in G$ is $\alpha$-regular, so are the conjugates of $g$. The resulting conjugacy classes are also said to be \emph{$\alpha$-regular.}
\end{lemma}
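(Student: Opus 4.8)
The plan is to show that the conjugation factors appearing in \eqref{eq: character conjugation} obey a cocycle-type composition law, and then read off the invariance of $\alpha$-regularity from it. Write $\beta_h(g) := \alpha(h,h^{-1})/[\alpha(h,gh^{-1})\alpha(g,h^{-1})]$ for the scalar multiplying $\chi(g)$ in \eqref{eq: character conjugation}, so that $g$ is $\alpha$-regular precisely when $\beta_h(g)=1$ for every $h$ in the centraliser $C_G(g)$. Each $\beta_h(g)$ is a ratio of values of $\alpha$ and hence lies in $\C^\times$; in particular it is nonzero and may be cancelled.

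First I would establish the identity
\[ \beta_{ab}(g) = \beta_a(bgb^{-1})\,\beta_b(g) \qquad (a,b,g\in G). \]
The slick route is to fix any projective representation $(\pi,V,\alpha)$ with $V\neq 0$ — for instance the left $\alpha$-regular representation — and apply \eqref{eq: rep conjugation} twice to $(ab)g(ab)^{-1} = a(bgb^{-1})a^{-1}$. Expanding $\pi(ab)=\alpha(a,b)^{-1}\pi(a)\pi(b)$ shows that $\pi(ab)\pi(g)\pi(ab)^{-1} = \pi(a)\pi(b)\pi(g)\pi(b)^{-1}\pi(a)^{-1}$, the two factors of $\alpha(a,b)$ cancelling; comparing the two evaluations of $\pi\big(a(bgb^{-1})a^{-1}\big)$ as scalar multiples of this one operator yields the displayed law. (Alternatively, the same identity can be verified directly from the $2$-cocycle condition of \Cref{def: multiplier}(1), at the cost of a longer computation.)

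With the composition law in hand, let $g$ be $\alpha$-regular and let $g' = kgk^{-1}$ be a conjugate. Fix $h'\in C_G(g')$ and set $h := k^{-1}h'k$; conjugation by $k$ is an automorphism carrying $C_G(g)$ onto $C_G(g')$, so $h\in C_G(g)$ and every element of $C_G(g')$ arises in this way. Since $h'k = kh$, I would evaluate $\beta$ on this common element in two ways: the composition law gives $\beta_{kh}(g) = \beta_k(hgh^{-1})\beta_h(g) = \beta_k(g)\beta_h(g)$ using $hgh^{-1}=g$, while $\beta_{h'k}(g) = \beta_{h'}(kgk^{-1})\beta_k(g) = \beta_{h'}(g')\beta_k(g)$. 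Cancelling the nonzero factor $\beta_k(g)$ gives $\beta_{h'}(g') = \beta_h(g)$, which equals $1$ because $g$ is $\alpha$-regular and $h\in C_G(g)$. As $h'\in C_G(g')$ was arbitrary, $g'$ is $\alpha$-regular, and the conjugacy class of $g$ is therefore $\alpha$-regular.

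The main obstacle is the composition law; everything else is bookkeeping. The representation-theoretic derivation makes it essentially immediate, the only point requiring care being the cancellation of the $\alpha(a,b)$ factors, which guarantees that the law involves $\beta$ alone and not $\alpha$ explicitly.
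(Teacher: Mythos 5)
Your proof is correct. Note that the paper does not actually prove this lemma at all --- it imports it from Ref.~\cite{Cheng2015CharacterTheory} --- but your argument closely parallels the machinery the paper develops for its own purposes in \S\ref{sec: class factors}: your composition law $\beta_{ab}(g)=\beta_a(bgb^{-1})\,\beta_b(g)$ is precisely \Cref{thm: product rule} (with $a=k$, $b=h$, $g=x$), and your centraliser bookkeeping (using $C_G(kgk^{-1})=kC_G(g)k^{-1}$ and then cancelling the nonzero factor $\beta_k(g)$) is the same device used in the paper's proof of \Cref{thm: beta uniqueness}. Where you genuinely differ is in how the composition law is established: the paper verifies it by a direct, admittedly tedious, manipulation of the 2-cocycle identity of \Cref{def: multiplier}(1), whereas you conjugate inside a projective representation and cancel the invertible operator $\pi(a)\pi(b)\pi(g)\pi(b)^{-1}\pi(a)^{-1}$. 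Your route is sound: the identity \eqref{eq: rep conjugation} is purely algebraic (its derivation uses only $\pi(x)\pi(y)=\alpha(x,y)\pi(xy)$ and $\pi(h)^{-1}=\alpha(h,h^{-1})^{-1}\pi(h^{-1})$, never unitarity, so the fact that it appears in the paper inside a lemma stated for unitary representations is immaterial), and the left $\alpha$-regular representation supplies a nonzero representation for \emph{every} multiplier $\alpha$, so the scalar comparison is legitimate. What the representation-theoretic shortcut buys is brevity and conceptual clarity --- the cocycle gymnastics are outsourced to the associativity of operator conjugation; what it costs is a dependence on the existence of a nonzero projective representation with multiplier $\alpha$, which the paper's brute-force computation avoids entirely. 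Since the paper establishes the regular representation independently of anything in \S\ref{sec: class factors}, there is no circularity, and either derivation of the composition law completes your proof.
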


\begin{proposition}[\cite{Cheng2015CharacterTheory}, Theorem 3.15]
    \label{thm: orthonormality}
    Let $\alpha$ be a unitary multiplier on $G$ and let $\mathbb{H}_\alpha$ be the space of $\alpha$-class functions on $G$, equipped with the inner product
    \begin{equation}
        (\phi,\psi) = \frac1{|G|}\sum_{g\in G} \phi(g)\overline{\psi(g)}.
    \end{equation}
    Then the characters of inequivalent projective irreps with multiplier $\alpha$ form an orthonormal basis of $\mathbb{H}_\alpha$.
\end{proposition}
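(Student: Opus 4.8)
The plan is to prove the two assertions --- orthonormality and spanning --- separately, in each case arranging the cocycle factors so that they cancel and the problem reduces to ordinary linear representation theory.

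First I would prove orthonormality. Given two unitary projective irreps $(\pi,V,\alpha)$ and $(\pi',V',\alpha)$ with the \emph{same} multiplier, I would form the tensor product $\rho := \pi\otimes(\pi')^*$. By \Cref{thm: dual} the dual $(\pi')^*$ has multiplier $\alpha^{-1}$, so by \Cref{thm: tensor product} $\rho$ has multiplier $\alpha\cdot\alpha^{-1}=1$; that is, $\rho$ is an ordinary linear representation of $G$ on $V\otimes(V')^*\cong\hom(V',V)$. Using \Cref{thm: character basics}(2) one checks that the character of $(\pi')^*$ is $\overline{\chi'}$, so that $\chi_\rho(g)=\chi(g)\overline{\chi'(g)}$ and hence $(\chi,\chi')=\frac1{|G|}\sum_{g\in G}\chi_\rho(g)$. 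The right-hand side is the dimension of the $G$-invariant subspace of $\rho$; a short computation (in which the two copies of $\alpha$ cancel, confirming that the induced action is genuinely linear) identifies this subspace with the space of intertwiners $\{T:\pi(g)T=T\pi'(g)\ \forall g\}$. By Schur's lemma this space is one-dimensional when $\pi\cong\pi'$ and zero otherwise, giving $(\chi,\chi')=\delta_{\pi\pi'}$.

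Next I would prove that the irreducible characters span $\mathbb H_\alpha$, by showing that any $\alpha$-class function $f$ orthogonal to all of them vanishes. For an irrep $(\pi,V,\alpha)$ I would introduce the operator $T_f:=\sum_{g\in G}\overline{f(g)}\,\pi(g)$. The key point is that, since $\alpha$ is unitary ($\overline\alpha=\alpha^{-1}$), the complex conjugate $\overline f$ transforms under conjugation with the \emph{inverse} factor to the one appearing in \eqref{eq: character conjugation}; combining this with the conjugation rule for $\pi$ in \eqref{eq: rep conjugation} and the composition identity $\lambda(h^{-1},g)=\lambda(h,h^{-1}gh)^{-1}$ for the factors $\lambda(h,g):=\alpha(h,h^{-1})/[\alpha(h,gh^{-1})\alpha(g,h^{-1})]$ shows that all cocycle factors cancel, so that $\pi(h)T_f=T_f\pi(h)$ for every $h$. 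By Schur's lemma $T_f=c\,\id_V$, and taking traces gives $c=|G|\,\overline{(f,\chi)}/\dim\pi$. Thus if $f$ is orthogonal to every irreducible character then $T_f=0$ on every irrep, hence --- by complete reducibility --- on every projective representation with multiplier $\alpha$, in particular on the left $\alpha$-regular representation $L$. Evaluating $\sum_{g}\overline{f(g)}\,L(g)$ on the basis vector $e_1$ yields $\sum_{g}\overline{f(g)}\,e_g=0$, forcing $f\equiv0$. Together with the linear independence implied by orthonormality, this shows the irreducible characters form an orthonormal basis of $\mathbb H_\alpha$.

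The main obstacle throughout is the bookkeeping of the multiplier: unlike the linear case, neither $\chi(g)\overline{\chi'(g)}$ as a representation character nor $\sum_g f(g)\pi(g)$ as an intertwiner behaves well on its own. The two devices that resolve this are (i) pairing $\pi$ with the \emph{dual} of $\pi'$, so that the multipliers $\alpha$ and $\alpha^{-1}$ annihilate and orthogonality becomes a statement about a genuine linear representation; and (ii) using $\overline f$ rather than $f$ in the completeness argument, so that the conjugation factors of $\overline f$ (inverted because $\alpha$ is unitary) cancel those produced by conjugating $\pi$. Verifying these cancellations --- i.e.\ checking the composition identity for the factors $\lambda$ and the unitarity relation $\overline\alpha=\alpha^{-1}$ --- is the only genuinely nontrivial computation.
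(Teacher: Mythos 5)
There is nothing in the paper to compare your argument against: \Cref{thm: orthonormality} is imported from Ref.~\cite{Cheng2015CharacterTheory} (Theorem 3.15) with only a citation, and the paper never proves it. Judged on its own, your proof is correct, and it only invokes ingredients the paper does make available. For orthonormality: $\rho=\pi\otimes(\pi')^*$ is a genuine linear representation by \Cref{thm: dual} and \Cref{thm: tensor product}, acting on $\hom(V',V)$ by $T\mapsto\pi(g)T\pi'(g)^{-1}$ (the multipliers cancel because $\pi'(g^{-1})=\alpha(g,g^{-1})\pi'(g)^{-1}$); the dual character is $\overline{\chi'}$ by \Cref{thm: character basics}(2); and the averaging projector plus Schur's lemma (cited in the paper as~\cite[Theorem 2.12]{Cheng2015CharacterTheory}, of which \Cref{thm: schur lemma} is the corollary actually stated) gives the dimension count. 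For completeness: the commutation $\pi(h)T_f=T_f\pi(h)$ does hold --- reindexing $g\mapsto h^{-1}gh$ in $\pi(h)T_f\pi(h)^{-1}=\sum_g\overline{f(g)}\,\beta_h(g)^{-1}\pi(hgh^{-1})$, using unitarity $\overline{\beta_{h^{-1}}(g)}=\beta_{h^{-1}}(g)^{-1}$ and your composition identity $\beta_h(h^{-1}gh)=\beta_{h^{-1}}(g)^{-1}$, which is precisely \Cref{thm: product rule} applied to the pair $(h^{-1},h)$ together with $\beta_1\equiv1$, makes all cocycle factors cancel --- and the remainder (Schur, complete reducibility, evaluation of $\sum_g\overline{f(g)}L(g)$ on $e_1$) is the classical completeness argument, transported verbatim. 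This is essentially the standard route one would expect the cited reference to take, so your proposal fills the gap the paper leaves by citation.

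One caveat is worth recording. Your $\delta_{\pi\pi'}$ refers to \emph{strict} isomorphism (an intertwiner with $\mu\equiv1$), which is what Schur's lemma detects; the paper's stated notion of equivalence permits twisting by a function $\mu$, and for two irreps with the same multiplier such a $\mu$ is forced to be a linear character of $G$. Hence irreps that are equivalent in the paper's sense can have different (and then mutually orthogonal) characters, and the proposition is literally true only when ``inequivalent'' is read as ``not strictly isomorphic'' (equivalently, ``with distinct characters'') --- already in the linear case $\alpha=1$, where the trivial and sign representations of $\mathbb{Z}/2$ are equivalent in the loose sense yet both characters are needed for a basis. Your proof establishes exactly this correct reading; the imprecision lies in the definitions as phrased in the paper, not in your argument.
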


\begin{corollary}
    \label{thm: characters equals classes}
    Let $\alpha$ be a unitary multiplier on $G$. Then $\dim\mathbb{H}_\alpha$ is equal to both the number of inequivalent projective irreps $(\pi,V,\alpha)$ of $G$ and the number of $\alpha$-regular conjugacy classes of $G$.
\end{corollary}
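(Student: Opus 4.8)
The plan is to establish the two claimed equalities separately. The first — that $\dim\mathbb{H}_\alpha$ equals the number of inequivalent projective irreps with multiplier $\alpha$ — is immediate from \Cref{thm: orthonormality}: that proposition states that the characters of inequivalent projective irreps with multiplier $\alpha$ form an orthonormal basis of $\mathbb{H}_\alpha$, and the cardinality of any basis equals the dimension. Since inequivalent unitary projective representations have distinct characters (characters determine such representations up to equivalence), the number of basis elements is exactly the number of inequivalent irreps, giving the first equality.

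For the second equality I would construct an explicit linear isomorphism $\Phi:\mathbb{H}_\alpha\to\C^m$, where $m$ is the number of $\alpha$-regular conjugacy classes. Fixing a representative $c^{(i)}_0$ of each $\alpha$-regular class $C^{(i)}$, let $\Phi$ send an $\alpha$-class function $f$ to the tuple $\big(f(c^{(1)}_0),\dots,f(c^{(m)}_0)\big)$. Linearity is clear, so the task reduces to showing $\Phi$ is a bijection.

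Injectivity follows from the structure noted just before \Cref{thm: regular classes}: every $f\in\mathbb{H}_\alpha$ vanishes on non-$\alpha$-regular elements, and on an $\alpha$-regular class its values are determined by the value at the representative through the conjugation factor $\beta$ of \Cref{thm: character basics}(3). Hence $f$ vanishing at all the $c^{(i)}_0$ forces $f\equiv0$. For surjectivity, given any target tuple I would define $f$ to be $0$ on non-$\alpha$-regular elements and, on each $\alpha$-regular class, by $f(hc^{(i)}_0h^{-1}):=\beta\big(c^{(i)}_0,\,hc^{(i)}_0h^{-1}\big)\,f(c^{(i)}_0)$, then verify that the function so defined genuinely lies in $\mathbb{H}_\alpha$.

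The main obstacle is this last verification, which splits into two points. First, $f$ must be well-defined: if $hc_0h^{-1}=h'c_0h'^{-1}$ then $k:=h'^{-1}h$ commutes with $c_0$, and the two prescriptions agree precisely because $\alpha$-regularity forces the $\beta$-factor attached to $k$ to be trivial — this is the uniqueness of $\beta$ on $\alpha$-regular conjugate pairs (\Cref{thm: beta uniqueness}). Second, the resulting $f$ must satisfy the defining identity of \Cref{def: class function} for \emph{all} pairs $(g,h)$, not only those involving the representative; this reduces to a composition law $\beta(c_0,h'gh'^{-1})=\beta(g,h'gh'^{-1})\,\beta(c_0,g)$ for $\beta$ along chains of conjugation, which I expect to follow from the $2$-cocycle identity for $\alpha$. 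Once both points are secured, $\Phi$ is an isomorphism and $\dim\mathbb{H}_\alpha=m$ equals the number of $\alpha$-regular classes, completing the proof.
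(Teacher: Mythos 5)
Your proposal is correct and takes essentially the same route as the paper, which states this corollary without proof as an immediate consequence of \Cref{thm: orthonormality} together with the evident parametrisation of $\alpha$-class functions by their values at representatives of the $\alpha$-regular classes (vanishing elsewhere). The two verifications you flag as the main obstacles are both already supplied by the paper: well-definedness of your extension $f$ is exactly \Cref{thm: beta uniqueness}, and the composition law $\beta\big(c_0,g\big)\beta\big(g,hgh^{-1}\big)=\beta\big(c_0,hgh^{-1}\big)$ is the corollary stated immediately after it (a consequence of \Cref{thm: product rule}), so nothing essential is missing.
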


\begin{lemma}[\cite{Cheng2015CharacterTheory}, Corollary 3.11]
    \label{thm: regular decomposition}
    Let $\alpha$ be a multiplier on $G$.
    \begin{enumerate}
        \item Every projective irrep of $G$ with multiplier $\alpha$ appears in the left and right $\alpha$-regular projective representations of $G$ with multiplicity equal to its degree.
        \item The degrees $d_i$ of inequivalent projective irreps with multiplier $\alpha$ satisfy $\sum_i d_i^2 = |G|$.
    \end{enumerate}
\end{lemma}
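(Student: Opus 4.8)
The plan is to carry over the classical proof that the regular representation contains every irrep with multiplicity equal to its degree, replacing the usual orthogonality of characters with \Cref{thm: orthonormality}. The one computation I need is the character of the left $\alpha$-regular representation $(L,V,\alpha)$. In the basis $\{e_g\}$ the operator $L(h)$ sends $e_g\mapsto\alpha(h,g)e_{hg}$, so its $(g,g)$ diagonal entry equals $\alpha(h,g)$ when $hg=g$ and vanishes otherwise. Since $hg=g$ forces $h=1$, and $\alpha(1,g)=1$, this gives $\chi_L(h)=|G|$ for $h=1$ and $\chi_L(h)=0$ for $h\neq1$. The identical computation for $(R,V,\alpha)$, using $R(h)e_g=\alpha(h,g^{-1})e_{gh^{-1}}$ and noting that $gh^{-1}=g$ again forces $h=1$, gives $\chi_R=\chi_L$. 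Thus both regular characters agree with the classical one, and it suffices to treat $L$.

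I would first dispose of the general multiplier by reducing to the unitary case, where \Cref{thm: orthonormality} is available. Given $\alpha$, pick a cohomologous unitary $\alpha'$ (\Cref{thm: unitary multipliers}); the equivalence of projective representations with cohomologous multipliers (\cite[Lemma 2.9]{Cheng2015CharacterTheory}) yields a degree-preserving bijection between the inequivalent $\alpha$-irreps and the $\alpha'$-irreps. A short direct check---conjugating $L$ by $\phi(e_g)=\mu(g)e_g$, where $\mu(1)=1$ and $\alpha'(x,y)/\alpha(x,y)=\mu(xy)/[\mu(x)\mu(y)]$---shows that the left $\alpha$-regular representation is equivalent to the left $\alpha'$-regular representation, with $\mu$ itself playing the role of the scalar function in the definition of equivalence. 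Because an equivalence carries invariant-subspace decompositions to invariant-subspace decompositions, it preserves multiplicities, so it is enough to prove part (1) for unitary $\alpha$.

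For unitary $\alpha$, complete reducibility (\cite[Theorem 2.3]{Cheng2015CharacterTheory}) writes $L\cong\bigoplus_i m_i\pi_i$ over the inequivalent irreps $(\pi_i,V_i,\alpha)$, whence $\chi_L=\sum_i m_i\chi_i$. Orthonormality then extracts $m_i=(\chi_L,\chi_i)=\frac1{|G|}\sum_{g}\chi_L(g)\overline{\chi_i(g)}=\overline{\chi_i(1)}=d_i$, using $\chi_i(1)=\dim\pi_i=d_i\in\mathbb{Z}_{>0}$ (\Cref{thm: character basics}). This is part (1). Part (2) is then immediate: taking dimensions in $L\cong\bigoplus_i d_i\pi_i$ gives $|G|=\dim V=\sum_i d_i\dim V_i=\sum_i d_i^2$, and since this identity involves only the multiset of degrees, which the cohomology bijection preserves, it holds for every multiplier $\alpha$.

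The calculation is routine throughout; the only step needing genuine care is the reduction to a unitary multiplier---specifically, verifying that the $\alpha$-regular representation is sent to the $\alpha'$-regular representation (and not to some other representation with multiplier $\alpha'$) by the cohomology equivalence. I expect threading $\mu$ correctly through the definitions of $L$ and of equivalence to be the main, if modest, obstacle.
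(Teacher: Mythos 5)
Your proof is correct, and it is essentially the argument this paper intends: the paper itself does not prove the lemma (it imports it from \cite{Cheng2015CharacterTheory}, Corollary 3.11), but the computation you use --- $\chi_L(1)=|G|$, $\chi_L(h)=0$ for $h\neq 1$, followed by extracting multiplicities $(\chi_L,\chi_i)=d_i$ via \Cref{thm: orthonormality} --- is exactly the argument the paper invokes later when it writes ``cf.\ the proof of \Cref{thm: regular decomposition}'' in establishing \Cref{thm: projective irreps from linear irreps}. Your reduction to a unitary multiplier via the explicit equivalence $\phi(e_g)=\mu(g)e_g$ is a sound and genuinely necessary piece of care, since \Cref{thm: orthonormality} is only stated for unitary $\alpha$ while the lemma is asserted for arbitrary multipliers.
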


\section{Burnside's algorithm for projective irrep characters}
\label{sec: burnside}

We start this section by proving some lemmas in~\S\ref{sec: class factors} about Schur multipliers that need to hold for a consistent definition of projective representations and $\alpha$-class functions.
The strategy we then take to prove \Cref{thm: burnside} is rather similar to the case of linear representations~\cite{Burnside1911}.
First, using the results of \S\ref{sec: class factors}, we construct a linear combination of $\pi(g)$ in each given conjugacy class, $F(g)$, which is proportional to the identity (\Cref{thm: conjugacy class sum}). 
The matrices $M$ in~\eqref{eq: burnside M matrix} are then obtained by multiplying pairs of these.

\subsection{Consistency relations for $\alpha$-class functions}
\label{sec: class factors}

\begin{definition}
    Let $\alpha$ be a multiplier on $G$ and let $g,h\in G$.
    We shall call the expressions
    \begin{equation*}
        \beta_h(g) := \frac{\alpha(h,h^{-1})}{\alpha(h,gh^{-1})\alpha(g,h^{-1})} = \frac{\alpha(h,h^{-1})}{\alpha(h,g)\alpha(hg,h^{-1})}
    \end{equation*}
    \textit{class factors.}
\end{definition}

The fact that~\eqref{eq: rep conjugation} holds consistently across a conjugacy class puts constraints on these class factors:

\begin{lemma}
    \label{thm: product rule}
    Let $\alpha$ be a multiplier on $G$. Then for all $x,h,k\in G$, the corresponding class factors satisfy $\beta_h(x)\beta_k(hxh^{-1}) = \beta_{kh}(x)$.
\end{lemma}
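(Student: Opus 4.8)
The plan is to deduce this multiplicative identity for class factors from the way they control conjugation in \emph{any} projective representation with multiplier $\alpha$. By \eqref{eq: rep conjugation}, for every projective representation $(\pi,V,\alpha)$ and all $g,h\in G$ we have $\pi(hgh^{-1}) = \beta_h(g)\,\pi(h)\pi(g)\pi(h)^{-1}$, where the scalar $\beta_h(g)$ is exactly the class factor of the lemma. Since the left $\alpha$-regular representation exists for every multiplier $\alpha$, I may fix one such $\pi$ and work with it; the operators $\pi(g)$ are invertible by definition, so nothing I cancel will vanish.

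First I would apply \eqref{eq: rep conjugation} in two stages. Conjugating $x$ by $h$ gives $\pi(hxh^{-1}) = \beta_h(x)\,\pi(h)\pi(x)\pi(h)^{-1}$; substituting this into the conjugation of the result by $k$ yields
\[
    \pi\big(k(hxh^{-1})k^{-1}\big) = \beta_k(hxh^{-1})\beta_h(x)\,\pi(k)\pi(h)\pi(x)\pi(h)^{-1}\pi(k)^{-1}.
\]
On the other hand, $k(hxh^{-1})k^{-1} = (kh)x(kh)^{-1}$, so a single application of \eqref{eq: rep conjugation} gives $\pi\big((kh)x(kh)^{-1}\big) = \beta_{kh}(x)\,\pi(kh)\pi(x)\pi(kh)^{-1}$.

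The crux is to identify the two operator factors. Using $\pi(k)\pi(h) = \alpha(k,h)\pi(kh)$, hence $\pi(kh)^{-1} = \alpha(k,h)\,\pi(h)^{-1}\pi(k)^{-1}$, the scalar $\alpha(k,h)$ cancels between the left and right factors and
\[
    \pi(kh)\pi(x)\pi(kh)^{-1} = \pi(k)\pi(h)\pi(x)\pi(h)^{-1}\pi(k)^{-1}.
\]
Comparing the two expressions for $\pi\big((kh)x(kh)^{-1}\big)$ and cancelling this common invertible operator gives $\beta_{kh}(x) = \beta_h(x)\beta_k(hxh^{-1})$, as claimed.

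I do not expect a genuine obstacle here: the only points demanding care are confirming that a projective representation with the given multiplier exists (so the argument is not vacuous) and checking that the $\alpha(k,h)$ scalars cancel exactly as above. A direct verification starting from the two equivalent forms of $\beta$ and the cocycle relation of \Cref{def: multiplier} is also possible, but it is a considerably longer symbolic manipulation that this representation-theoretic argument sidesteps.
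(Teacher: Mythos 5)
Your proof is correct, but it takes a genuinely different route from the paper. The paper proves \Cref{thm: product rule} by a direct cocycle computation: it expands $\beta_{kh}(x)$ and repeatedly applies the identity of \Cref{def: multiplier}(1) to factor it into $\beta_h(x)\beta_k(hxh^{-1})$ times a leftover ratio of $\alpha$'s, which a final application of the cocycle identity shows to equal $1$ --- exactly the ``considerably longer symbolic manipulation'' you chose to sidestep. Your argument instead transfers the question to representation theory: you instantiate \eqref{eq: rep conjugation} in a concrete projective representation with multiplier $\alpha$ (the left $\alpha$-regular representation, whose existence the paper establishes for arbitrary multipliers), conjugate in two stages versus one, observe that the operator $\pi(kh)\pi(x)\pi(kh)^{-1}=\pi(k)\pi(h)\pi(x)\pi(h)^{-1}\pi(k)^{-1}$ because the scalars $\alpha(k,h)$ and $\alpha(k,h)^{-1}$ cancel between $\pi(kh)$ and $\pi(kh)^{-1}$, and then cancel this invertible (hence nonzero) operator to equate the scalars. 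All ingredients you invoke appear in the paper before this lemma, so there is no circularity; your care about non-vacuousness (existence of a representation) and the exact scalar cancellation addresses the only two places the argument could break. What each approach buys: yours is shorter, conceptually transparent, and nearly impossible to get wrong; the paper's computation establishes the identity purely at the level of the multiplier, independent of any representation, which fits the framing of \S\ref{sec: class factors} as consistency relations that the cocycle itself must satisfy (and, as the remark after \Cref{thm: beta uniqueness} notes, that are forced if \eqref{eq: character conjugation} is to hold consistently) rather than facts inherited from representations.
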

\begin{proof}
    The claim follows from a straightforward if tedious calculation:
    \begin{align*}
        \beta_{kh}(x) &= \frac{\alpha(kh,h^{-1}k^{-1})}{\alpha(kh,xh^{-1}k^{-1})\alpha(x,h^{-1}k^{-1})}\\
        \alpha(kh,h^{-1}k^{-1}) &= 
        \frac{\alpha(k,k^{-1})\alpha(kh,h^{-1})}{\alpha(h^{-1},k^{-1})} = 
        \frac{\alpha(k,k^{-1})\alpha(h,h^{-1})}{\alpha(h^{-1},k^{-1})\alpha(k,h)}\\
        \alpha(kh,xh^{-1}k^{-1}) &= 
        \frac{\alpha(k,hxh^{-1}k^{-1})\alpha(h,xh^{-1}k^{-1})}{\alpha(k,h)} = 
        \frac{\alpha(k,hxh^{-1}k^{-1})\alpha(h,xh^{-1})\alpha(hxh^{-1},k^{-1})}{\alpha(k,h)\alpha(xh^{-1},k^{-1})}\\
        \alpha(x,h^{-1}k^{-1}) &= 
        \frac{\alpha(xh^{-1},k^{-1})\alpha(x,h^{-1})}{\alpha(h^{-1},k^{-1})} = 
        \frac{\alpha(h^{-1},hxh^{-1}k^{-1})\alpha(hxh^{-1},k^{-1})\alpha(x,h^{-1})}{\alpha(h^{-1},hxh^{-1})\alpha(h^{-1},k^{-1})}\\
        \therefore \beta_{kh}(x) &= \frac{\alpha(h,h^{-1})}{\alpha(h,xh^{-1})\alpha(x,h^{-1})} \frac{\alpha(k,k^{-1})}{\alpha(k,hxh^{-1}k^{-1})\alpha(hxh^{-1},k^{-1})} \frac{\alpha(xh^{-1},k^{-1})\alpha(h^{-1},hxh^{-1})}{\alpha(h^{-1},hxh^{-1}k^{-1})\alpha(hxh^{-1},k^{-1})}.
    \end{align*}
    The first two terms are $\beta_h(x)$ and $\beta_k(hxh^{-1})$, respectively.
    Substituting $x=h^{-1}, y=hxh^{-1}, z=k^{-1}$ into~\Cref{def: multiplier}(2), we find that the last term is 1.
\end{proof}

\begin{lemma}
    \label{thm: beta uniqueness}
    If $g\in G$ is $\alpha$-regular, the class factor $\beta_h(g)$ is only a function of the conjugate elements $g$ and $g'=hgh^{-1}$, but not $h$. In this case, we shall write $\beta(g,g') :=\beta_h(g)$.
\end{lemma}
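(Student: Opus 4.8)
The plan is to show that if two elements $h_1,h_2\in G$ both conjugate $g$ to the same element, i.e.\ $g'=h_1 g h_1^{-1}=h_2 g h_2^{-1}$, then $\beta_{h_1}(g)=\beta_{h_2}(g)$; this is precisely the assertion that $\beta_h(g)$ depends only on the pair $(g,g')$ and not on the choice of conjugating element $h$. The two ingredients I expect to need are the cocycle product rule \Cref{thm: product rule} and the hypothesis that $g$ is $\alpha$-regular.

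First I would convert the equality $h_1 g h_1^{-1}=h_2 g h_2^{-1}$ into a commutation statement: setting $k:=h_2^{-1}h_1$, this equality rearranges to $k g k^{-1}=g$, so $k$ commutes with $g$. Since $g$ is $\alpha$-regular, the definition of $\alpha$-regularity then forces $\beta_k(g)=1$.

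Next I would feed the factorisation $h_1=h_2 k$ into \Cref{thm: product rule}. Taking the inner conjugating element to be $k$ and the outer one to be $h_2$ (so that the composite $h_2 k$ equals $h_1$), the product rule reads $\beta_k(g)\,\beta_{h_2}(k g k^{-1})=\beta_{h_2 k}(g)=\beta_{h_1}(g)$. Using $kgk^{-1}=g$ on the left, together with $\beta_k(g)=1$ from the previous step, collapses this to $\beta_{h_2}(g)=\beta_{h_1}(g)$, which is the desired conclusion. One may then unambiguously write $\beta(g,g'):=\beta_h(g)$.

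The only real obstacle is bookkeeping: I must invoke \Cref{thm: product rule} with the conjugating elements assigned in the correct order, so that the commuting factor $k$ lands in the $\beta_k(\cdot)$ slot where $\alpha$-regularity can be applied, and so that its argument simplifies to $g$ via $kgk^{-1}=g$. Once the factors are assigned correctly the computation is immediate, and no further cocycle identities are required beyond those already packaged into \Cref{thm: product rule}.
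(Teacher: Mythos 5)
Your proof is correct and is essentially identical to the paper's: both factor one conjugating element as the other times a centraliser element (your $k=h_2^{-1}h_1$, the paper's $c=h^{-1}k$), apply \Cref{thm: product rule} to that factorisation, and use $\alpha$-regularity to set the centraliser factor to $1$. The bookkeeping of which element goes in which slot of the product rule is handled correctly.
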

\begin{proof}
    Let $h,k\in G$ such that $g' = hgh^{-1} = kgk^{-1}$. Then $h^{-1}kgk^{-1}h = g$, so $k=hc$, where $c\in C_G(g)$. \Cref{thm: product rule} implies $\beta_c(g) \beta_h(g) = \beta_k(g)$, and since $g$ is $\alpha$-regular, $\beta_h(g) = \beta_k(g)$.
\end{proof}

\begin{corollary}
    Let $g,g',g''\in G$ be in the same $\alpha$-regular conjugacy class. Then the class factors satisfy $\beta(g,g')\beta(g',g'') = \beta(g,g'')$.
\end{corollary}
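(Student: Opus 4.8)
The plan is to deduce this directly from \Cref{thm: product rule}, using the well-definedness of $\beta(\cdot,\cdot)$ established in \Cref{thm: beta uniqueness}. Since $g,g',g''$ lie in the same conjugacy class, I would first fix elements $h,k\in G$ with $g'=hgh^{-1}$ and $g''=kg'k^{-1}$. Composing the two conjugations gives $g''=k(hgh^{-1})k^{-1}=(kh)g(kh)^{-1}$, so the single conjugator $kh$ carries $g$ to $g''$.

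Next I would apply \Cref{thm: product rule} with $x=g$ and the chosen $h,k$. This reads $\beta_h(g)\,\beta_k(hgh^{-1})=\beta_{kh}(g)$, that is, $\beta_h(g)\,\beta_k(g')=\beta_{kh}(g)$. Because $g$ is $\alpha$-regular (and hence so are $g'$ and $g''$ by \Cref{thm: regular classes}), \Cref{thm: beta uniqueness} guarantees that each class factor depends only on its source and target element and not on the conjugator. I may therefore identify $\beta_h(g)=\beta(g,g')$, $\beta_k(g')=\beta(g',g'')$ (as $k$ conjugates $g'$ to $g''$), and $\beta_{kh}(g)=\beta(g,g'')$ (as $kh$ conjugates $g$ to $g''$). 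Substituting these into the product rule yields $\beta(g,g')\beta(g',g'')=\beta(g,g'')$, as claimed.

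There is essentially no substantive obstacle here; the only point requiring a moment's care is the order of composition of the conjugations, namely confirming that it is $kh$ rather than $hk$ that carries $g$ to $g''$, so that the subscript $kh$ appearing in \Cref{thm: product rule} matches the correct conjugator. The appeal to $\alpha$-regularity is precisely what makes the two-argument notation $\beta(g,g'')$ unambiguous, so that the identifications above are legitimate irrespective of the chosen conjugators.
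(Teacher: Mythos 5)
Your proof is correct and matches the paper's intent exactly: the corollary is stated there without proof precisely because it follows, as you show, by applying \Cref{thm: product rule} with conjugators $h$ and $k$ (noting $g''=(kh)g(kh)^{-1}$) and then invoking \Cref{thm: beta uniqueness} to replace each $\beta_h(g)$, $\beta_k(g')$, $\beta_{kh}(g)$ with the conjugator-independent $\beta(g,g')$, $\beta(g',g'')$, $\beta(g,g'')$. Your attention to the order of composition ($kh$, not $hk$) is the right point of care, and no gap remains.
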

\begin{remark}
    This must be the case in order for~\eqref{eq: character conjugation} to hold consistently for nonzero characters.
\end{remark}

\subsection{Proof of \Cref{thm: burnside}}

\begin{lemma}
    \label{thm: conjugacy class sum}
    Let $g\in G$. Let $(\pi, V, \alpha)$ be projective irrep of $G$  with characters $\chi$, and let $\beta$ be the class factors corresponding to $\alpha$. Then
    \begin{equation}
        F(g):= \frac1{|G|}\sum_{h\in G} \frac{\pi(hgh^{-1})}{\beta_h(g)} = \frac{\chi(g)}{\dim V}\id_V. 
        \label{eq: conjugacy class sum}
    \end{equation}
\end{lemma}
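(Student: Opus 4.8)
The plan is to recognise that the class factor $\beta_h(g)$ is precisely the scalar appearing in the conjugation formula~\eqref{eq: rep conjugation}, so that dividing by it strips away all of the multiplier twisting. Concretely, \eqref{eq: rep conjugation} reads $\pi(hgh^{-1}) = \beta_h(g)\,\pi(h)\pi(g)\pi(h)^{-1}$, whence $\pi(hgh^{-1})/\beta_h(g) = \pi(h)\pi(g)\pi(h)^{-1}$ for every $h\in G$. Substituting this into the definition of $F(g)$ collapses the twisted sum into the genuine conjugation average $F(g) = \frac1{|G|}\sum_{h\in G}\pi(h)\pi(g)\pi(h)^{-1}$. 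Note that this step is purely formal and does not require $g$ to be $\alpha$-regular; the identity~\eqref{eq: rep conjugation} holds for all $g,h$.

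Having done this, I would identify the right-hand side as the Schur average $\langle \pi(g)\rangle_\pi$ and apply the corollary of Schur's lemma with $f=\pi(g)$. This yields $F(g) = \langle \pi(g)\rangle_\pi = \tr(\pi(g))/(\dim V)\,\id_V = \chi(g)/(\dim V)\,\id_V$, which is exactly the claimed identity. Irreducibility of $\pi$ enters precisely here, as it is what licenses the invocation of \Cref{thm: schur lemma}.

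The one point that needs care is matching $\frac1{|G|}\sum_h \pi(h)\pi(g)\pi(h)^{-1}$ to the exact form in which \Cref{thm: schur lemma} is stated, namely $\frac1{|G|}\sum_h \pi(h)^{-1}f\pi(h)$. I would bridge the gap by reindexing $h\mapsto h^{-1}$ and using $\pi(h^{-1}) = \alpha(h,h^{-1})\pi(h)^{-1}$: the factor $\alpha(h,h^{-1})$ coming from $\pi(h^{-1})^{-1}$ cancels against the one coming from $\pi(h^{-1})$, leaving $\pi(h)\pi(g)\pi(h)^{-1}$ summed over $h$. (Equivalently, one may read the reshuffled form \Cref{thm: schur shuffled} after the same cancellation.) This multiplier bookkeeping is the only, and quite minor, obstacle; everything else is an immediate consequence of~\eqref{eq: rep conjugation} together with Schur's lemma.
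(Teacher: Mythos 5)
Your proposal is correct and takes essentially the same route as the paper: collapse the twisted sum via \eqref{eq: rep conjugation} into $\frac1{|G|}\sum_{h\in G}\pi(h)\pi(g)\pi(h)^{-1}$, recognise it as $\langle\pi(g)\rangle_\pi$, and apply Schur's lemma. The reindexing $h\mapsto h^{-1}$ with the cancelling $\alpha(h,h^{-1})$ factors that you spell out is exactly the content of \Cref{thm: schur shuffled}, which the paper invokes directly.
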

\begin{proof}
    By~\eqref{eq: rep conjugation}, 
    \begin{equation}
        F(g) = \frac1{|G|} \sum_{h\in G} \pi(h)\pi(g)\pi(h)^{-1} = \langle \pi(g)\rangle_\pi = \frac{\chi(g)}{\dim V}\id_V;
        \label{eq: F matrix product form}
    \end{equation}
    the last equality holds due to \Cref{thm: schur shuffled}.
\end{proof}
\begin{remark}
    If $g$ and $g'$ are in the same $\alpha$-regular conjugacy class $C$, the above can be written as
    \begin{align*}
        F(g) &= \frac1{|C|} \sum_{g'\in C} \frac{\pi(g')}{\beta(g,g')} = \frac{\chi(g)}d\id.
    \end{align*}
    If $g$ is not $\alpha$-regular, $F(g) = 0$.
\end{remark}

\begin{proof}[Proof of \Cref{thm: burnside}]
    By \Cref{thm: conjugacy class sum},
    \begin{align}
        \frac1{d^2}|A||B|\chi(a_0)\chi(b_0)\id = |A||B| F(a_0)F(b_0) = \sum_{a\in A}\sum_{b\in B} \frac{\pi(a)}{\beta(a_0,a)} \frac{\pi(b)}{\beta(b_0,b)}.
        \label{eq: class homothety product}
    \end{align}
    We would like to split this sum into terms of the form~\eqref{eq: conjugacy class sum}, so we can express it as a linear combination of the representative $\chi(c_0)$ of the several conjugacy classes. To do so, it is convenient to introduce an additional sum over $h\in G$ and reparametrise the sums over $a\in A,b\in B$ as $a\mapsto hah^{-1}, b\mapsto hbh^{-1}$:
    \begin{align*}
        |A||B| F(a_0)F(b_0) &= 
        \frac1{|G|}\sum_{a\in A}\sum_{b\in B} \sum_{h\in G} \frac{\pi(hah^{-1})}{\beta(a_0,a)\beta_h(a)} \frac{\pi(hbh^{-1})}{\beta(b_0,b)\beta_h(b)} \qquad\text{(\Cref{thm: product rule})}\\
        &= \frac1{|G|}\sum_{a\in A}\sum_{b\in B} \frac1{\beta(a_0,a)\beta(b_0,b)} \sum_{h\in G} \pi(h)\pi(a)\pi(h)^{-1} \times \pi(h)\pi(b)\pi(h)^{-1} \qquad\eqref{eq: rep conjugation}\\
        &= \frac1{|G|}\sum_{a\in A}\sum_{b\in B} \frac{\alpha(a,b)}{\beta(a_0,a)\beta(b_0,b)} \sum_{h\in G} \pi(h)\pi(ab)\pi(h)^{-1} \\
        &= \frac{\id}d\sum_{a\in A}\sum_{b\in B} \frac{\alpha(a,b)\chi(ab)}{\beta(a_0,a)\beta(b_0,b)}\qquad\eqref{eq: F matrix product form} \\
        &= \frac{\id}d \sum_C \chi(c_0)\sum_{\substack{a\in A \\ b\in B\\ab\in C}}\frac{\beta(c_0,ab)}{\beta(a_0,a)\beta(b_0,b)}\alpha(a,b).\qquad\text{(\Cref{thm: character basics}(3))}
    \end{align*}
    Note that $\chi(ab)=0$ if $ab$ is not $\alpha$-regular, so the sum over $C$ can be taken to run over only $\alpha$-regular classes. Comparing the two ends of the chain of equations and inserting~\eqref{eq: burnside M matrix}, we get
    \begin{align}
        \sum_C \sqrt{|B||C|} (M_A)_{BC} \chi(c_0) &= \frac{|A|\chi(a_0)}d |B|\chi(b_0) \nonumber\\
        \sum_C (M_A)_{BC} \frac{\sqrt{|C|}\chi(c_0)}{\sqrt{|G|}} &= \frac{|A|\chi(a_0)}d \frac{\sqrt{|B|}\chi(b_0)}{\sqrt{|G|}},
        \label{eq: eigenvalue equation components}
    \end{align}
    which is precisely~\eqref{eq: burnside eigenvalue equation} written out in components. This concludes the proof of part (1).

    By \Cref{thm: characters equals classes}, there are $m$ inequivalent projective irreps with characters $\chi_i$ $(1\le i \le m)$, which give rise to $m$ linearly independent eigenvectors $v_i$ of $M_A$ by the process above. Since $m\times m$ matrices can have up to $m$ linearly independent eigenvectors, $M_A$ has no eigenvectors linearly independent of these. 
    The eigenvalues associated with eigenvector $v_i$ are $\big(|C^{(1)}|\chi_i\big(c^{(1)}_0\big)/d_i, \dots, \big(|C^{(m)}|\chi_i\big(c^{(m)}_0\big)/d_i\big)$: by \Cref{thm: orthonormality}, these vectors are linearly independent, so no linear combination of more than one $v_i$ can be a joint eigenvector of all the $M_A$.
    
    That is, all the $M_A$ share all their eigenvectors $v_i$ ($1\le i\le m$), which are of the form given in~\eqref{eq: burnside eigenvalue equation}. This implies that all the $M_A$ commute. Finally, by \Cref{thm: orthonormality}, the eigenvectors $v_i$ are orthonormal under the usual inner product on $\C^m$, therefore, the $M_A$ are normal.
\end{proof}
\begin{remarks}
    \begin{enumerate}
        \item In the special case $\alpha(g,h)=1$ for all $g,h\in G$, we recover the standard Burnside algorithm.
        \item Since the eigenvectors in Eq.~\eqref{eq: burnside eigenvalue equation} are orthonormal under the usual inner product, the columns of the character table are orthogonal too:
        \begin{equation*}
            \sum_{i=1}^m \chi_i\big(c_0^{m}\big) \overline{\chi_j\big(c_0^{m}\big)} = \frac{|G|}{|C^{(i)}|}\delta_{ij}.
        \end{equation*}
    \end{enumerate}
\end{remarks}

\section{Dixon's algorithm for computing exact characters}
\label{sec: dixon character table}

\subsection{Brief review of Dixon's character table algorithm}

\Cref{thm: burnside} gives a straightforward algorithm to compute projective irrep characters, which is quite well suited for floating-point numerical computation.
However, solving the eigenvalue problem analytically becomes a challenge already for rather small groups, limiting its usefulness for theoretical applications.

Dixon's character table algorithm~\cite{Dixon1967} sidesteps the issue by transposing the eigenvalue problem~\eqref{eq: burnside eigenvalue equation} on $\C$ to an appropriately chosen finite field. This is possible because every character of $G$ is the sum of $e$th roots of unity, where $e$ is the exponent of the group, i.e., the least common multiple of the orders of all $g\in G$: In a finite field with a primitive $e$th root of unity $z$, this $z$ can play the role of the primitive $e$th root of unity in $\C$.
\begin{lemma}
    \label{thm: Zzeta to Zp}
    Let $e$ be a positive integer and let $\zeta$ be a primitive $e$th root of unity. Let $p$ be a prime such that $e$ divides $p-1$ and let $z$ be a primitive $e$th root of unity in the field $\Zp$. Then
    \begin{equation*}
        \theta:\Zzeta\to\Zp,\quad f(\zeta)\mapsto f(z)\bmod p\ \text{for all polynomials $f(x)\in \mathbb{Z}[x]$}
    \end{equation*}
    is a well-defined ring homomorphism.
\end{lemma}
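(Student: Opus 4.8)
The plan is to recognise $\Zzeta$ as the quotient ring $\mathbb{Z}[x]/(\Phi_e)$, where $\Phi_e$ denotes the $e$th cyclotomic polynomial, and to exhibit $\theta$ as the map induced on this quotient by evaluation at $z$. The evaluation map $\mathrm{ev}_z\colon \mathbb{Z}[x]\to\Zp$, $f(x)\mapsto f(z)\bmod p$, is manifestly a ring homomorphism, and the assertion to be proved is precisely that $\mathrm{ev}_z$ descends to $\Zzeta$. Since the presentation of an element of $\Zzeta$ as $f(\zeta)$ for $f\in\mathbb{Z}[x]$ is not unique, the only substantive point is well-definedness: whenever $f(\zeta)=g(\zeta)$ in $\Zzeta$ we must have $f(z)\equiv g(z)\pmod p$. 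Once this is secured, the ring-homomorphism property of $\theta$ is inherited from $\mathrm{ev}_z$ at no extra cost.

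To handle well-definedness I would first identify the kernel of $\mathbb{Z}[x]\to\Zzeta$, $x\mapsto\zeta$. Because $\Phi_e$ is monic with integer coefficients and is the minimal polynomial of $\zeta$ over $\mathbb{Q}$ (irreducible of degree $\varphi(e)$), the division algorithm in $\mathbb{Z}[x]$ shows that $f(\zeta)=0$ for $f\in\mathbb{Z}[x]$ if and only if $\Phi_e\mid f$ in $\mathbb{Z}[x]$. Hence $f(\zeta)=g(\zeta)$ is equivalent to $\Phi_e\mid (f-g)$, and it suffices to establish the single congruence
\[
    \Phi_e(z)\equiv 0 \pmod p ,
\]
since then $\Phi_e\mid(f-g)$ immediately forces $f(z)\equiv g(z)\pmod p$.

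The crux — and the one genuinely non-formal step — is this congruence. Over $\Zp$ one has the factorisation $x^e-1=\prod_{d\mid e}\Phi_d(x)$ inherited from $\mathbb{Z}[x]$. Because $e\mid p-1$ forces $e\le p-1<p$, we have $p\nmid e$, so the derivative $ex^{e-1}$ is coprime to $x^e-1$ in $\Zp[x]$; thus $x^e-1$ is separable over $\Zp$, its factors $\Phi_d$ are pairwise coprime, and the roots of $\Phi_d$ in $\overline{\Zp}$ are exactly the elements of multiplicative order $d$. The hypothesis that $z$ is a primitive $e$th root of unity means $z$ has order exactly $e$ in $\Zp^\times$, so $z$ is a root of $x^e-1$ but, since $z^d\neq 1$ for every proper divisor $d\mid e$, of no factor $\Phi_d$ with $d<e$. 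By separability $z$ lies in exactly one cyclotomic factor, which must therefore be $\Phi_e$, giving $\Phi_e(z)=0$ in $\Zp$.

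I expect this separability/order-matching argument to be the only real obstacle; the remainder is the standard identification $\Zzeta\cong\mathbb{Z}[x]/(\Phi_e)$ together with the formal fact that a ring homomorphism out of $\mathbb{Z}[x]$ annihilating $\Phi_e$ factors through the quotient. I would conclude by observing that $\mathrm{ev}_z$ kills the ideal $(\Phi_e)$ and hence induces a well-defined ring homomorphism $\theta\colon \Zzeta\cong\mathbb{Z}[x]/(\Phi_e)\to\Zp$, which is exactly the map in the statement.
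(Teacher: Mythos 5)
Your proof is correct and takes essentially the same approach as the paper: identify $\Zzeta$ with $\mathbb{Z}[x]/(\Phi_e)$, reduce well-definedness to $\Phi_e \mid (f-g)$ in $\mathbb{Z}[x]$ (using that $\Phi_e$ is monic), and conclude from $\Phi_e(z)\equiv 0 \pmod p$. The only difference is that you prove $\Phi_e(z)\equiv 0 \pmod p$ in detail via the factorisation $x^e-1=\prod_{d\mid e}\Phi_d(x)$ and the order of $z$, a step the paper asserts directly from $z$ being a primitive $e$th root of unity.
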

\begin{proof}
    We can view $\Zzeta$ as the quotient ring $\mathbb{Z}[x]/\Phi_e(x)$, where $\Phi_e(x)$ is the minimal polynomial of $\zeta$, the $e$th cyclotomic polynomial. $\theta$ is trivially a ring homomorphism from $\mathbb{Z}[x]$ to $\Zp$. What is left to prove is that it is a well-defined function $\Zzeta\to\Zp$, i.e., that $f(\zeta)=g(\zeta) \implies f(z)=g(z)\pmod p$ for all one-variable polynomials $f,g$ with integer coefficients. By the definition of the minimal polynomial,  $f(\zeta)=g(\zeta)$ implies that $\Phi_e$ divides $f-g$. Since cyclotomic polynomials are monic, this quotient has integer coefficients, i.e., $f = g + h\Phi_e$ for some polynomial $h\in\mathbb{Z}[x]$. Then, since $z$ is a primitive $e$th root of unity in $\Zp$, $\Phi_e(z)=0\pmod p$, so $f(z)=g(z)\pmod p$, as required.
\end{proof}
\begin{proof}[Remark]
    \Cref{thm: dixon character table}(1) is a special case of this result.
\end{proof}

This allows us to rewrite the eigenvalue problem in Burnside's algorithm in $\Zp$, where it can be solved using exact integer arithmetic. We can then transpose the result back to $\C$ using a discrete Fourier transform:
\begin{lemma}
    \label{thm: fourier}
    We use the notation of \Cref{thm: Zzeta to Zp}. Let $m_k$ be an integer for all $0\le k<e$ and let $\chi_j = \sum_{k=0}^{e-1} m_k z^{jk} \pmod p$ for all $0\le j < e$. Then
    $m_k = e^{-1}\sum_{j=0}^{e-1} \chi_j z^{-jk}\pmod p$.
\end{lemma}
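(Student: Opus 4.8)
The plan is to verify directly that the proposed inverse formula recovers $m_k$, which reduces to establishing the usual discrete-Fourier orthogonality relation, now over the finite field $\Zp$ rather than over $\C$. First I would substitute the definition $\chi_j = \sum_{k'=0}^{e-1} m_{k'} z^{jk'}$ into the right-hand side $e^{-1}\sum_{j=0}^{e-1}\chi_j z^{-jk}$ and interchange the two finite sums. This gives
\[
    e^{-1}\sum_{k'=0}^{e-1} m_{k'} \sum_{j=0}^{e-1} z^{j(k'-k)} \pmod p,
\]
so everything comes down to evaluating the inner geometric sum $S_{k',k} := \sum_{j=0}^{e-1} z^{j(k'-k)}$ in $\Zp$.

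The key step is to show $S_{k',k} = e\,\delta_{k'k}$ modulo $p$. When $k'=k$ every summand equals $z^0=1$, so $S_{k,k}=e$. When $k'\neq k$ we have $0<|k'-k|<e$; since $z$ is a \emph{primitive} $e$th root of unity in $\Zp$, this forces $z^{k'-k}\neq 1$, and because $\Zp$ is a field the element $z^{k'-k}-1$ is invertible. The finite geometric series then evaluates to
\[
    S_{k',k} = \frac{(z^{e})^{k'-k}-1}{z^{k'-k}-1} = 0,
\]
using $z^e=1$ in the numerator. Substituting $S_{k',k}=e\,\delta_{k'k}$ back collapses the outer sum to the single term $k'=k$, yielding $e^{-1}\cdot e\cdot m_k = m_k$. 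Here I use that $e$ is invertible modulo $p$: since $e\mid p-1$ we have $1\le e\le p-1<p$, so $e$ is coprime to the prime $p$ and $e^{-1}$ exists in $\Zp$.

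The argument is entirely elementary, so I do not expect a genuine obstacle beyond bookkeeping the index ranges. The one point that actually carries the proof is the primitivity of $z$: it is this hypothesis, rather than merely $z^e=1$, that guarantees $z^{k'-k}\neq 1$ for $0<|k'-k|<e$ and hence makes the off-diagonal sums vanish; were $z$ of smaller multiplicative order, some of these sums would survive and the transform would not be invertible. The distinction between equality of integers $k'=k$ and equality of residues is immaterial, since all indices lie in $\{0,\dots,e-1\}$ and the difference of two such integers is divisible by $e$ exactly when they coincide.
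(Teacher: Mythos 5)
Your proposal is correct and follows essentially the same route as the paper: substitute the definition, swap the sums, and reduce everything to the orthogonality relation $\sum_{j=0}^{e-1} z^{j(k'-k)} = e\,\delta_{k'k} \pmod p$, with invertibility of $e$ modulo $p$ noted at the end. The only cosmetic difference is that the paper kills the off-diagonal sums via the shift identity $z^{k}S(z^{k}) = S(z^{k})$, whereas you use the closed-form geometric series $\bigl(z^{k'-k}\bigr)^{e}-1$ over $z^{k'-k}-1$; these are the same one-line argument.
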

\begin{proof}
    By definition, $\sum_{j=0}^{e-1} \chi_j z^{-jk} = \sum_{j=0}^{e-1} z^{-jk} \sum_{\kappa=0}^{e-1} m_\kappa z^{j\kappa} = \sum_{\kappa=0}^{e-1}m_\kappa\sum_{j=0}^{e-1}  z^{j(\kappa-k)}$. 
    
    Let $S(z^k) = \sum_{j=0}^{e-1} z^{jk}\pmod p$ for all $0\le k < e$. Clearly, $S(1)=e$. Otherwise, $z^k S(z^k) = \sum_{j=1}^{e} z^{jk} = S(z^k)\pmod p$ since $z^e=1\pmod p$ by definition, which implies $S(z^k)=0\pmod p$ for all $z^k\neq 1$. 
    
    Therefore, $\sum_{j=0}^{e-1} \chi_j z^{-jk} = em_k \pmod p$. The claim follows by multiplying both sides by $e^{-1}\pmod p$; by construction, such an inverse exists.
\end{proof}

\subsection{Generalisation to projective representations}

We can use the same idea to compute the character table of projective representations with unitary multipliers, since the eigenvalues of their matrices are also roots of unity.

\begin{definition}
    \label{def: exponent of alpha}
    Let $\alpha$ be a unitary multiplier on $G$. 
    The \emph{exponent} of $\alpha$ is the least common multiple 
    of the order of $\alpha$ (\Cref{def: multiplier}) and 
    of the orders of all elements of $G$ in $\alpha$ (\Cref{def: order of g in alpha}).
\end{definition}

\begin{proof}[Proof of \Cref{thm: dixon character table}(2)]
    By \Cref{def: order of g in alpha}, $\pi(g)^{o_\alpha(g)} = \id_V$ for all $g\in G$. By \Cref{def: exponent of alpha} then, $o_\alpha(g)$ divides $e$ for all $g\in G$, so $\pi(g)^e = \id_V$, which implies that all eigenvalues of $\pi(g)$ are $e$th roots of unity. These clearly belong to $\Zzeta$. Since the character $\chi(g)$ is the sum of these eigenvalues, it too belongs to $\Zzeta$.
\end{proof}

Before proving the rest of \Cref{thm: dixon character table}, we need to eliminate the square roots and fractions from the statement of \Cref{thm: burnside}:
\begin{corollary}
    \label{thm: burnside modified}
    We use the notation of \Cref{thm: burnside}. For each $\alpha$-regular class $A$, define the $m\times m$ matrix
    \begin{equation}
        (M'_A)_{BC}= \sum_{\substack{a\in A \\ab_0\in C}}\frac{\beta(c_0,ab_0)}{\beta(a_0,a)}\alpha(a,b_0).
        \label{eq: modified M matrix}
    \end{equation}
    \begin{enumerate}
        \item Let $\chi$ be the character of an irreducible projective representation of $G$ with multiplier $\alpha$. Then
        \begin{equation}
            |G|M_A' \left(\begin{array}{c}
                  \chi\big(c^{(1)}_0\big)\\
                 \vdots\\
                  \chi\big(c^{(m)}_0\big)\\
            \end{array}\right) 
            = 
            \frac{|G|}d|A|\chi(a_0) \left(\begin{array}{c}
                  \chi\big(c^{(1)}_0\big)\\
                 \vdots\\
                  \chi\big(c^{(m)}_0\big)\\
            \end{array}\right) 
            \label{eq: modified eigenvalue equation}
        \end{equation}
        \item All matrices $M_A$ commute. Their joint eigenvectors are uniquely defined up to a scalar factor and are all of the form~\eqref{eq: modified eigenvalue equation} for some projective irrep character $\chi$.
    \end{enumerate}
\end{corollary}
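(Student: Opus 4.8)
The plan is to mirror the proof of \Cref{thm: burnside}, but to replace one of the two conjugacy-class sums by a \emph{single} group element. This eliminates both the symmetric normalisation $1/\sqrt{|B||C|}$ and the $\sqrt{|C|}$ weighting of the eigenvector, leaving the bare character vector $(\chi(c_0^{(i)}))_i$ as the eigenvector. Concretely, I would start from the conjugacy-class form of \Cref{thm: conjugacy class sum}, namely $\sum_{a\in A}\pi(a)/\beta(a_0,a)=|A|\,(\chi(a_0)/d)\,\id$, and right-multiply both sides by the single matrix $\pi(b_0)$ rather than by a second class sum $F(b_0)$.

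For part (1), applying $\pi(a)\pi(b_0)=\alpha(a,b_0)\pi(ab_0)$ and taking the trace turns the left-hand side into $\sum_{a\in A}\alpha(a,b_0)\chi(ab_0)/\beta(a_0,a)$ and the right-hand side into $|A|(\chi(a_0)/d)\chi(b_0)$. I would then replace $\chi(ab_0)$ by $\beta(c_0,ab_0)\chi(c_0)$ whenever $ab_0$ lies in the $\alpha$-regular class $C$ (via \Cref{thm: character basics}(3)), and by $0$ otherwise, since $\chi$ vanishes off the $\alpha$-regular classes. Grouping the surviving terms by $C$ reproduces exactly $(M'_A)_{BC}$ from \eqref{eq: modified M matrix}, and reading off the coefficient of $\chi(c_0)$ gives $\sum_C (M'_A)_{BC}\chi(c_0)=|A|(\chi(a_0)/d)\chi(b_0)$, which is the $B$-th component of \eqref{eq: modified eigenvalue equation} once the common factor $|G|$ is cancelled. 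The pleasant feature is that the single-element product is strictly simpler than the double class sum of \Cref{thm: burnside}: no auxiliary averaging over $h\in G$ is needed, and no denominator $\beta(b_0,b)$ appears.

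For part (2), I would run the same counting argument as in \Cref{thm: burnside}(2). By \Cref{thm: characters equals classes} there are exactly $m$ inequivalent irreps, and part (1) attaches to each one the eigenvector $v_i=(\chi_i(c_0^{(1)}),\dots,\chi_i(c_0^{(m)}))$; these are linearly independent because the $\chi_i$ form a basis of $\mathbb{H}_\alpha$ (\Cref{thm: orthonormality}) and an $\alpha$-class function is determined by its values on the $\alpha$-regular classes. Since each $M'_A$ is $m\times m$, the $v_i$ exhaust its eigenvectors, and the joint-eigenvalue tuples $(|A|\chi_i(a_0)/d_i)_A$ are pairwise distinct, because equality for all $A$ would force $d_j\chi_i=d_i\chi_j$, contradicting independence. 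The standard argument then shows any joint eigenvector is a scalar multiple of a single $v_i$, so the $M'_A$ share the common eigenbasis $\{v_i\}$ and hence commute, with the $v_i$ unique up to scale.

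The main obstacle to watch is purely organisational: one should resist deriving $M'_A$ from $M_A$ by the diagonal similarity $M'_A=D^{-1}M_A D$ with $D=\mathrm{diag}(\sqrt{|B|})$, since justifying that route requires separately proving that the inner sum in \eqref{eq: burnside M matrix} is independent of the choice of $b\in B$. The direct computation above bypasses this entirely. The similarity relation is best retained only as a consistency check; it also explains why, unlike the $M_A$, the $M'_A$ need not be normal, as their eigenvectors $v_i$ are no longer orthonormal under the standard inner product on $\C^m$.
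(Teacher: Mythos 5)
Your proof is correct, and part (1) takes a genuinely different (and leaner) route than the paper's. The paper re-evaluates the product of \emph{two} class sums $|A||B|F(a_0)F(b_0)$ from \eqref{eq: class homothety product}, exactly as in the proof of \Cref{thm: burnside}, but replacing the sum over $b\in B$ by an average over $h\in G$ with $b=hb_0h^{-1}$; this keeps every intermediate expression a multiple of $\id$ and reuses \Cref{thm: product rule} and \eqref{eq: rep conjugation}. You instead multiply the single class-sum identity $\sum_{a\in A}\pi(a)/\beta(a_0,a)=|A|\chi(a_0)d^{-1}\id$ (the remark after \Cref{thm: conjugacy class sum}) by the lone matrix $\pi(b_0)$ and take traces, invoking only the multiplier relation, \Cref{thm: character basics}(3), \Cref{thm: beta uniqueness}, and the vanishing of $\chi$ off $\alpha$-regular classes. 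What you give up is the structural parallel that makes the statement a literal re-run of \Cref{thm: burnside}'s computation; what you gain is a shorter argument with no $h$-averaging, no second class sum, and no appeal to \Cref{thm: product rule}. Your part (2) coincides with the paper, which simply defers to the argument of \Cref{thm: burnside}(2); note that your use of pairwise distinctness of the eigenvalue tuples is a mild simplification of the paper's linear-independence phrasing, and is sufficient. Finally, the paper does acknowledge the route you warn against: it notes that $(M'_A)_{BC}=(M_A)_{BC}\sqrt{|C|/|B|}$ could be shown by brute-force calculation (which, as you say, amounts to proving the inner sum is independent of the choice of $b\in B$), and, like you, it declines to take that path; your closing observation that the $M'_A$ need not be normal is consistent with the statement, which claims only commutativity and uniqueness of joint eigenvectors.
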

\begin{proof}
    One could show by brute-force calculation that $(M'_A)_{BC} = (M_A)_{BC}\sqrt{|C|/|B|}$, whence claim~(1) follows by substituting into~\eqref{eq: eigenvalue equation components} and rearranging.

    However, it is more instructive to alter how we evaluate~\eqref{eq: class homothety product}.  We replace the sum over $b\in B$ with one over $h\in G$ such that $b = hb_0h^{-1}$, which results in counting each term $|G|/|B|$ times. As before, we also reparametrise the sum over $a\in A$  as $a\mapsto hah^{-1}$:
    \begin{align}
        |A||B| F(a_0)F(b_0) 
        &= \frac{|B|}{|G|}\sum_{a\in A} \sum_{h\in G} \frac{\pi(hah^{-1})}{\beta(a_0,a)\beta_h(a)} \frac{\pi(hb_0h^{-1})}{\beta_h(b)} \qquad\text{(\Cref{thm: product rule})}\nonumber\\
        &= \frac{|B|}{|G|}\sum_{a\in A} \frac{\alpha(a,b_0)}{\beta(a_0,a)} \sum_{h\in G} \pi(h)\pi(ab_0)\pi(h)^{-1}\qquad\eqref{eq: rep conjugation} \nonumber\\
        &= \frac{|B|}d\id \sum_{a\in A} \frac{\alpha(a,b_0)\chi(ab_0)}{\beta(a_0,a)}\qquad\eqref{eq: F matrix product form} \nonumber\\
        &= \frac{|B|}d\id \sum_C \chi(c_0)\sum_{\substack{a\in A \\ab_0\in C}}\frac{\beta(c_0,ab_0)}{\beta(a_0,a)}\alpha(a,b_0).\qquad\text{(\Cref{thm: character basics}(3))} \nonumber\\
        \frac{|G|}d|A|\chi(a_0) \chi(b_0) &= |G| \sum_C \chi(c_0) (M'_A)_{BC},
        \label{eq: modified eigenvalue equation components}
    \end{align}
    as required. (We multiply both sides of~\eqref{eq: modified eigenvalue equation} to replace $1/d$ with $|G|/d$, which by \Cref{thm: irrep dim divides order} is an integer.)

    Claim~(2) follows by the same argument as \Cref{thm: burnside}(2).
\end{proof}

\begin{proof}[Proof of \Cref{thm: dixon character table}(3)]
    By \Cref{def: exponent of alpha}, $\alpha(x,y)^e=1$ for all $x,y\in G$, and so $\beta_h(g)^e=1$ for all $g,h\in G$. It follows that for all $A,B,C$, $(M'_A)_{BC}$ is a sum of $e$th roots of unity and so belongs to $\Zzeta$. Together with claim~(1), this means that every term in~\eqref{eq: modified eigenvalue equation components} belongs to $\Zzeta$. Applying $\theta$ to both sides and using the fact that it is a ring homomorphism, we get
    \begin{equation*}
        \frac{|G|}d|A|\theta(\chi(a_0)) \theta(\chi(b_0)) = |G| \sum_C \theta(\chi(c_0)) (\tilde M_A)_{BC}.
    \end{equation*}
    The claim follows by multiplying both sides by $|G|^{-1}\pmod p$. (By Cauchy's theorem, all prime factors of $|G|$ divide $e$, so $p$ cannot divide $|G|$. Therefore, such an inverse exists. Also, by \Cref{thm: irrep dim divides order}, $d$ divides $|G|$, so $d^{-1}\pmod p$ exists too.)
\end{proof}

\begin{proof}[Proof of \Cref{thm: dixon character table}(4)] 
    We first need to prove that the vectors $\theta(\chi)$ for distinct projective irrep characters $\chi$ are linearly independent in $\Zp$. From \Cref{thm: orthonormality}, we have $\sum_{i=1}^m |C^{(i)}|\chi_{j}\big(c^{(i)}_0\big)\overline{\chi_{k}}\big(c^{(i)}_0\big) = |G|\delta_{jk}$ for all $1\le j,k\le m$. Applying $\theta$ to both sides, we get 
    \begin{equation}
        \sum_{i=1}^m |C^{(i)}|\theta\big(\chi_{j}\big(c^{(i)}_0\big)\big)\theta\big(\overline{\chi_{k}}\big(c^{(i)}_0\big)\big) = |G|\delta_{jk} \pmod p.
        \label{eq: orthonormality mod p}
    \end{equation}
    Let $a_j$ ($1\le j\le m$) be integers such that $\sum_{j=1}^m a_j \theta(\chi_j(g)) = 0\pmod p$ for all class representatives $c_0$. Then
    \begin{align*}
        0 
        = \sum_{i=1}^m |C^{(i)}|\theta\big(\overline{\chi_{k}}\big(c^{(i)}_0\big)\big) \sum_{j=1}^m a_j \theta\big(\chi_j\big(c^{(i)}_0\big)\big) 
        = \sum_{j=1}^m a_j \sum_{i=1}^m |C^{(i)}|\theta\big(\chi_j\big(c^{(i)}_0\big)\big)\theta\big(\overline{\chi_{k}}\big(c^{(i)}_0\big)\big) = |G|a_j. \pmod p
    \end{align*}
    for all $1\le j\le m$. Since $p$ does not divide $|G|$, this implies that $p|a_j$ for all $1\le j\le m$, i.e., that there is no nontrivial linear dependence between the $\theta(\chi)$ in $\Zp$.

    Given this, the claim follows by the same argument as \Cref{thm: burnside}(2). 
    By \Cref{thm: characters equals classes}, there are $m$ inequivalent projective irreps with characters $\chi_i$ $(1\le i \le m)$, which give rise to $m$ linearly independent eigenvectors $v_i$ of $\tilde M_A$ by the process above. Since $m\times m$ matrices can have up to $m$ linearly independent eigenvectors in any field, $\tilde M_A$ has no eigenvectors linearly independent of these. That is, $\tilde M_A$ is fully diagonalisable in $\Zp$.

    The eigenvalues associated with eigenvector $v_i$ are $\big(|C^{(1)}|\theta(\chi_i(c^{(1)}_0)) d_i^{-1},\dots,|C^{(m)}|\theta(\chi_i(c^{(m)}_0)) d_i^{-1}\big)$. By the same argument as above, these lists of eigenvalues are linearly independent. In particular, no linear combination of more than one $v_i$ can be a joint eigenvector of all the $\tilde M_A$.
    
    In summary, all the $\tilde M_A$ are fully diagonalisable in $\Zp$ and they share all their eigenvectors. The joint eigenvectors $v_i$ are unique up to multiplication by a scalar and are of the form given in~\eqref{eq: dixon eigenvalue equation}.
\end{proof}

\begin{proof}[Proof of \Cref{thm: dixon character table}(5)] 
    From a straightforward calculation, $\pi(g^n) = \big[\prod_{i=1}^{n-1} \alpha(g^i,g)\big] \pi(g)^n$ for all $g\in G$ and $n\ge 0$. The trace of $\pi(g)^n$ is the sum of the $n$th powers of the eigenvalues of $\pi(g)$, that is, $\sum_{k=0}^e m_k(g) \zeta^{kn}$. Claim~(a) follows.

    Applying $\theta$ to this result and rearranging yields $\sum_{k=0}^e m_k(g) z^{kn} = \theta(\chi(g^n)) \prod_{i=1}^{n-1}\theta(\alpha(g^i,g))^{-1} \pmod p$. \Cref{eq: dixon fourier} follows by \Cref{thm: fourier}.

    By definition, $m_k(g)\ge 0$ for all $0\le k<e$ and $\sum_{k=0}^{e-1}m_k(g) = \dim\pi$ for all $g\in G$. Therefore, $m_k(g) \le \dim \pi$. In particular, if $\pi$ is irreducible, \Cref{thm: regular decomposition}(2) implies $\dim\pi \le \sqrt{|G|} < p$. That is, $0\le m_k(g)<p$ for all $g\in G$ and $0\le k< e$, i.e., they are the smallest nonnegative integers that satisfy~\eqref{eq: dixon fourier}.
\end{proof}

The last essential ingredient of the algorithm, not addressed by \Cref{thm: dixon character table}, is the ``normalisation'' of the eigenvectors. Indeed, in the notation of the proof above, $av_j$ for any $a\in\Zp$ is as good an eigenvector as $v_j$, but applying the discrete Fourier transform \eqref{eq: dixon fourier} to it would return $am_k\bmod p$ instead of the correct eigenvalue multiplicities $m_k$. 

We shall use~\eqref{eq: orthonormality mod p} to fix this overall scalar. For concreteness, let $\tilde v_j$ be a joint eigenvector of the $\tilde M_A$ corresponding to the projective irrep character $\chi_j$ such that for the class $C^{(1)}$ of the identity, $\tilde v_i^{(1)} = 1$. To simplify the notation, we define 
\begin{align*}
    \hat v_j(g) &= \begin{cases}
        \tilde v_j^{(i)} \theta\big(\beta\big(c^{(i)}_0,g\big)\big) & \text{$g$ is $\alpha$-regular in class $C^{(i)}$}\\
        0 & \text{$g$ is not $\alpha$-regular}
    \end{cases}\\
    \hat{\bar v}_j(g) &= \hat v_j (g^{-1}) \theta(\alpha(g,g^{-1})^{-1}).
\end{align*}
Since $v_j^{(1)} = d_j$, $\tilde v_j = d_j^{-1} v_j \pmod p$, and by \Cref{thm: character basics}(3), $\hat v_j(g) = d_j^{-1} \theta(\chi_j(g))\pmod p$. By \Cref{thm: character basics}(2) then, $\hat {\bar v}_j(g) = d_j^{-1} \theta\big(\overline{\chi_j(g)}\big)\pmod p$, whence, by~\eqref{eq: orthonormality mod p},
\begin{align}
    \mathcal N_j &:=\sum_{i=1}^m |C^{(i)}|\hat v_j\big(c^{(i)}_0\big) \hat {\bar v}_j\big(\big(c^{(i)}_0\big)\big) = d_j^{-2}
    \sum_{i=1}^m |C^{(i)}| \theta\big(\chi_j\big(c^{(i)}_0\big)\big)\theta\big(\overline\chi_j\big(c^{(i)}_0\big)\big)  =d_j^{-2}|G|.\pmod p
    \label{eq: normalisation mod p}
\end{align}
Given $\tilde v_j$, $\mathcal{N}_j$ can be computed explicitly, so our task is finding $d_j$ such that $\mathcal N_j d_j^2 = |G|\pmod p$. Since there exists an eigenvector $v_j$ that satisfies~\eqref{eq: orthonormality mod p}, this equation has a solution, and since the multiplication group of $\Zp$ is cyclic of even order,\footnote{$p\ge3$ unless $e=1$, which is trivial.} there are precisely two solutions, $\pm d_j\pmod p$. To select the correct solution, we note that \Cref{thm: regular decomposition}(2) implies $0<d_j < \sqrt{|G|}$ for all projective irreps (cf.~the proof of \Cref{thm: dixon character table}(5) above). Since we require $p>2\sqrt{|G|}$, only one of $\pm d_j \bmod p$ can satisfy this, so $d_j$ can be uniquely determined.

These arguments can be summarised in the following algorithm.
\begin{algorithm}[Eigenvector normalisation in $\Zp$]
    \label{alg: normalisation mod p}
    Given a joint eigenvector $\tilde v$ in $\Zp$ of the matrices $\tilde M_A$ defined in~\eqref{eq: dixon M matrix} such that $\tilde v^{(1)}=1$,
    \begin{enumerate}
        \item Compute $\mathcal N$ using~\eqref{eq: normalisation mod p}.
        \item Find the unique solution of $\mathcal N d^2 = |G|\pmod p$ such that $0<d< \sqrt{|G|}$. This $d$ is the degree of the irrep corresponding to the eigenvector $\tilde v$.
        \item The irrep characters are given by $v^{(i)} = \theta\big(\chi\big(c^{(i)}_0\big)\big) = d\tilde v^{(i)}\pmod p$ for all $1\le i\le m$.
    \end{enumerate}
\end{algorithm}

Finally, we note that the character of each $g\in G$ can be reconstructed without performing the full Fourier transform~\eqref{eq: dixon fourier}. In particular, let $n$ be the order of $g$ in $G$, and let $h$ be an integer such that $A = \prod_{i=1}^{n-1} \alpha(g^i, g) = \zeta^{nh}$; such an $h$ exists as the order of $g$ in $\alpha$ divides $e$. Then $\pi(g)^n = A\id_V$, so its eigenvalues are of the form $\zeta^{h + ke/n}$ for some $0<k\le n$. It follows from \Cref{thm: dixon character table}(5b) that the multiplicity of each of these is
\begin{equation*}
    \tilde m_k (g):= m_{h+ke/n} (g)= n^{-1}\sum_{j=0}^{n-1} (z^{e/n})^{-jk}  \times z^{-jh}\theta(\chi(g^j)) \prod_{i=1}^{j-1}\theta(\alpha(g^i,g))^{-1} \pmod p,
\end{equation*}
so it is enough to perform a discrete Fourier transform of size $n$ rather than $e$.

\section{Dixon's decomposition algorithm for projective representations}
\label{sec: dixon irrep matrix}

\Cref{thm: dixon irrep matrix}(1) hinges on the fact that $\langle f\rangle_\pi$ is a linear map between $\pi$ and itself, that is, it commutes with $\pi(g)$ for all $g\in G$. Therefore, we start by proving the statement for all such matrices in \Cref{thm: symmetry protected degeneracy}. \Cref{thm: dixon irrep matrix}(2) is, in a sense, the converse of \Cref{thm: schur lemma}, whence \Cref{thm: schur converse}.

\begin{lemma}
    \label{thm: symmetry protected degeneracy}
    Let $(\pi,V,\alpha)$ be a projective representation of $G$ and let $f:V\to V$ be a linear map that commutes with $\pi(g)$ for all $g\in G$. Then each eigenspace $W$ of $f$ is $G$-stable, that is, $\pi(g)W \subseteq W$ for all $g\in G$.
\end{lemma}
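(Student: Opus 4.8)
The plan is to prove the standard fact that commuting linear maps preserve one another's eigenspaces; for this lemma only the eigenspaces of $f$ and the maps $\pi(g)$ play a role. First I would fix an eigenvalue $\lambda$ of $f$ and set $W=\ker(f-\lambda\,\id_V)$, the corresponding eigenspace. (It is cleanest to interpret ``eigenspace'' as this kernel, so that the argument does not presuppose that $f$ is diagonalisable.)

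The key step is a one-line computation. Take any $w\in W$ and any $g\in G$. Applying $f$ to $\pi(g)w$ and invoking the hypothesis $f\pi(g)=\pi(g)f$ gives
\[
f\big(\pi(g)w\big)=\pi(g)f(w)=\pi(g)(\lambda w)=\lambda\,\pi(g)w,
\]
so $\pi(g)w$ again lies in $\ker(f-\lambda\,\id_V)=W$. Since $w\in W$ and $g\in G$ were arbitrary, $\pi(g)W\subseteq W$ for every $g$, which is exactly the assertion.

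I do not expect a genuine obstacle here: the argument uses only the linearity of $f$ and the commutation relation, and in particular requires neither unitarity of $\pi$ nor diagonalisability of $f$, while the multiplier $\alpha$ never enters. The reason this lemma suffices for \Cref{thm: dixon irrep matrix}(1) is that $\langle f\rangle_\pi$ is an intertwiner, i.e.\ it commutes with all $\pi(g)$; feeding $\langle f\rangle_\pi$ into this lemma in place of $f$ shows each of its eigenspaces $W$ is $G$-stable, and the restriction $g\mapsto\pi(g)|_W$ then inherits the relation $\pi(x)\pi(y)=\alpha(x,y)\pi(xy)$ on $W$, yielding a projective representation with the same multiplier $\alpha$.
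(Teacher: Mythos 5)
Your proof is correct and takes essentially the same route as the paper's: both fix an eigenvalue $\lambda$ of $f$ and use the commutation relation $f\pi(g)=\pi(g)f$ to show $f\big(\pi(g)w\big)=\lambda\,\pi(g)w$ for $w$ in the eigenspace, so $\pi(g)$ preserves it. Your framing of the eigenspace as $\ker(f-\lambda\,\id_V)$ and the observation that neither diagonalisability of $f$ nor the multiplier $\alpha$ enters are accurate but do not change the argument.
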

\begin{proof}
    Let $W$ be an eigenspace of $f$ and let $\lambda$ be the corresponding eigenvalue. Then if $f\cdot v =\lambda v$, then for all $g\in G$, $f\cdot\pi(g)\cdot v = \pi(g)\cdot v=\lambda \pi(g)\cdot v$, i.e., $\pi(g) \cdot v$ is also an eigenvector of $f$ with the same eigenvalue. That is, $v\in W\implies \pi(g)\cdot v\in W$ for all $g\in G$, as required.
\end{proof}

\begin{lemma}
    \label{thm: schur converse}
    Let $(\pi,V,\alpha)$ be a projective representation of $G$. Then $\pi$ is irreducible if and only if
    \begin{equation*}
        \langle f\rangle_\pi=\frac1{|G|}\sum_{g\in G} \pi(g)^{-1} f \pi(g) = \frac1{|G|}\sum_{g\in G} \pi(g) f \pi(g)^{-1}
    \end{equation*}
    is a multiple of the identity for all linear maps $f:V\to V$.
\end{lemma}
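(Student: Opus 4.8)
The statement is a biconditional, and one direction is essentially free: if $\pi$ is irreducible, then Proposition \ref{thm: schur lemma} gives $\langle f\rangle_\pi=\frac{\tr(f)}{\dim V}\id_V$ for every linear $f:V\to V$, which is manifestly a multiple of the identity, while the equality of the two averaged forms $\frac1{|G|}\sum_g\pi(g)^{-1}f\pi(g)$ and $\frac1{|G|}\sum_g\pi(g)f\pi(g)^{-1}$ follows from the substitution $g\mapsto g^{-1}$, the multiplier factors cancelling (cf.\ Corollary \ref{thm: schur shuffled}). Hence the real content of the lemma is the converse, which I would prove by contraposition: assuming $\pi$ is reducible, I will exhibit a single $f$ for which $\langle f\rangle_\pi$ fails to be a scalar.

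Since $\pi$ is reducible, there is a proper nontrivial $G$-stable subspace $W\subsetneq V$. The plan is to take a rank-one $f$ whose image lies in $W$ but whose trace is nonzero: pick $0\neq w\in W$ and a functional $\phi\in V^*$ with $\phi(w)=1$, and set $f=w\otimes\phi$, i.e.\ $f(v)=\phi(v)\,w$, so that $\tr(f)=\phi(w)=1$. Two facts about $\langle f\rangle_\pi$ then collide. First, $\tr\langle f\rangle_\pi=\tr(f)=1$, because each summand $\pi(g)^{-1}f\pi(g)$ is an honest conjugate of $f$ by the invertible operator $\pi(g)$, so the multiplier never enters and the trace is preserved. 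Second, the image of $\langle f\rangle_\pi$ is contained in $W$: each term $\pi(g)^{-1}f\pi(g)$ first lands in $\operatorname{im}f\subseteq W$ under $f$ and then stays in $W$ under $\pi(g)^{-1}$, since $\pi(g)^{-1}=\alpha(g,g^{-1})^{-1}\pi(g^{-1})$ is a scalar multiple of $\pi(g^{-1})$ and $W$ is $G$-stable.

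Combining the two facts finishes the argument. If $\langle f\rangle_\pi=\lambda\,\id_V$ were a scalar, its image would be all of $V$ unless $\lambda=0$; but its image lies in $W\subsetneq V$, forcing $\lambda=0$ and hence $\langle f\rangle_\pi=0$, which contradicts $\tr\langle f\rangle_\pi=1\neq0$. Therefore $\langle f\rangle_\pi$ is not a multiple of the identity, completing the contrapositive.

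I expect no serious obstacle here; the only point requiring care is the projective bookkeeping, namely that $\pi(g)^{-1}$ (rather than $\pi(g^{-1})$) preserves the $G$-stable subspace $W$ and that conjugation by $\pi(g)$ preserves the trace despite the multiplier. Both reduce to the single observation that $\pi(g)$ is invertible with $\pi(g)^{-1}$ proportional to $\pi(g^{-1})$, so the cocycle factors are irrelevant to these particular claims. As an alternative that ties in with \Cref{thm: symmetry protected degeneracy}, one could instead invoke complete reducibility to split $V=W\oplus W'$ into $G$-stable pieces and take $f=P$, the projection onto $W$ along $W'$; since $P$ then commutes with every $\pi(g)$, one computes $\langle P\rangle_\pi=P$, a nontrivial idempotent and hence non-scalar. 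I would present the rank-one witness as the main argument, as it is more economical and does not appeal to complete reducibility.
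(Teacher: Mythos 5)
Your proof is correct, but your main argument takes a genuinely different route from the paper's. The paper proves the converse by writing $V=V_1\oplus V_2$ with \emph{both} summands $\pi$-invariant and taking $f$ to be the projection onto $V_1$ along $V_2$; since each $\pi(g)^{-1}f\pi(g)=f$, one gets $\langle f\rangle_\pi=f$, a nontrivial idempotent and hence not a scalar. This is precisely the ``alternative'' you sketch in your last sentences, and note that it silently invokes complete reducibility: reducibility alone yields one invariant subspace, not an invariant complement (the paper can afford this, having quoted complete reducibility from Ref.~\cite{Cheng2015CharacterTheory} as a standing proposition). Your main argument instead uses the rank-one witness $f=w\otimes\phi$ with $\operatorname{im}f\subseteq W$ and $\tr f=1$, together with the two observations that conjugation by the invertible $\pi(g)$ preserves the trace and that $\pi(g)^{-1}=\alpha(g,g^{-1})^{-1}\pi(g^{-1})$ maps $W$ into $W$; the collision between $\tr\langle f\rangle_\pi=1$ and $\operatorname{im}\langle f\rangle_\pi\subseteq W\subsetneq V$ then rules out $\langle f\rangle_\pi$ being scalar. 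Both proofs are sound. What the paper's route buys is brevity, since $\langle P\rangle_\pi=P$ is a one-line computation once the splitting exists; what yours buys is logical economy, working directly from the definition of reducibility (\Cref{def: irrep}) without appealing to complete reducibility, and your handling of the projective bookkeeping (the cocycle factors being irrelevant to both trace-invariance and $G$-stability under $\pi(g)^{-1}$) is exactly right.
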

\begin{proof}
    The $\implies$ direction restates \Cref{thm: schur lemma} and \Cref{thm: schur shuffled}.
    To prove the $\impliedby$ direction, let $\pi$ be reducible, that is, let $V=V_1\oplus V_2$ such that $V_1$ and $V_2$ are invariant under $\pi(g)$ for all $g\in G$. Let $f$ be the linear map defined by $f(v)=v$ for all $v\in V_1$ and $f(v)=0$ for all $v\in V_2$. Then, since $V_1$ and $V_2$ are invariant under $\pi(g)$ for all $g\in G$,
    $\pi(g)^{-1}\cdot f\cdot \pi(g)\cdot v = v$ for all $v\in V_1$ and $\pi(g)^{-1}\cdot f\cdot \pi(g)\cdot v = 0$ for all $v\in V_2$. Therefore, $\pi(g)^{-1}f\pi(g) = f$ for all $g\in G$, so $\sum_{g\in G} \pi(g)^{-1} f \pi(g) = |G|f$, which is not a multiple of $\id_V$.
\end{proof}

\begin{proof}[Proof of \Cref{thm: dixon irrep matrix}]
    \begin{enumerate}
        \item $\langle f\rangle_\pi$ commutes with $\pi(g)$ for all $g\in G$:
        \begin{equation*}
            \pi(g)^{-1} \langle f\rangle_\pi \pi(g) = \frac1{|G|}\sum_{h\in G} \pi(g)^{-1}\pi(h)^{-1} f \pi(h)\pi(g) = \frac1{|G|}\sum_{h\in G} \pi(hg)^{-1} f \pi(hg) = \langle f\rangle_\pi,
        \end{equation*}
        so by \Cref{thm: symmetry protected degeneracy}, each eigenspace $W$ of it is invariant under $\pi$, so restricting $\pi$ to $W$ yields a valid projective representation.
        \item According to \Cref{thm: schur converse}, $\pi$ is irreducible if and only if $\langle f\rangle_\pi\propto \id_V$ for all linear maps $f:V\to V$. This obviously implies $\langle f_i\rangle_\pi\propto \id_V$ for all $1\le i\le (\dim V)^2$. The other direction holds since all linear maps $f:V\to V$ can be written as linear combinations of the $f_i$ and the map $f\mapsto\langle f\rangle_\pi$ is linear.
    \end{enumerate}
\end{proof}

\Cref{thm: dixon irrep matrix} does not directly yield an algorithm to generate the matrices of every irreducible projective representation. Instead, it gives us a way to systematically decompose any projective representation into irreps:
\begin{algorithm}[Irrep decomposition]
    \label{alg: dixon recursive}
    Given a projective representation $(\pi, V, \alpha)$ of $G$,
    \begin{enumerate}
        \item Find a basis $\{f_i\}_{i=1}^{(\dim V)^2}$ of linear maps $V\to V$.
        \item Find $1\le i\le (\dim V)^2$ such that $\langle f_i\rangle_\pi$ is not a multiple of the identity. Decompose $V$ into eigenspaces of $\langle f_i\rangle_\pi$ and repeat from (1) for each of them.
        \item If $\langle f_i\rangle_\pi$ is a multiply of the identity for all $1\le i\le (\dim V)^2$, $\pi$ is irreducible and we are done.
    \end{enumerate}
\end{algorithm}
Alternatively, we can take a random linear map $f:V\to V$, for which the eigenspaces of $\langle f_i\rangle$ are all irreducible with probability 1.

From here, we can take at least two different approaches to generate the matrices of every irreducible representation, which are the subject of the next two subsections.

\subsection{Decomposing the regular projective representation}

The first approach is to decompose a representation known to contain all irreps at least once. The classic example is the regular projective representation (\Cref{thm: regular decomposition}). In this special case, the space of all $\langle f\rangle_\pi$ is spanned by $|G|$ rather than $|G|^2$ linear maps, which are conveniently labelled with elements of $G$. In the usual basis of the regular representation, these are given by the relations 
\begin{equation*}
    \tilde L_h(e_g) :=  \alpha(g,h^{-1})e_{gh^{-1}}
\end{equation*}
for the left regular projective representation $L$ and by
\begin{equation*}
    \tilde R_h(e_g) :=\alpha(g^{-1},h^{-1}) e_{hg}
\end{equation*}
for the right regular projective representation $R$. As required, $[L(g),\tilde L_h] = [R(g),\tilde R_h]=0$ for all $g,h\in G$.

\subsection{Iterative construction}

Writing out matrices of size $|G|\times|G|$ may not be viable for large groups, even if their irreps have comparatively small dimensions. Indeed, Ref.~\cite{Dixon1970} does not take this approach, but rather uses the following result due to Burnside to iteratively find every irrep starting from a single faithful representation:
\begin{proposition}[\cite{Burnside1911}, \S226]
    \label{thm: faithful rep powers}
    Let $\rho$ be a faithful linear representation of $G$. Then every irreducible linear representation of $G$ is contained in at least one of $\rho,\rho\otimes\rho,\rho\otimes\rho\otimes\rho,\dots$ 
\end{proposition}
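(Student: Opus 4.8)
The plan is to argue entirely at the level of characters. I would write $\psi$ for the character of $\rho$ and set $n=\psi(1)=\dim\rho$. By \Cref{thm: tensor product}, the $k$-fold tensor power $\rho^{\otimes k}$ is again a linear representation (its multiplier is $1^k=1$), and its character is $\psi^k$, since the trace of a tensor product is the product of the traces. Applying \Cref{thm: orthonormality} to the trivial multiplier, the multiplicity of a given irreducible character $\chi$ in $\rho^{\otimes k}$ equals $(\psi^k,\chi)=\frac1{|G|}\sum_{g\in G}\psi(g)^k\overline{\chi(g)}$. The statement therefore reduces to showing that for every irreducible $\chi$ there is some $k\ge 1$ for which
\[
    a_k:=\sum_{g\in G}\psi(g)^k\,\overline{\chi(g)}
\]
is nonzero; equivalently, since $a_k=|G|\,(\psi^k,\chi)$ is $|G|$ times a nonnegative integer, that this multiplicity is positive for some $k\ge1$.

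The single place faithfulness would enter is the following observation. Each $\psi(g)$ is a sum of $n$ eigenvalues of $\rho(g)$, all roots of unity, so $|\psi(g)|\le n$ with equality only when $\rho(g)$ is a scalar; moreover $\psi(g)=n$ forces all these eigenvalues to equal $1$, i.e.\ $\rho(g)=\id_V$, which by faithfulness happens only for $g=1$. Thus $n$ is a value taken by $\psi$ at the identity and at no other element.

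From here I would group the sum defining $a_k$ by the distinct values of $\psi$. Call these $\alpha_0=n,\alpha_1,\dots,\alpha_{r-1}$ and put $c_j=\sum_{g:\,\psi(g)=\alpha_j}\overline{\chi(g)}$, so that $a_k=\sum_{j=0}^{r-1}\alpha_j^k c_j$ and $c_0=\overline{\chi(1)}=\chi(1)>0$. I would then choose the interpolating polynomial $Q(x)=x\prod_{j=1}^{r-1}(x-\alpha_j)$: it is monic of degree $r$, has \emph{no constant term}, vanishes at every $\alpha_j$ with $j\ge 1$, and satisfies $Q(n)\ne 0$ because $n$ differs from $0$ and from each $\alpha_j$ $(j\ge1)$ by the previous paragraph. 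Writing $Q(x)=\sum_{k=1}^{r}q_k x^k$ and summing against the $a_k$,
\[
    \sum_{k=1}^{r} q_k a_k=\sum_{j=0}^{r-1}c_j\,Q(\alpha_j)=c_0\,Q(n)\neq 0,
\]
so at least one $a_k$ with $1\le k\le r$ is nonzero, which is exactly what is required.

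I expect the one genuinely delicate point to be the insistence on $k\ge 1$. A naive Vandermonde argument produces some nonzero $a_k$ only in the range $0\le k\le r-1$, and the $k=0$ term $a_0=\sum_g\overline{\chi(g)}$ is nonzero precisely for the trivial representation; that representation would then escape the conclusion. Inserting the extra factor $x$ in $Q$ removes the constant term while preserving $Q(n)\ne0$ (using $n\ne0$), which forces the nonzero contribution into degrees $k\ge1$ and disposes of the trivial representation together with all the others in a single stroke. Every other step is routine: the eigenvalue estimate uses only that the eigenvalues of $\rho(g)$ are roots of unity, and the extraction of multiplicities is just \Cref{thm: orthonormality}.
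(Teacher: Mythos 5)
Your proof is correct and complete. There is no in-paper argument to compare it with: the paper quotes this proposition from \cite{Burnside1911}, \S226, and uses it as a black box, giving no proof of its own. Your argument is essentially the classical one behind that citation: reduce containment to the nonvanishing of $a_k=\sum_{g\in G}\psi(g)^k\overline{\chi(g)}$ for some $k\ge1$, group the sum over the distinct values $\alpha_0=n,\alpha_1,\dots,\alpha_{r-1}$ of $\psi$, use faithfulness exactly once to identify the fibre $\psi^{-1}(n)$ with $\{1\}$ (so that $c_0=\chi(1)>0$), and annihilate the remaining fibres with an interpolating polynomial. Your refinement of taking $Q(x)=x\prod_{j=1}^{r-1}(x-\alpha_j)$, which has no constant term, is exactly what is needed to force $k\ge1$ and thereby cover the trivial representation along with all the others; a plain Vandermonde argument only produces some nonzero $a_k$ with $0\le k\le r-1$, as you observe. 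Two small points you leave implicit are standard and harmless: $\rho(g)$ has finite order, hence is diagonalisable, so ``all eigenvalues equal $1$'' really does give $\rho(g)=\id_V$; and $(\psi^k,\chi)$ is a nonnegative integer, so $a_k\neq0$ indeed means $\chi$ is contained in $\rho^{\otimes k}$.
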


We can find all projective irreps of $G$ in a similar spirit. While $\pi\otimes\pi$ for a projective representation $(\pi,V,\alpha)$ has multiplier $\alpha^2$ rather than, the tensor product of $\pi$ with a linear representations has the right multiplier. Furthermore, we can transpose the full set of linear irreps into that of the projective irreps in the following way.

\begin{proposition}
    \label{thm: projective irreps from linear irreps}
    Let $(\pi,V,\alpha)$ be a projective representation of $G$. Let $\rho_1,\dots,\rho_n$ be the inequivalent linear representations of $G$. Then every irreducible projective representation of $G$ with multiplier $\alpha$ is contained in at least one of $\pi\otimes\rho_1,\dots,\pi\otimes\rho_n$.
\end{proposition}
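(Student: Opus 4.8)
The plan is to reduce the proposition to a multiplicity count that can be evaluated with ordinary (linear) character orthogonality. Throughout I may assume $\alpha$ is unitary, since every projective representation is equivalent to a unitary one. By \Cref{thm: tensor product} each $\pi\otimes\rho_i$ is a projective representation with multiplier $\alpha\cdot 1=\alpha$, so by complete reducibility and \Cref{thm: orthonormality} it decomposes into projective irreps with multiplier $\alpha$; the multiplicity of a fixed such irrep $\sigma$ (character $\chi_\sigma$) in $\pi\otimes\rho_i$ is the nonnegative integer
\[
    m_i := (\chi_\pi\chi_{\rho_i},\,\chi_\sigma) = \frac1{|G|}\sum_{g\in G}\chi_\pi(g)\,\chi_{\rho_i}(g)\,\overline{\chi_\sigma(g)},
\]
using that the character of a tensor product is the product of characters. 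The whole proposition is the assertion that $m_i>0$ for at least one $i$.

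Rather than handling the $m_i$ individually, I would form the weighted sum $\sum_i (\dim\rho_i)\,m_i$ and interchange the two summations. The inner sum over $i$ then becomes $\sum_i (\dim\rho_i)\chi_{\rho_i}(g)$, which is the character of the regular linear representation and therefore equals $|G|$ at $g=1$ and $0$ elsewhere (equivalently, column orthogonality of the ordinary character table). Only the term $g=1$ survives, giving
\[
    \sum_i (\dim\rho_i)\,m_i = \chi_\pi(1)\,\overline{\chi_\sigma(1)} = \dim\pi\cdot\dim\sigma
\]
by \Cref{thm: character basics}(1). Since every $\dim\rho_i\ge1$ and every $m_i\ge0$, this sum of nonnegative terms being strictly positive forces $m_i>0$ for at least one $i$, which is exactly the claim.

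The computation is short, so the point requiring care is conceptual rather than calculational: one must be sure that the \emph{projective} representation $\pi\otimes\rho_i$ really carries multiplier $\alpha$, so that \Cref{thm: orthonormality} applies and the $m_i$ are genuinely computed by that inner product, despite $\pi$ being projective and the $\rho_i$ linear. This mixed projective/linear bookkeeping---handled by \Cref{thm: tensor product} together with the reduction to unitary $\alpha$---is the only delicate step; after it the argument collapses onto the classical regular-character identity, which supplies the decisive positivity. As an alternative that avoids the weighting by $\dim\rho_i$, one can instead note that $\chi_\pi\overline{\chi_\sigma}$ is an ordinary class function (equivalently, $\pi\otimes\sigma^*$ is a genuine linear representation by \Cref{thm: dual} and \Cref{thm: tensor product}); were all $m_i$ zero, this class function would be orthogonal to every linear irreducible character and hence vanish identically, contradicting its value $\dim\pi\cdot\dim\sigma\ne0$ at $g=1$.
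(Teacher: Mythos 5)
Your proof is correct and is essentially the paper's own argument: your weighted sum $\sum_i(\dim\rho_i)\,m_i$ is exactly the multiplicity of $\sigma$ in $\pi\otimes R$, where $R$ is the regular linear representation, and both proofs conclude via the fact that $\chi_R$ is supported at the identity, giving the value $\dim\pi\cdot\dim\sigma>0$. The only difference is presentational --- you argue positivity directly while the paper phrases it as a contradiction --- so the two proofs coincide in substance.
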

\begin{proof}
    It follows from \Cref{thm: tensor product} that if $\rho$ is a linear representation, $\pi\otimes\rho$ is a projective representation with multiplier $\alpha$.
    Assume there is an irreducible projective representation $\tilde\pi$ that does not appear in any of $\pi\otimes\rho_1,\dots,\pi\otimes\rho_n$. Then $\tilde\pi$ cannot appear in the direct sum of any number of these either, including $\pi\otimes R$, where $R$ is the regular linear representation of $G$. The characters of $\pi\otimes R$ are $\chi_{\pi\otimes R}(1) = (\dim \pi) |G|$ and $\chi_{\pi\otimes R}(g)=0$ for all $1\neq g\in G$. This implies (cf. the proof of \Cref{thm: regular decomposition}) that $\pi\otimes R$ contains every projective irrep $\pi_i$ of $G$ with multiplicity $(\dim \pi)(\dim\pi_i)>0$. This contradiction concludes the proof.
\end{proof}

Alternatively, one can build all irreps from a single faithful projective representation and its dual:
\begin{proposition}
    Let $(\pi,V,\alpha)$ be a faithful projective representation of $G$. Then
    \begin{enumerate}
        \item $\pi\otimes \pi^*$ is a faithful linear representation of $G$.
        \item Every irreducible projective representation of $G$ with multiplier $\alpha$ is contained in at least one of $\pi,\pi\otimes\pi^*\otimes\pi,\pi\otimes\pi^*\otimes\pi\otimes\pi^*\otimes\pi\dots$
    \end{enumerate}
\end{proposition}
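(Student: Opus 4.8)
The plan is to establish part~(1) directly and then obtain part~(2) by chaining together \Cref{thm: faithful rep powers} and \Cref{thm: projective irreps from linear irreps}.

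For part~(1), I would first record that $\pi\otimes\pi^*$ is genuinely a linear representation: by \Cref{thm: dual} the dual $\pi^*$ carries multiplier $\alpha^{-1}$, so by \Cref{thm: tensor product} the product $\pi\otimes\pi^*$ carries multiplier $\alpha\alpha^{-1}=1$. To see it is faithful, I would show that $(\pi\otimes\pi^*)(g)=\id$ forces $g=1$. Since $(\pi\otimes\pi^*)(g)=\pi(g)\otimes\pi^*(g)$, I would invoke the elementary fact that a tensor product $A\otimes B$ of invertible operators equals the identity only when $A=\lambda\,\id$ and $B=\lambda^{-1}\id$ for a single scalar $\lambda$ (comparing matrix entries, the off-diagonal entries of each factor must vanish, while the diagonal entries of the two factors are mutually reciprocal and hence constant). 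In particular $\pi(g)\in\C^\times\id_V$. Faithfulness of the projective representation $\pi$ means precisely that the induced homomorphism $G\to\mathrm{PGL}(V)$ is injective, i.e.\ that $\pi(g)$ is a scalar multiple of the identity only for $g=1$; this yields $g=1$, so $\pi\otimes\pi^*$ is faithful.

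For part~(2), write $\rho:=\pi\otimes\pi^*$ for the faithful linear representation just produced, and note that the listed representations are exactly $\rho^{\otimes n}\otimes\pi$ for $n=0,1,2,\dots$, the $n=0$ term being $\pi$ itself. Every linear irrep $\rho_i$ of $G$ is contained in some power $\rho^{\otimes n_i}$ with $n_i\ge 0$: for nontrivial $\rho_i$ this is \Cref{thm: faithful rep powers} applied to the faithful linear representation $\rho$, and the trivial representation is covered by $n_i=0$ (reading $\rho^{\otimes 0}$ as the trivial representation). Consequently $\pi\otimes\rho_i$ is contained in $\pi\otimes\rho^{\otimes n_i}$, which is equivalent to $\rho^{\otimes n_i}\otimes\pi$ because the two orderings of a tensor product give equivalent representations, i.e.\ it is contained in one of the listed terms. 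Finally, \Cref{thm: projective irreps from linear irreps} guarantees that every projective irrep with multiplier $\alpha$ sits inside some $\pi\otimes\rho_i$; combining the two inclusions shows it sits inside one of the listed representations, as claimed.

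I expect the only real obstacle to be the faithfulness step in part~(1): fixing the correct notion of a faithful projective representation (injectivity of $G\to\mathrm{PGL}(V)$, equivalently that $\pi(g)\in\C^\times\id_V$ only for $g=1$) and invoking the tensor-product rigidity that $A\otimes B=\id$ forces both factors to be scalar. Once part~(1) is in hand, part~(2) is a routine combination of two already-proved results, the only minor point being the harmless identification of $\pi\otimes\rho^{\otimes n}$ with $\rho^{\otimes n}\otimes\pi$.
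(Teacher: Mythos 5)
Your proof is correct, and part~(2) is exactly the paper's argument: apply \Cref{thm: faithful rep powers} to the faithful linear representation $\rho=\pi\otimes\pi^*$ to place every linear irrep inside some $\rho^{\otimes n}$, then invoke \Cref{thm: projective irreps from linear irreps} to place every projective irrep with multiplier $\alpha$ inside some $\rho^{\otimes n}\otimes\pi$. The divergence is in part~(1): the paper dismisses faithfulness with the single phrase ``Given that $\pi$ is faithful, $\pi\otimes\pi^*$ is trivially faithful too'', whereas you supply an actual argument, and your care here is warranted rather than pedantic. The paper never defines faithfulness for a projective representation, and the proposition is \emph{false} under the naive reading that $\pi:G\to\mathrm{GL}(V)$ is injective: take $G=\mathbb{Z}/2=\{1,s\}$ with trivial multiplier and $\pi(s)=-\id_2$; then $\pi$ is injective, yet $(\pi\otimes\pi^*)(s)=(-\id)\otimes(-\id)=\id$, so part~(1) fails, and every representation in the list of part~(2) is a multiple of the sign representation, so the trivial irrep never appears and part~(2) fails too. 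Your reading---injectivity of the induced map $G\to\mathrm{PGL}(V)$, i.e.\ $\pi(g)$ scalar only for $g=1$---is the one under which the statement holds, and your tensor-rigidity step (that $A\otimes B=\id$ forces both factors to be reciprocal scalars) is the correct mechanism turning projective faithfulness of $\pi$ into faithfulness of the linear representation $\pi\otimes\pi^*$. One minor remark: your special-casing of the trivial representation via $\rho^{\otimes 0}$ is harmless but not needed, since for a faithful linear representation the trivial irrep already occurs in some \emph{positive} tensor power, as \Cref{thm: faithful rep powers} asserts; in any case your convention of reading the $n=0$ term of the list as $\pi$ itself keeps the argument self-contained.
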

\begin{proof}
    \begin{enumerate}
        \item By \Cref{thm: dual} and \Cref{thm: tensor product}, $\pi\otimes\pi^*$ is a projective representation with multiplier $1$, i.e., a linear representation. Given that $\pi$ is faithful, $\pi\otimes\pi^*$ is trivially faithful too.
        \item By \Cref{thm: faithful rep powers}, every linear irrep $\rho$ of $G$ appears in $(\pi\otimes\pi^*)^{\otimes n}$ for some $n$, so $\rho\otimes\pi$ appears in $(\pi\otimes\pi^*)^{\otimes n}\otimes\pi$ for some $n$.
        By \Cref{thm: projective irreps from linear irreps}, every projective irrep of $G$ appears in $\rho\otimes\pi$ for some linear irrep $\rho$, hence in $(\pi\otimes\pi^*)^{\otimes n}\otimes\pi$ for some $n$.
    \end{enumerate}
\end{proof}


\begin{thebibliography}{6}%
\makeatletter
\providecommand \@ifxundefined [1]{%
 \@ifx{#1\undefined}
}%
\providecommand \@ifnum [1]{%
 \ifnum #1\expandafter \@firstoftwo
 \else \expandafter \@secondoftwo
 \fi
}%
\providecommand \@ifx [1]{%
 \ifx #1\expandafter \@firstoftwo
 \else \expandafter \@secondoftwo
 \fi
}%
\providecommand \natexlab [1]{#1}%
\providecommand \enquote  [1]{``#1''}%
\providecommand \bibnamefont  [1]{#1}%
\providecommand \bibfnamefont [1]{#1}%
\providecommand \citenamefont [1]{#1}%
\providecommand \href@noop [0]{\@secondoftwo}%
\providecommand \href [0]{\begingroup \@sanitize@url \@href}%
\providecommand \@href[1]{\@@startlink{#1}\@@href}%
\providecommand \@@href[1]{\endgroup#1\@@endlink}%
\providecommand \@sanitize@url [0]{\catcode `\\12\catcode `\$12\catcode `\&12\catcode `\#12\catcode `\^12\catcode `\_12\catcode `\%12\relax}%
\providecommand \@@startlink[1]{}%
\providecommand \@@endlink[0]{}%
\providecommand \url  [0]{\begingroup\@sanitize@url \@url }%
\providecommand \@url [1]{\endgroup\@href {#1}{\urlprefix }}%
\providecommand \urlprefix  [0]{URL }%
\providecommand \Eprint [0]{\href }%
\providecommand \doibase [0]{https://doi.org/}%
\providecommand \selectlanguage [0]{\@gobble}%
\providecommand \bibinfo  [0]{\@secondoftwo}%
\providecommand \bibfield  [0]{\@secondoftwo}%
\providecommand \translation [1]{[#1]}%
\providecommand \BibitemOpen [0]{}%
\providecommand \bibitemStop [0]{}%
\providecommand \bibitemNoStop [0]{.\EOS\space}%
\providecommand \EOS [0]{\spacefactor3000\relax}%
\providecommand \BibitemShut  [1]{\csname bibitem#1\endcsname}%
\let\auto@bib@innerbib\@empty
\bibitem [{\citenamefont {Cheng}(2015)}]{Cheng2015CharacterTheory}%
  \BibitemOpen
  \bibfield  {author} {\bibinfo {author} {\bibfnamefont {C.}~\bibnamefont {Cheng}},\ }\bibfield  {title} {\bibinfo {title} {A character theory for projective representations of finite groups},\ }\href {https://doi.org/10.1016/j.laa.2014.11.027} {\bibfield  {journal} {\bibinfo  {journal} {Linear Algebra Appl.}\ }\textbf {\bibinfo {volume} {469}},\ \bibinfo {pages} {230--242} (\bibinfo {year} {2015})}.\ \bibinfo {note} {References to individual claims follow the numbering of the more detailed preprint version at \url{http://maths.nju.edu.cn/~ccheng/Writings/preps-notes.pdf}}\BibitemShut {NoStop}%
\bibitem [{\citenamefont {Burnside}(2012)}]{Burnside1911}%
  \BibitemOpen
  \bibfield  {author} {\bibinfo {author} {\bibfnamefont {W.}~\bibnamefont {Burnside}},\ }\href {https://doi.org/10.1017/CBO9781139237253} {\emph {\bibinfo {title} {The Theory of Groups of Finite Order}}},\ \bibinfo {edition} {2nd}\ ed.\ (\bibinfo  {publisher} {Cambridge University Press},\ \bibinfo {address} {Cambridge},\ \bibinfo {year} {2012})\ \bibinfo {note} {originally published in 1911}\BibitemShut {NoStop}%
\bibitem [{\citenamefont {Dixon}(1967)}]{Dixon1967}%
  \BibitemOpen
  \bibfield  {author} {\bibinfo {author} {\bibfnamefont {J.~D.}\ \bibnamefont {Dixon}},\ }\bibfield  {title} {\bibinfo {title} {High speed computation of group characters},\ }\href {https://doi.org/10.1007/BF02162877} {\bibfield  {journal} {\bibinfo  {journal} {Numer. Math.}\ }\textbf {\bibinfo {volume} {10}},\ \bibinfo {pages} {446--450} (\bibinfo {year} {1967})}\BibitemShut {NoStop}%
\bibitem [{\citenamefont {Dixon}(1970)}]{Dixon1970}%
  \BibitemOpen
  \bibfield  {author} {\bibinfo {author} {\bibfnamefont {J.~D.}\ \bibnamefont {Dixon}},\ }\bibfield  {title} {\bibinfo {title} {Computing irreducible representations of groups},\ }\href {https://doi.org/10.1090/S0025-5718-1970-0280611-6} {\bibfield  {journal} {\bibinfo  {journal} {Math. Comp.}\ }\textbf {\bibinfo {volume} {24}},\ \bibinfo {pages} {707--712} (\bibinfo {year} {1970})}\BibitemShut {NoStop}%
\bibitem [{\citenamefont {Schneider}(1990)}]{Schneider1990}%
  \BibitemOpen
  \bibfield  {author} {\bibinfo {author} {\bibfnamefont {G.~J.}\ \bibnamefont {Schneider}},\ }\bibfield  {title} {\bibinfo {title} {Dixon's character table algorithm revisited},\ }\href {https://doi.org/https://doi.org/10.1016/S0747-7171(08)80077-6} {\bibfield  {journal} {\bibinfo  {journal} {J. Symb. Comput.}\ }\textbf {\bibinfo {volume} {9}},\ \bibinfo {pages} {601--606} (\bibinfo {year} {1990})}\BibitemShut {NoStop}%
\bibitem [{\citenamefont {Unger}(2006)}]{Unger2006}%
  \BibitemOpen
  \bibfield  {author} {\bibinfo {author} {\bibfnamefont {W.}~\bibnamefont {Unger}},\ }\bibfield  {title} {\bibinfo {title} {Computing the character table of a finite group},\ }\href {https://doi.org/https://doi.org/10.1016/j.jsc.2006.04.002} {\bibfield  {journal} {\bibinfo  {journal} {J. Symb. Comput.}\ }\textbf {\bibinfo {volume} {41}},\ \bibinfo {pages} {847--862} (\bibinfo {year} {2006})}\BibitemShut {NoStop}%
\end{thebibliography}
\end{document}